\title{On the volume of sums of  Anti-blocking bodies}
\author{Auttawich Manui, Cheikh Saliou Ndiaye 
 and Artem Zvavitch}
 \thanks{Authors are supported in part by the U.S. National Science Foundation Grant DMS-1101636,
the United States - Israel Binational Science Foundation (BSF) Grant 2018115, and the Thailand Development and Promotion of Science and Technology talent project (DPST) } 
\subjclass[2020]{Primary: 52A20, 52A21; Secondary: } 
\keywords{Volume, Anti-blocking bodies, Plunnecke-Ruza inequality,  $p-$Roger-Shephard inequality}
\date{}
\begin{document}

\newcommand{\vol}{\mathrm{vol}}
\newcommand{\R}{{\mathbb R}}
\newcommand{\G}[1][n]{\mathcal{G}^c_{#1}}
\NewDocumentCommand{\GK}{ O{n} O{k}}{\mathcal{G}^c_{#1,#2}}
\newcommand{\GM}[1][i]{\Gamma \left(1+\frac{#1}{q}\right)}
\

\newtheorem{theorem}{Theorem}
\newtheorem{lemma}[theorem]{Lemma}
\newtheorem{claim}[theorem]{Claim}
\newtheorem{remark}[theorem]{Remark}
\newtheorem{fact}[theorem]{Fact}
\newtheorem{corollary}[theorem]{Corollary}
\newtheorem{defi}[theorem]{Definition}
\newtheorem{prop}[theorem]{Proposition}
\newtheorem{ques}[theorem]{Question}
\newcommand{\Span}{\operatorname{span}}
\newcommand{\Int}{\operatorname{int}}
\newcommand{\Conv}{\operatorname{conv}}

\begin{abstract} 
We study inequalities on the volume of Minkowski sum in the class of anti-blocking bodies. We prove   analogues of Pl\"unnecke-Ruzsa type inequality  and  V. Milman inequality on  the concavity of the ratio of volumes  of bodies and their projections.  
We also study Firey $L_p-$sums of anti-blocking bodies and prove Pl\"unnecke-Ruzsa type inequality; V. Milman inequality  and Roger-Shephard inequality.  The sharp constants are provided in all of those inequalities, for the class of anti-blocking bodies. Finally, we extend our results to the case of unconditional product measures with decreasing density.
\end{abstract}
\maketitle

\section{Introduction}
The notion of sumset is a standard operation in the field of additive combinatorics which makes sense for any sets in a commutative group, i.e., for any sets $A,B$ in a commutative group, their sumset  $A+B$, is defined as the set of all possible sums of elements from $A,B$, i.e., $A+B =\{a +b : a \in A, b\in B\}.$ One useful and elegant inequality in additive combinatorics that provides bounds on the cardinality of sumsets of finite sets is the {\it Pl\"unnecke-Ruzsa inequality} \cite{R-70,TV-06}: For any finite sets $A, B_1, \ldots, B_m$ in a commutative group, there exists a set $X \subset A$,
\begin{equation}\label{eq:PR}
    \#(A)^m \#(X + B_1 + \ldots +B_m) \leq \#(X) \prod_{i=1}^m\#(A+B_i).
\end{equation}
Here, $\#(A)$ denotes the cardinality of the set $A$. This inequality was introduced by Pl\"unnecke \cite{P-70} and further improved by Ruzsa \cite{R-97} to the case of compact sets on a locally compact commutative group.

The comparison between inequalities in Information Theory and Convex Geometry have been thoroughly investigated. Bobkov and Madiman \cite{BM-12} introduced an inequality that is inspired by (\ref{eq:PR}): For any convex bodies $A,B,C\subset\R^n$, one has
\[
    \left| A \right| \left| A+B+C \right| \leq 3^n \left| A+B \right| \left| A+C \right|.
\]
The constant $3^n$ is known to not be sharp. Fradelizi, Madiman, and the third named author \cite{FMZ-24} explored this further, determining the lower bound and upper bound of the best possible constant $c_n$,
\begin{equation} \label{c_Plun-Ruz}
    c_n =\sup \frac{\left| A \right| \left| A+B+C \right| }{\left| A+B \right| \left| A+C \right|},
\end{equation}
where the supremum is taken over all convex bodies $A,B,C$ in $\R^n$: 
$$
    (4/3+o(1))^n \leq c_n \leq \varphi^n,
$$ 
where $\varphi = \frac{1+\sqrt5}{2}$ is the golden ratio.

The second named author generalized results from \cite{BM-12} to the case of $m-$convex bodies. More precisely,  let $c_{n,m}$ be the smallest possible constant such that 
\begin{equation} \label{m-Plu-Ruz}
    \left| A \right|^{m-1} \left| A+B_1+\ldots+B_m \right| \leq c_{n,m} \prod_{i=1}^m \left| A+B_i \right|,
\end{equation} 
for any convex bodies $A,B_1, \ldots, B_m$ in $\R^n$. Then,
\[
    \frac{e}{\sqrt{2 \pi n (e-1)}} \left( \frac{4}{3} \right)^n+1/2 \leq c_{n,m} < 2^n.
\]
Moreover, sharper lower and upper bounds depending on $m$ were established in \cite{N-24}.

Ruzsa distance was introduced in \cite{R-96} as $$d(A,B) = \log \frac{\#(A-B)}{\sqrt{\#(A) \#(B)}}, $$ where $A,B$ are finite subsets of a commutative group (see also \cite[Chapter~2.33]{TV-06}). Ruzsa distance is not a metric, however it satisfies the triangle inequality: For any finite sets $A,B,C$ in a commutative group, 
$$
    \#(A) \# (B - C) \leq \#(B - A) \#(A - C ).
$$ 
An analog of this inequality to the case of volume of convex bodies was presented in \cite{FMZ-24}: For any convex bodies $A,B,C$, one has 
\[
    \left| A \right| \left| A+B+C \right| \leq \frac{1}{2^n} \binom{2n}{n}c_n \left| A-B\right| \left| A-C \right|,
\]
where $c_n$ is defined in \eqref{c_Plun-Ruz}. 

We note that the lower bounds for $c_n$ and $c_{n,m}$ in \cite{FMZ-24, N-24} were obtained by considering a convex body which has many symmetries. This inspired us to explore Ruzsa type inequalities for the class of anti-blocking bodies. 

We say that a convex body $K$ is an {\it anti-blocking body} if $K \subset \R^n_+$ and orthogonal projection on every coordinate subspace equals to its section by the same subspace (see Section \ref{B_anti-blocking bodies} for more details). Many crucial inequalities were proved for the class of  anti-blocking bodies see \cite{ASS-23,S-24}. In Corollary \ref{corant}, we prove a sharp analog of \eqref{m-Plu-Ruz}: For any anti-blocking bodies $A,B_1,\ldots, B_m\subset \R^n_+$, one has 
\begin{equation} \label{m-Plu-Ruz-diff}
    \left| A \right|^{m-1} \left| A+B_1+\ldots+B_m \right| \leq \zeta_{n,m} \prod_{i=1}^m \left| A-B_i \right|,
\end{equation}
where $\zeta_{n,m} $ is defined as follows: For any integers $0 \leq \alpha_i \leq n$, let
\begin{equation} \label{s-con_proj}
    d_{n,m}(\alpha_1,\ldots,\alpha_m) :=\frac{\prod_{i=1}^m\binom{\alpha_i}{N}}{\binom{n}{N}^{m-1}} ,
\end{equation}
where $N= \sum_{i=1}^m \alpha_i -n(m-1)\geq 0 $ and 
\begin{equation*} \label{zeta-constant}
    \zeta_{n,m} := \max_{0\leq \alpha_i \leq n} d_{n,m}(\alpha_1,\ldots,\alpha_m).
\end{equation*}


The next observation of this paper is related to the conjecture of the concavity of the ratio of volumes to volumes of projections: For any convex bodies $A,B \subset \R^n$,
\begin{equation} \label{r-vol-proj}
    \frac{\left| A\right|}{\left| P_E A \right|} +  \frac{\left| B\right|}{\left| P_E B \right|} \leq \frac{\left| A + B\right|}{\left| P_E(A + B) \right|},
\end{equation}
where $E$ is a hyperplane of $\R^n$ and $P_E $ is the orthogonal projection onto $E$. The conjecture \eqref{r-vol-proj} is connected to a  question of V. Milman \cite{Schneider},  whether there exists a version of the Bergstrom inequality for convex bodies. It is also connected to Dembo, Cover, Thomas conjecture \cite{DCT-91} which motivated by Fisher information inequality. The conjecture \eqref{r-vol-proj} is known to be true in $\R^2$ (see, for example, \cite{FMMA-24}), but it is false in general starting from dimension $3$, even for the weaker version,
\begin{equation} \label{r-w-vol-proj}
    \frac{\left| A\right|}{\left| P_E A \right|} \leq \frac{\left| A + B\right|}{\left| P_E(A + B) \right|}.
\end{equation}
The counterexample was presented in \cite{FGM03}. It has been demonstrated in \cite{FMMA-24} that \eqref{r-w-vol-proj} is valid for the class of zonoids when $n = 3$ (the case of the zonoids in higher dimensions is still open). We refer to \cite{FMMA-24, FMZ-24}  for detailed discussion of those conjectures and their equivalent versions.

We study an analog of conjecture (\ref{r-vol-proj}) for anti-blocking bodies in the view of Ruzsa's triangle inequality by replacing the Minkowski sum with the difference.  Let $r_{n,i}$ be the best possible constant for the following inequality:
\begin{equation} \label{conj_ratioA}
    \frac{\left| A\right|}{\left| P_E A \right|} +  \frac{\left| B\right|}{\left| P_E B \right|} 
    \leq 
    r_{n,i} \frac{\left| A - B\right|}{\left| P_E(A - B) \right|},
\end{equation}
for any anti-blocking bodies $A,B \subset \R^n_+$ and for any $i-$dimensional coordinate subspace $E$. We prove in Theorem \ref{L1-sum} that  
\begin{equation*} \label{r_constant}
    r_{n,i} = \max_{0 \leq j \leq n} d_{n,2} (i,j),
\end{equation*}
where $d_{n,2} (i,j) $ is defined in \eqref{s-con_proj}.

Inspired by \cite{FMMA-24}, we extend   \eqref{m-Plu-Ruz-diff} and \eqref{conj_ratioA} to a more general operation on the pair of convex bodies introduced by Firey \cite{F-62}, called the {\it $L_p-$sums}.

An upper bound for the volume of difference body $K-K$ is given
by the Rogers-Shephard inequality, originally proven in \cite{RS-57} (see also \cite{Schneider}). The $L_p$ sum analog of this inequality was proved in $\R^2$ by Bianchini and Colesanti \cite{BC-08} and it is an open question to prove the sharp $L_p$-Rogers-Shephard inequality in $\R^n$, $n\ge 3$.  We prove this inequality in the case of locally anti-blocking bodies, where a {\it locally anti-blocking body} can be viewed as a body which is anti-blocking body in each orthant. Let $\kappa_{n,q}$ be the best possible constant for the following inequality:
\begin{equation*}
    |K \oplus_p -K| \leq \kappa_{n,q} |K|,
\end{equation*}
for any locally anti-blocking body $K\subset \R^n$ where $\frac{1}{p} + \frac{1}{q} = 1$ and $p \geq 1$. We prove in Lemma \ref{RS-l-anti} that 
\[
    \kappa_{n,q} = \sum_{i=0}^n \binom{n/q}{i/q}^{-1}  \binom{n}{i}^2,
\]
where the binomial coefficient is defined by the Gamma function: 
    \[
        \binom{x}{y}:= \frac{\Gamma (x+1) }{\Gamma(y+1)\Gamma(x-y+1)}.
    \]

The paper is organized as follows. Section 2 is dedicated to the background materials. In section 3, we present an improvement of Local Loomis-Whitney inequality, of  Bollob\'as and Thomason  \cite{BA-95}, for a $\lambda-$uniform cover. Following this, in section 4, we present the best possible constants of a Pl\"unnecke-Ruzsa type inequality and the concavity of the ratio of volumes to volumes of projections for anti-blocking bodies. Moving on to section 5, we present our results with $L_p-$sums, and also present a sharp constant of a $L_p$-Roger-Shephard inequality for a locally anti-blocking body. Section 6 delves deeper into inequalities when considering volumes with respect to an $s-$concave measure. Finally, Section 7 is an Appendix where we collect optimizations of constants and numerous technical calculations, used in the paper.

\noindent{\bf Acknowledgments.}
We are grateful to Shay Sadovsky for numerous  useful comments  greatly improved the presentation of this paper.

\section{Background}

\subsection{Preliminaries}
We refer to \cite{AGM1,AGM2,Schneider} for basic definitions and facts used in this paper. We write $\langle x,y \rangle $ for {\it the inner product} of $x,y \in \R^n$ and $e_1,\ldots ,e_n$ for the {\it standard orthonormal basis} of $\R^n$. A {\it convex body} is defined as a convex, compact non-empty set. 
If a convex body $K$ containing the origin, then, for any $x \in \mathbb{R}^n$, we define the {\it Minkowski functional} by
$$
\|x\|_K:=\inf \{t>0 : x \in tK\}.
$$
We note that if $0\in \partial K$, then the  above infimum may be taken over the empty set, in this case we set such infimum to be infinity. 
 We also define a  {\it polar body} of $K$ by
\[
    K^\circ := \{y \in \R^n : \langle x, y\rangle \leq 1 \text{ for all } x \in K\}.
\]
For a convex body $K$, we define the {\it support function} of $K$ by 
$
    h_K (x) := \underset{{y\in K}}{\max} \langle x,y \rangle.
$
Note that $ h_K (x) = \| x\|_{K^\circ}$ if $0 \in K$.

We write $ |K| $ for the $m-$dimensional Lebesgue measure (volume) of a measurable set $K$ where $m$ is a dimension of the minimal affine space containing $K$. 
For any sets $K$ and $L$, the sumset $K+L = \{ x+y : x\in K, y \in L \}$ is called {\it Minkowski sum} of $K$ and $L$.



\subsection{Anti-blocking bodies} \label{B_anti-blocking bodies}
A subspace $E$ is a {\it coordinate subspace} if $E$ is spanned by $\{e_i\}_{i\in I}$, where $I \subset [n]$, (where we use the notation $[n]=\{1,2,\dots, n\}$).
Denote by \(\G\) the collection of coordinate subspaces of \(\R^n\) and \(\GK \subset \G\) the collection of coordinate subspace of dimension \(k\).

A convex body \( K \subset \R^n_+\) is called {\it anti-blocking} if for any \( E \in \G \), \(P_E(K) = K \cap E\). Note that the collection of anti-blocking bodies are closed under Minkowski sum.


The following decomposition of anti-blocking bodies was proved in \cite{CFS-17, ASS-23}: for any pair of anti-blocking bodies $A$ and $B$, one has  
\begin{equation}\label{sum_ort}
    A - B = \bigcup_{E\in \G} \left(P_E A  -P_{E^\bot }B \right).
\end{equation}
The above union is almost disjoint with respect to Lebesgue measure which immediately gives a very useful formula: 
\begin{equation} \label{antproj}
    \left| A - B \right|= \sum_{E\in \G} \left| P_E A \right| \left| P_{E^\bot }B \right|.
\end{equation}
It is worth mentioning that we agree that $\left| P_E K \right| = 1$ when $E = \{0\}$. Note that it is clear that \eqref{antproj} holds for any measure that is absolutely continuous with respect to Lebesgue measure.

It was proved in  \cite{ASS-23} that for  any pair of anti-blocking bodies $A$ and $B$, one has
\begin{equation} \label{antproj2}
    \left| A+B \right| \leq   \left| A-B \right|.
\end{equation}
 The proof of (\ref{antproj2}) relies on the Reverse Kleitman inequality \cite{BLR-89}. The inequality (\ref{antproj2}) can be also proved directly using a symmetrization argument, which we reproduce for the case of $L^p-$sums in Lemma \ref{RK-lp}, below. 

Consider  \( \delta \in \{-1,1\}^n \) and  a set \( S \subset \R^n \), we define 
$\delta S$ to be a set such that  \( \delta S := \{ (\delta_1x_1,\ldots,\delta_n x_n) : x \in S \} \). 
A convex body $K$ is {\it locally anti-blocking}, if for any $\delta \in \{-1,1\}^n$, $\delta K \cap \R^n_+$ is anti-blocking. A {\it 1-unconditional  convex body} K is defined by $\delta K=K$, for all \( \delta \in \{-1,1\}^n \). We observe that any 1-unconditional convex body is a locally anti-blocking body. We refer to \cite{ASS-23, S-24} for many more results and inequalities for (locally) anti-blocking bodies. 
\subsection{ Inequalities on the volume of projections} \label{LLW-section}
A classical inequality relating volume of a convex body $K \subset \R^n$ and volume of its projections is the {\it Loomis-Whitney inequality} \cite{LW-49}:
\begin{equation} \label{LW-ori}
    \left| K \right|^{n-1} \leq \prod_{i=1}^n \left| P_{e_i^\perp} K \right|.
\end{equation}
The inequality \eqref{LW-ori} was generalized by Bollob\'as and Thomason in \cite{BA-95}. For every $ \tau \subset [n]$, we define $E_\tau := \Span \{e_i : i\in \tau\}^\perp $. 
For $\sigma \subset [n]$, the sets $\sigma_1,\ldots,\sigma_m \subset \sigma$ form a {\it $\lambda-$uniform cover of $\sigma$}, if every $ j \in \sigma$, it belongs to exactly $\lambda$ of the sets $\sigma_i$. Note that $E_{\sigma}\subset E_{\sigma_i}$ for any $i\in[m]$.

It was proved in \cite{BA-95} that for any compact set $K \subset \R^n$ and for any $\lambda-$uniform cover $(\sigma_1,\ldots,\sigma_m)$ of $[n]$, one has
\begin{equation} \label{BT-int}
    \left| K \right|^{\lambda} \leq \prod_{i=1}^m \left| P_{E_{\sigma_i}^\perp} K \right|.
\end{equation}
The local version of \eqref{BT-int} is a particular case of local versions of Alexandrov-Fenchel's inequality.  The first inequality of such type was proved by Fenchel (see \cite{Fen36} and also \cite{Schneider}) and further generalized in \cite{FGM03, AFO14, SZ16, Xia19}.

A local version of (\ref{BT-int}) was observed by Brazitikos, Giannopoulos and Liakopoulos in \cite{BGL-18}: For any convex body $K \subset \R^n$ and for any $\lambda-$uniform cover $(\sigma_1,\ldots,\sigma_m)$ of $\sigma$, 
\begin{equation} \label{m-proj-BT}
    \left|K\right|^{m-\lambda} \left| P_{E_\sigma} K \right|^\lambda \leq \binom{n}{n-|\sigma|}^{\lambda-m}\binom{n-\frac{\lambda |\sigma|}{m}}{n-|\sigma|}^m \prod_{i=1}^m  \left| P_{E_{\sigma_i}} K \right|,
\end{equation}
where from now we will denote the cardinality of $\sigma$ by $|\sigma|$. 

Inequality (\ref{m-proj-BT}) was further improved by Alonso-Guti\'errez, Artstein-Avidan, Gonz\'alez Merino, Jim\'enez and Villa  in \cite{AAGJV-19}: For any convex body $K\subset \R^n$ and for any 1-uniform cover $(\sigma_1, \sigma_2 )$ of $\sigma$, 
\begin{equation}\label{projcontr}
    |K| |P_{E_\sigma}K|  \leq  \frac{\binom{i}{ i+j-n} \binom{j}{ i+j-n}}{\binom{n}{ i+j-n} }|P_{E_{\sigma_1}} K|\,|P_{E_{\sigma_2}} K|,
\end{equation}
where
  $i = \dim E_{\sigma_1} $, $j =\dim E_{\sigma_2}.$ 


Inequality (\ref{projcontr}) was used in \cite{FMZ-24} to compute the lower bound for the best constant $c_n$ defined in \eqref{c_Plun-Ruz}. 
In Section 3 of this paper, we extend inequality \eqref{projcontr} to the case of $\lambda-$uniform cover $(\sigma_1,\ldots,\sigma_m)$ of $\sigma$. 

Finally, we would like to remind a result of Rogers and Shephard  \cite{RS-58}, who established a sharp lower bound for the volume of a convex body $K \subset\R^n$ in terms of the volumes of its projection and a maximal section:
\begin{equation} \label{lb-RS}
    \left|P_H K\right| \max _{x_0 \in H} \left|K \cap\left(x_0+H^{\perp}\right)\right| \leq\binom{ n}{k} \operatorname{vol}(K),
\end{equation}
where $H$ is a $k-$dimensional subspace of $\R^n$.
\subsection{$s-$concave measure} 
A measure $\mu$ on $\R^n$ is {\it $s-$concave} if for any non-empty compact sets $A,B \subset \R^n$ and for any $t \in [0,1]$, one has 
\begin{equation*}\label{eq:sconc}
    \mu( tA +(1-t)B)^s \geq  t\mu(A)^s + (1-t)\mu(B)^s.
\end{equation*}
The classical Brunn-Minkowski inequality demonstrates that the Lebesgue measure is \(1/n\)-concave (See \cite{Schneider}), i.e.,
\begin{equation} \label{eq:BM}
    | tA +(1-t)B|^{1/n} \geq  t|A|^{1/n} + (1-t)|B|^{1/n}.
\end{equation}
We say that the Borel measure $\mu$ has {\it density}, if it has a locally integrable derivative, i.e.,
$$
    \frac{d \mu(x)}{dx} = \phi(x),
$$
where $\phi:\R^n \rightarrow \R$ and $\phi\in L^1_{loc} (\R^n)$. A measure $\mu$ is {\it unconditional with decreasing density}, if 
\begin{itemize}
    \item[1.] For any $x \in \R^n, \phi(\pm x_1, \ldots,\pm x_n) = \phi(x)$.
    \item[2.] For any $ i \in [n]$ and $x_1,\ldots,x_{i-1},x_{i+1},\ldots, x_n \in \R$, the function $$t \mapsto \phi(x_1,\ldots,x_{i-1},t,x_{i+1},\ldots, x_n)$$ is non-increasing on $[0,\infty)$.
\end{itemize}
Livshyts, Marsiglietti, Nayar and the third named author proved in  \cite{LMNZ-17} that
\begin{theorem} \label{uncon-prod-m}
    Any unconditional, product measure with decreasing density is $1/n$-concave on the class of 1-unconditional convex bodies.
\end{theorem}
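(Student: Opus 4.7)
The plan is to induct on the dimension $n$, using slicing by the last coordinate to reduce the $n$-dimensional inequality to a one-dimensional weighted Borell--Brascamp--Lieb (BBL) statement, which I would then resolve via a change of variables that exploits the monotonicity of $\phi_n$. For the base case $n=1$, a $1$-unconditional convex body is a symmetric interval $[-a,a]$, and the function $a \mapsto \mu_1([-a,a]) = 2\int_0^a \phi_1$ has non-increasing derivative $2\phi_1(a)$, hence is concave on $[0,\infty)$. Combined with $\lambda[-a,a]+(1-\lambda)[-b,b] = [-(\lambda a + (1-\lambda)b), \lambda a + (1-\lambda)b]$, this yields $1$-concavity of $\mu_1$ on symmetric intervals.

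\textit{Slicing reduction.} For the inductive step I assume the result in dimension $n-1$. Write $\widetilde\mu := \mu_1 \otimes \cdots \otimes \mu_{n-1}$; for $1$-unconditional convex $K,L \subset \R^n$ and $M := \lambda K + (1-\lambda) L$, denote the horizontal slices $K(t), L(t), M(t) \subset \R^{n-1}$ at level $x_n = t$. Each slice is a $1$-unconditional convex body in $\R^{n-1}$, and the convexity of $M$ gives $M(\lambda s + (1-\lambda)t) \supseteq \lambda K(s) + (1-\lambda)L(t)$. Applying the inductive hypothesis slice-by-slice, the functions $F(r) := \widetilde\mu(M(r))$, $G(s) := \widetilde\mu(K(s))$, $H(t) := \widetilde\mu(L(t))$ are even and non-increasing in $|\cdot|$, and satisfy
\[
    F(\lambda s + (1-\lambda)t)^{1/(n-1)} \geq \lambda G(s)^{1/(n-1)} + (1-\lambda) H(t)^{1/(n-1)}.
\]
Since $\mu = \widetilde\mu \otimes \mu_n$, Fubini gives $\mu(K) = \int G \phi_n$ and analogously for $L, M$, so the theorem reduces to
\[
    \Big(\int F\phi_n\Big)^{1/n} \geq \lambda \Big(\int G\phi_n\Big)^{1/n} + (1-\lambda) \Big(\int H\phi_n\Big)^{1/n}.
\]

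\textit{One-dimensional step via change of variables.} By symmetry I restrict integration to $[0,\infty)$. Since $\phi_n$ is non-increasing, its primitive $\Phi(r) := \int_0^r \phi_n$ is concave, and thus its inverse $\Phi^{-1}$ is convex. Substituting $u = \Phi(r)$ and setting $\widetilde F(u) := F(\Phi^{-1}(u))$ (similarly $\widetilde G, \widetilde H$), we have $\int_0^\infty F\phi_n = \int_0^{\Phi(\infty)} \widetilde F$. Convexity of $\Phi^{-1}$ combined with monotonicity of $F$ propagates the concavity hypothesis:
\[
    \widetilde F(\lambda u + (1-\lambda)v) \geq F\big(\lambda\Phi^{-1}(u) + (1-\lambda)\Phi^{-1}(v)\big) \geq \big(\lambda \widetilde G(u)^{1/(n-1)} + (1-\lambda)\widetilde H(v)^{1/(n-1)}\big)^{n-1}.
\]
The classical one-dimensional Borell--Brascamp--Lieb inequality with $p = 1/(n-1)$ and $d = 1$ (giving exponent $q = p/(1+p) = 1/n$) applied to $\widetilde F, \widetilde G, \widetilde H$ then delivers the $1/n$-concavity of $\int \widetilde F = \int F\phi_n$, closing the induction.

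\textit{Main obstacle.} The critical step is the one-dimensional weighted BBL: the density $\phi_n$ is not assumed log-concave, so standard Pr\'ekopa--Leindler-type tools do not apply directly. The key observation is that the substitution $u = \Phi(r)$ linearizes the weight, and the resulting nonlinearity $\Phi^{-1}$ is convex precisely because $\phi_n$ is non-increasing, which lets the $M_{1/(n-1)}$-concavity hypothesis pass through. A naive alternative --- writing $\phi_n$ as a superposition of indicators of symmetric intervals and invoking Brunn--Minkowski on each --- fails because, after extracting $n$-th roots, the Minkowski integral inequality points in the wrong direction. Minor technicalities (e.g.\ $\phi_n$ vanishing on a subinterval, making $\Phi^{-1}$ multi-valued) are absorbed by a standard approximation argument where $\phi_n$ is replaced by a strictly positive, smooth monotone truncation.
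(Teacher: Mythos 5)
The paper does not prove this statement; it quotes it from Livshyts, Marsiglietti, Nayar, and Zvavitch \cite{LMNZ-17}, so there is no in-paper argument to compare against. Your proposal is correct, and the mechanism is exactly right. The inductive slicing reduction is standard, and the critical one-dimensional step is handled cleanly: the substitution $u=\Phi(r)$ with $\Phi(r)=\int_0^r\phi_n$ has a convex inverse precisely because $\phi_n$ is non-increasing, and this, combined with the monotonicity in $|\cdot|$ of the slice-volume function $F$ (which follows from $1$-unconditionality and convexity of $M$, since $M(t)\supseteq M(s)$ whenever $|t|\le|s|$), lets the $M_{1/(n-1)}$-concavity hypothesis survive the change of variables; the unweighted one-dimensional Borell--Brascamp--Lieb inequality with $p=1/(n-1)$, hence $q=p/(1+p)=1/n$, then closes the induction. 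Two small points worth spelling out in a full write-up: (i) after extending $\widetilde F,\widetilde G,\widetilde H$ by zero to all of $\R$, the BBL hypothesis holds vacuously at any $(u,v)$ where $\widetilde G(u)=0$ or $\widetilde H(v)=0$, so empty slices need no separate treatment; and (ii) the reduction to the half-line is compensated by evenness of $F\phi_n$, $G\phi_n$, $H\phi_n$, so the resulting factor $2^{1/n}$ cancels on both sides. The architecture (induction on dimension, slicing, a one-dimensional lemma built on the decreasing-density hypothesis) is the same as in \cite{LMNZ-17}; your resolution of the 1D step via a monotone transport to the uniform weight followed by BBL is a tidy way to realize it.
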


\subsection{Berwald inequality}
The classical Berwald inequality was introduced in \cite{B-47, B-73}:
\begin{theorem}[Berwald inequality]
    Let $f $ be a non-negative, concave function supported on a convex body $K\subset \R^n$. Then, for any $-1<p\leq q < \infty$, 
    \begin{equation} \label{Berwald-ine}
        \left( \frac{\binom{n +p}{p}}{| K|} \int_{K} f (y) ^{p} dy \right)^{\frac{1}{p}} 
        \leq 
        \left( \frac{\binom{n+q}{q}}{|K|}\int_{K} f (y) ^{q} dy \right)^{\frac{1}{q}}.
    \end{equation}
\end{theorem}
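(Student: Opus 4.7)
The plan is to reduce Berwald's inequality to a one-dimensional comparison with an affine ``cone'' function---for which the inequality is a direct Beta-integral identity---and then control the general case via a crossing argument at the level of distribution functions.

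First I would pass to the distribution function $V(t) = |\{y \in K : f(y) \geq t\}|$, for $t \in [0, a]$ with $a = \sup_K f$. Since $f$ is concave, the super-level sets are convex, so Brunn--Minkowski \eqref{eq:BM} gives that $V^{1/n}$ is concave and non-increasing on $[0, a]$, with $V(0) = |K|$ and $V(a) = 0$. Setting $L = |K|^{1/n}$, define $\rho : [0, L] \to [0, a]$ to be the generalized inverse of $V^{1/n}$; as the inverse of a non-increasing concave function, $\rho$ is non-increasing and convex, with $\rho(0) = a$ and $\rho(L) = 0$. The substitution $u = V(t)^{1/n}$ in the layer-cake representation of $\int_K f^p dy$ yields
\[
    \int_K f(y)^p\, dy = \int_0^L \rho(u)^p\, n u^{n-1}\, du, \qquad p > -1.
\]
Hence Berwald reduces to showing that $p \mapsto \bigl(\binom{n+p}{p} L^{-n} \int_0^L \rho(u)^p\, n u^{n-1}\, du\bigr)^{1/p}$ is non-decreasing on $(-1, \infty)$ for every non-negative, non-increasing, convex $\rho$ on $[0, L]$ with $\rho(L) = 0$.

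The affine case $\rho^*(u) = A(1 - u/L)$ supplies the equality instance: a direct Beta-integral computation gives $\int_0^L \rho^{*p}(u)\, n u^{n-1}\, du = A^p L^n / \binom{n+p}{p}$, so the Berwald quantity for $\rho^*$ equals $A$ for every $p > -1$. Fixing $q \in (-1, \infty)$, I would choose $A$ so that $\rho^*(u) = A(1 - u/L)$ has the same $L^q$-norm as $\rho$ with respect to the probability measure $d\nu = (n u^{n-1}/L^n)\, du$; it then suffices to prove $\|\rho\|_{L^p(\nu)} \leq \|\rho^*\|_{L^p(\nu)}$ for all $p < q$, after which Berwald follows from the affine equality case. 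The difference $\phi = \rho - \rho^*$ is convex on $[0, L]$ with $\phi(L) = 0$, so the sub-level set $\{\phi \leq 0\}$ is an interval containing $L$; combined with the $L^q$-equality of norms, this forces that either $\rho \equiv \rho^*$, or there exists $u_0 \in (0, L)$ such that $\rho \geq \rho^*$ on $[0, u_0]$ and $\rho \leq \rho^*$ on $[u_0, L]$.

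Setting $c = \rho(u_0) = \rho^*(u_0)$ and $J(t) := \nu(\rho \geq t) - \nu(\rho^* \geq t)$, the crossing yields $J \leq 0$ on $[0, c]$ and $J \geq 0$ on $[c, a]$. By layer cake,
\[
    \int \rho^p\, d\nu - \int \rho^{*p}\, d\nu = p \int_0^a t^{p-1} J(t)\, dt,
\]
and the equality of $L^q$-norms reads $\int_0^a t^{q-1} J(t)\, dt = 0$. Subtracting $c^{p-q}$ times this identity, one obtains
\[
    \int_0^a t^{p-1} J(t)\, dt = \int_0^a t^{p-1}\bigl(1 - c^{p-q} t^{q-p}\bigr) J(t)\, dt,
\]
where the factor $1 - c^{p-q} t^{q-p}$ is positive on $[0, c)$ and negative on $(c, a]$ (since $q - p > 0$)---exactly opposite to the sign of $J$---making the integrand pointwise $\leq 0$. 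Tracking the sign of the overall $p$-factor (and using that $x \mapsto x^{1/p}$ is decreasing for $p < 0$), this yields $\|\rho\|_{L^p(\nu)} \leq \|\rho^*\|_{L^p(\nu)}$ in all cases, completing the proof. The main technical obstacles are the convexity/crossing trichotomy for $\phi$ and making the final sign argument work uniformly across the full range $p \in (-1, \infty)$, with the boundary cases $p = 0$ or $q = 0$---where the geometric mean appears---handled as limits.
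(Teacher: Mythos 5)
The paper does not prove this theorem; it is cited from Berwald and Borell \cite{B-47, B-73}, so there is no internal proof to compare against. Your overall strategy --- reducing to the distribution function via Brunn--Minkowski, substituting to get a one-dimensional moment problem for the inverse function $\rho$, and comparing with an affine ``cone'' via a single-crossing argument --- is indeed the classical route. However, there is a genuine error in the middle of the argument.

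You assert that $\rho$, the inverse of the decreasing concave function $V^{1/n}$, is convex. It is not: the inverse of a decreasing concave function is decreasing \emph{concave}. Indeed, for $g$ decreasing with $g''\le 0$, one has $(g^{-1})'' = -g''(g^{-1})/g'(g^{-1})^3 \le 0$ because $g'^3<0$; concretely, $g(t)=\sqrt{1-t}$ is decreasing concave and $g^{-1}(u)=1-u^2$ is concave, not convex. With $\rho$ concave, $\phi=\rho-\rho^*$ is concave, so the set $\{\phi\ge 0\}$ (not $\{\phi\le 0\}$) is the interval containing $L$; the crossing pattern is reversed ($\rho\le\rho^*$ near $0$, $\rho\ge\rho^*$ near $L$), the signs of $J$ flip on $[0,c)$ and $(c,\cdot)$, and the integrand $t^{p-1}\bigl(1-c^{p-q}t^{q-p}\bigr)J(t)$ is pointwise $\ge 0$ rather than $\le 0$. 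Tracking this through the final step yields $\|\rho\|_{L^p(\nu)}\ge\|\rho^*\|_{L^p(\nu)}$, which gives the opposite inequality to the one you set out to prove.

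What a correct version of your argument establishes is in fact the standard form of Berwald's inequality: for $-1<p\le q<\infty$, the quantity $\bigl(\binom{n+p}{p}|K|^{-1}\int_K f^p\bigr)^{1/p}$ is \emph{non-increasing} in $p$, so the displayed inequality \eqref{Berwald-ine} should read ``$\ge$'', not ``$\le$''. A quick sanity check confirms this: for $f\equiv 1$ one gets $\binom{n+p}{p}^{1/p}$, which decreases in $p$. This is also the direction in which the paper actually uses \eqref{Berwald-ine} in the proof of Theorem \ref{LM-uni-cov}, where the exponents are $p=|\sigma|$ on the left and $q=|\sigma|-|\sigma_i|<p$ on the right. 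So the paper's statement of the theorem carries a sign typo, and your convexity error --- by reversing the crossing --- happens to ``prove'' the mis-stated version; a corrected proof requires replacing ``convex'' by ``concave'' for $\rho$, which then propagates through the crossing and sign arguments to yield the correctly-oriented inequality.
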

Recently, Langharst and Putterman \cite{LP-24} extended the inequality \eqref{Berwald-ine} to the case of $s-$concave measures. 
\begin{theorem}[Berwald inequality for $s-$concave measures]
    Fix $s>0$ and let $f $ be a non-negative, concave function supported on a convex body $K\subset \R^n$. Let $\mu$ be a finite, Borel $s-$concave measure supported on $K$.
    Then, for any $-1<p\leq q < \infty$, 
    \begin{equation} \label{Berwald-ine-s}
        \left( \frac{\binom{\frac{1}{s} +p}{p}}{| K|} \int_{K} f (y) ^{p} d\mu(y) \right)^{\frac{1}{p}} 
        \leq 
        \left( \frac{\binom{\frac{1}{s}+q}{q}}{|K|}\int_{K} f (y) ^{q} d\mu(y) \right)^{\frac{1}{q}}.
    \end{equation}
\end{theorem}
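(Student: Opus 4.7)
The plan is to reduce \eqref{Berwald-ine-s} to a one-dimensional moment comparison by analyzing the distribution function of $f$ with respect to $\mu$, mimicking the classical proof of the Berwald inequality for Lebesgue measure. Set $M=\max_K f$ and
$$F(t)=\mu\bigl(\{x\in K : f(x)\ge t\}\bigr),\qquad t\in[0,M].$$
The structural fact driving everything is that $F(t)^{s}$ is concave and non-increasing on $[0,M]$. Indeed, concavity of $f$ gives the superlevel-set inclusion
$$\{f\ge (1-\lambda)a+\lambda b\}\supseteq (1-\lambda)\{f\ge a\}+\lambda\{f\ge b\},$$
and applying $\mu$ together with $s$-concavity yields
$$F\bigl((1-\lambda)a+\lambda b\bigr)^{s}\ge (1-\lambda)\,F(a)^{s}+\lambda\,F(b)^{s}.$$

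Next I would use layer-cake to rewrite both sides of \eqref{Berwald-ine-s} purely in terms of $F$: for $p>0$,
$$\int_K f^{p}\,d\mu \;=\; p\int_0^M t^{p-1}F(t)\,dt,$$
with the complementary identity $\int_K(f^{p}-M^{p})\,d\mu=|p|\int_0^M t^{p-1}(\mu(K)-F(t))\,dt$ for the range $-1<p\le 0$. Once this reduction is made, the inequality becomes a statement about one-dimensional non-increasing functions $F$ whose $s$-th power is concave.

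I would then compare $F$ with the extremal affine-in-$F^{s}$ profile
$$G_{0}(t)=(A-Bt)_+^{1/s},$$
choosing the constants $A,B$ so that $G_{0}$ matches the $L^{p}$-moment of $F$ in the representation above (and so that $G_{0}$ arises as the distribution function of an affine concave function on a one-dimensional interval under an appropriate power-law measure, which is precisely the equality case of \eqref{Berwald-ine-s}). Since $F^{s}$ is concave while $G_{0}^{s}$ is affine, the difference $F-G_{0}$ changes sign at most twice on $[0,M]$; combined with the equality of $p$-th moments, a standard Hardy--Littlewood--P\'olya sign-chase forces
$$\int_0^\infty t^{q-1}F(t)\,dt \;\le\; \int_0^\infty t^{q-1}G_{0}(t)\,dt,\qquad q>p.$$
Finally, the moments of the extremal $G_{0}$ are explicit Beta integrals,
$$\int_0^{A/B} t^{p-1}(A-Bt)^{1/s}\,dt=\frac{A^{1/s+p}}{B^{p}}\cdot\frac{\Gamma(p)\,\Gamma(1/s+1)}{\Gamma(1/s+p+1)},$$
and after isolating $A,B$ and rearranging, the prefactors $\binom{1/s+p}{p}$ on the left and $\binom{1/s+q}{q}$ on the right are exactly what convert the moment inequality into \eqref{Berwald-ine-s}.

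The principal obstacle is the crossing-point step: while the two-sign-changes argument is classical, one must execute it carefully here because $F^{s}$ is in general only upper-semicontinuous, and because for $-1<p\le 0$ the weight $t^{p-1}$ carries an integrable singularity at the origin that needs separate bookkeeping. A secondary technicality is the degenerate case in which $f$ attains its maximum on a set of positive $\mu$-measure (so $F(M)>0$); this is handled by a short limiting argument after truncating $F$ near $M$.
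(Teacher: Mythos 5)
The paper does not prove this theorem; it is quoted verbatim from Langharst--Putterman \cite{LP-24} and invoked as a black box in the proof of the $s$-concave analogue of Theorem \ref{LM-uni-cov}. So there is no in-paper argument to compare against. Your outline is exactly the Borell/Berwald distribution-function scheme one expects here and is almost certainly the route taken in \cite{LP-24}: concavity of $F(t)^s$ via the superlevel-set inclusion and $s$-concavity of $\mu$, layer cake to reduce to one-dimensional moments, and a crossing comparison with the affine-in-$F^s$ extremal profile $G_0=(A-Bt)_+^{1/s}$, whose moments are Beta integrals.

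Two things you should catch before declaring victory. First, carry your own sign-chase to the end: with the $p$-moments of $F$ and $G_0$ matched and $F-G_0$ changing sign once from $+$ to $-$, you correctly conclude $\int t^{q-1}F\le\int t^{q-1}G_0$ for $q>p$, which after inserting the Beta constants gives
\[
\Bigl(\tfrac{\binom{1/s+q}{q}}{\mu(K)}\int_K f^q\,d\mu\Bigr)^{1/q}\ \le\ \Bigl(\tfrac{\binom{1/s+p}{p}}{\mu(K)}\int_K f^p\,d\mu\Bigr)^{1/p},\qquad -1<p\le q.
\]
That is the \emph{reverse} of the display \eqref{Berwald-ine-s}. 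The display as printed cannot be right: for $f\equiv 1$ on $K=[0,1]$ with Lebesgue measure ($s=1$), \eqref{Berwald-ine-s} reads $(1+p)^{1/p}\le(1+q)^{1/q}$, but $(1+r)^{1/r}$ is strictly decreasing on $(-1,\infty)$. The direction in which the paper actually uses Berwald (proof of Theorem \ref{LM-uni-cov}, where the larger exponent $|\sigma|$ sits on the smaller side) confirms this. Second, your layer-cake representation naturally produces $F(0)=\mu(K)$ in the normalizing slot, not $|K|$; these coincide only in the Lebesgue case, so the statement should carry $\mu(K)$ in place of $|K|$. Neither of these is an error in your reasoning --- they are typos in the quoted statement --- but a blind proof that claims to "convert the moment inequality into \eqref{Berwald-ine-s}" without noticing that it lands on the opposite inequality is not yet a proof.

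On the crossing step itself, your main-obstacle paragraph is the right instinct. Note that pinning only the $p$-moment leaves the sign pattern of $F^s-G_0^s$ potentially $+,-,+$ when $A/B<M$ (since $G_0^s$ is affine only where positive and constant zero thereafter). You need either to normalize $G_0(0)=F(0)$ and truncate near $M$ as you suggest, or, more robustly, to pass to the change of variable $u=1-F(t)^s/F(0)^s$ (so $\phi:=$ the inverse is concave with $\phi(0)=0$) and run the single-crossing argument for $\phi(u)$ against the linear function $u$ with the weight $(1-u)^{1/s-1}$; this removes the truncation bookkeeping and the $-1<p\le 0$ singularity is absorbed into the Beta integral.
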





\section{Local Loomis-Whitney inequality}

We  remind that  $E_\tau = \Span \{e_i : i\in \tau\}^\perp $, for every $ \tau \subset [n]$ and for $\sigma \subset [n]$, the sets $\sigma_1,\ldots,\sigma_m \subset \sigma$ form a  {\it $\lambda-$uniform cover of $\sigma$}, if every $ j \in \sigma$, it belongs to exactly $\lambda$ of the sets $\sigma_i$.

The Theorem \ref{LM-uni-cov} gives a slight improvement of  \eqref{m-proj-BT}, so that the multiplicative constant in  \eqref{m-proj-BT} depends on the cover and is sharp. The case $|\sigma_1| = \ldots = |\sigma_m|$ of Theorem \ref{LM-uni-cov} is included in \eqref{m-proj-BT}.

\begin{theorem} \label{LM-uni-cov}
    Let $\sigma \subset [n]$, $m \geq \lambda \geq 1$ and let $(\sigma_1, \ldots, \sigma_m)$ be $\lambda-$uniform cover of $\sigma$. For any convex body $K \subset \R^n$, one has
    \begin{equation} \label{L-LM-unif}
        \left| K \right|^{m-\lambda} \left|P_{E_\sigma} K \right|^\lambda 
        \leq 
        \frac{\prod_{i=1}^m \binom{n-|\sigma_i|}{n-|\sigma|} }{\binom{n}{n-|\sigma|}^{m-\lambda}} \prod_{i=1}^m \left| P_{E_{\sigma_i}} K \right|.
    \end{equation}
\end{theorem}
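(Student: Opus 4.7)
The plan is to prove \eqref{L-LM-unif} by slicing $K$ along $E_\sigma$, applying Bollob\'as-Thomason fiberwise, and then integrating with Berwald's inequality while tracking the individual $|\sigma_i|$ rather than collapsing them to the average $\lambda|\sigma|/m$ used in \eqref{m-proj-BT}. Write $k=|\sigma|$, $k_i=|\sigma_i|$, $L_\sigma:=P_{E_\sigma}K$, $L_i:=P_{E_{\sigma_i}}K$, and $F_{\sigma_i}:=E_{\sigma_i}\cap E_\sigma^\perp=\R^{\sigma\setminus\sigma_i}$. For $y\in L_\sigma$ set $K_y:=K\cap(y+E_\sigma^\perp)\subset\R^{\sigma}$; then $(L_i)_y:=L_i\cap(y+F_{\sigma_i})=P_{F_{\sigma_i}}(K_y)$, and Fubini gives $|K|=\int_{L_\sigma}|K_y|\,dy$ and $|L_i|=\int_{L_\sigma}|(L_i)_y|\,dy$. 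We may assume $m>\lambda$, since for $m=\lambda$ necessarily $\sigma_1=\ldots=\sigma_m=\sigma$ and \eqref{L-LM-unif} reads $|L_\sigma|^m\leq|L_\sigma|^m$.

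The combinatorial key is that if $(\sigma_1,\ldots,\sigma_m)$ is $\lambda$-uniform on $\sigma$, then the complementary family $(\sigma\setminus\sigma_i)_{i=1}^m$ is $(m-\lambda)$-uniform on $\sigma$. Applying Bollob\'as-Thomason \eqref{BT-int} to $K_y\subset\R^\sigma$ with this complementary cover yields the pointwise fiber inequality
$$
  |K_y|^{m-\lambda}\,\leq\,\prod_{i=1}^m|P_{\R^{\sigma\setminus\sigma_i}}K_y|\,=\,\prod_{i=1}^m|(L_i)_y|,\qquad y\in L_\sigma.
$$
By Brunn-Minkowski \eqref{eq:BM}, both $y\mapsto|K_y|^{1/k}$ and each $y\mapsto|(L_i)_y|^{1/(k-k_i)}$ are concave non-negative functions on $L_\sigma$, whose dimension is $n-k$. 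Applying Berwald's inequality \eqref{Berwald-ine} to $|K_y|^{1/k}$ with $p=k$ and $q=k(m-\lambda)$ gives the lower bound
$$
  \int_{L_\sigma}|K_y|^{m-\lambda}\,dy\,\geq\,\frac{\binom{n}{k}^{m-\lambda}}{\binom{n+k(m-\lambda-1)}{k(m-\lambda)}}\cdot\frac{|K|^{m-\lambda}}{|L_\sigma|^{m-\lambda-1}}.
$$
Combined with the complementary upper bound
$$
  \int_{L_\sigma}\prod_{i=1}^m|(L_i)_y|\,dy\,\leq\,\frac{\prod_{i=1}^m\binom{n-k_i}{k-k_i}}{\binom{n+k(m-\lambda-1)}{k(m-\lambda)}}\cdot\frac{\prod_i|L_i|}{|L_\sigma|^{m-1}},
$$
which I would derive from a multi-variable Berwald-type inequality (in the spirit of \cite{LP-24}) applied to the concave family $\{|(L_i)_y|^{1/(k-k_i)}\}_{i=1}^m$ on $L_\sigma$, and simplifying via $\binom{n-k_i}{k-k_i}=\binom{n-k_i}{n-k}$ and $\binom{n}{k}=\binom{n}{n-k}$, one obtains \eqref{L-LM-unif}. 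A direct calculation on the cone over a product of simplices $\prod_i\Delta_{k_i}$ shows that both integral inequalities above are simultaneously sharp, which accounts for the sharpness of the constant.

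The principal obstacle is the sharp multi-variable upper bound in the previous display. A na\"ive pointwise use of Rogers-Shephard \eqref{lb-RS} replaces each $|(L_i)_y|$ by its maximum $\binom{n-k_i}{k-k_i}|L_i|/|L_\sigma|$ and loses precisely the factor $\binom{n+k(m-\lambda-1)}{k(m-\lambda)}$ that was supposed to cancel with the Berwald lower bound, so only the weaker constant $\prod_i\binom{n-k_i}{k-k_i}$ is recovered. To reach the sharp constant one has to couple the $|(L_i)_y|$ across $i$ through the joint concavity they inherit from their common parent body $K_y$; this coupling is exactly what upgrades the average $\lambda k/m$ appearing in \eqref{m-proj-BT} to the full vector $(k_1,\ldots,k_m)$ in the new constant.
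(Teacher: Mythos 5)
Your slicing-and-Bollob\'as--Thomason setup matches the paper's: both fix $y\in L_\sigma:=P_{E_\sigma}K$, apply \eqref{BT-int} to the fiber over $y$ using the complementary $(m-\lambda)$-uniform cover $(\sigma\setminus\sigma_1,\ldots,\sigma\setminus\sigma_m)$, and then integrate with the help of Berwald's inequality. But where the paper next applies H\"older \emph{inside} the integral and then one-variable Berwald to each resulting factor, you try to separately lower-bound $\int_{L_\sigma}|K_y|^{m-\lambda}\,dy$ and upper-bound $\int_{L_\sigma}\prod_i|(L_i)_y|\,dy$, and this route has two genuine gaps.

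First, your claimed lower bound has the direction of Berwald's inequality reversed. For a non-negative concave $f$ supported on a $d$-dimensional convex body $L$, the quantity $\left(\binom{d+p}{p}|L|^{-1}\int_L f^p\right)^{1/p}$ is \emph{non-increasing} in $p$ (the tent function is the equality case); compare the paper's own use of Berwald inside the proof, which goes from the larger exponent $|\sigma|$ down to the smaller $|\sigma|-|\sigma_i|$, not the reverse. Applying the correct direction with $f=|K_y|^{1/k}$, $p=k$, $q=k(m-\lambda)$ yields an \emph{upper} bound on $\int|K_y|^{m-\lambda}\,dy$, which is useless for your chain. Concretely, with $n=2$, $K$ the unit disk, $\sigma$ a singleton and $m-\lambda=2$, one has $\int_{-1}^{1}(2\sqrt{1-y^2})^{2}\,dy=16/3\approx 5.33$, while your displayed constant gives the claimed lower bound $\tfrac{4}{3}\cdot\tfrac{\pi^2}{2}=\tfrac{2\pi^2}{3}\approx 6.58>16/3$, so the inequality as you wrote it is false.

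Second, even granting a usable lower bound, the sharp multi-variable upper bound on $\int\prod_i|(L_i)_y|\,dy$ that you require is only asserted, not proved; you identify it yourself as ``the principal obstacle'' and correctly observe that the na\"ive pointwise Rogers--Shephard bound loses exactly the factor needed. The paper's route avoids both issues: working with $|K|=\int|K_y|\,dy\le\int\prod_i|K_i(y)|^{1/(m-\lambda)}\,dy$, it applies H\"older with conjugate exponents $\tfrac{|\sigma|(m-\lambda)}{|\sigma|-|\sigma_i|}$ to split the integral into factors $\left(\int f_i^{|\sigma|}\right)^{(|\sigma|-|\sigma_i|)/(|\sigma|(m-\lambda))}$ where $f_i=|K_i(y)|^{1/(|\sigma|-|\sigma_i|)}$ is concave by Brunn--Minkowski, and then applies single-variable Berwald (from exponent $|\sigma|$ down to $|\sigma|-|\sigma_i|$) to each factor separately, converting it into $|P_{E_{\sigma_i}}K|$ with exactly the binomial in \eqref{L-LM-unif}. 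No multi-variable Berwald inequality is required.
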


\begin{proof} The proof  is based on the method from \cite{BGL-18}.
    The case $m = \lambda$ is trivial. Assume $m > \lambda$. We have
    \[
         \lambda | \sigma | = \sum_{i=1}^m |\sigma_i|.
    \]
    For every $ y \in P_{E_\sigma} K$, we define the sets
    \[
        K(y) := \{ t \in E_\sigma^\perp : y+t \in K\},
    \]
    and
    \[
        K_i(y) := \{ t \in E_\sigma^\perp\cap E_{\sigma_i} : y+t \in P_{E_{\sigma_i}} K \}.
    \]
    Note that $(\sigma\smallsetminus \sigma_1, \ldots , \sigma \smallsetminus \sigma_m) $ is $(m-\lambda)-$uniform cover of $\sigma$. Using \eqref{BT-int}, we have
    \begin{equation*} \label{LM-p1}
        |K| = \int_{P_{E_\sigma} K} \left| K(y) \right| dy \leq \int_{P_{E_\sigma} K} \prod_{i=1}^m \left| K_i(y) \right|^{\frac{1}{m-\lambda}} dy.
    \end{equation*}
    For each $i$, we define $f_i :P_{E_\sigma} K \rightarrow [0,\infty)$ by
    \[
        f_i (y) := \left| K_i(y) \right|^{\frac{1}{| \sigma |-|\sigma_i|}}.
    \]
    Using H\"{o}lder inequality, we have
    \[
        |K|
        \leq 
        \int_{P_{E_\sigma} K} \prod_{i=1}^m f_i(y)^{\frac{| \sigma |-|\sigma_i|}{m-\lambda}} dy 
        \leq 
        \prod_{i=1}^m \left(  \int_{P_{E_\sigma} K} f_i (y) ^{| \sigma |} dy \right)^{\frac{| \sigma |-|\sigma_i|}{| \sigma |(m-\lambda)}}.
    \]
    Applying (\ref{eq:BM}), we get that $f_i$ is concave, thus, we can apply the Berwald inequality \eqref{Berwald-ine} to get
    \[
        \left( \frac{\binom{n}{n-| \sigma |}}{|P_{E_\sigma} K|} \int_{P_{E_\sigma} K} f_i (y) ^{| \sigma |} dy \right)^{\frac{1}{| \sigma |}} 
        \leq 
        \left( \frac{\binom{n-|\sigma_i|}{n-| \sigma |}}{|P_{E_\sigma} K|}\int_{P_{E_\sigma} K} f_i (y) ^{| \sigma |-|\sigma_i|} dy \right)^{\frac{1}{| \sigma |-|\sigma_i|}}.
    \]
    Hence,
    \begin{align*}
        |K|^{m-\lambda} 
        &\leq \prod_{i=1}^m \left(  \int_{P_{E_\sigma} K} f_i (y) ^{| \sigma |} dy \right)^{\frac{| \sigma |-|\sigma_i|}{| \sigma |}}
        \\
        &\leq
        \frac{1}{\binom{n}{n-| \sigma |}^{m-\lambda}|P_{E_\sigma} K|^\lambda} \prod_{i=1}^m \binom{n-|\sigma_i|}{n-| \sigma |} \int_{P_{E_\sigma} K} f_i (y) ^{| \sigma |-|\sigma_i|} dy 
        \\
        &=
        \frac{1}{\binom{n}{n-| \sigma |}^{m-\lambda}|P_{E_\sigma} K|^\lambda} \prod_{i=1}^m \binom{n-|\sigma_i|}{n-| \sigma |} \left| P_{E_{\sigma_i}} K \right|. \qedhere
    \end{align*}
\end{proof}
\begin{remark} \label{HannerP}
    The equality in  \eqref{L-LM-unif} can be achieved by using the Hanner polytope,
    \[
        K = \Conv\left\{ \sum_{i\in \sigma} [-e_i,e_i], \sum_{i \not\in \sigma}[-e_i,e_i] \right\}.
    \]
    We include a detailed computation  in the Appendix, Lemma \ref{Cal-LM}.
\end{remark}

\begin{corollary} \label{1-uni-prof}
    Let $\sigma \subset [n]$, $m \geq 1$ and $(\sigma_1, \ldots, \sigma_m)$ be $1-$uniform cover of $\sigma$. For any convex body $K \subset \R^n$, we have
    \begin{equation} \label{m-proj}
        \left| K \right|^{m-1} \left|P_{E_\sigma} K \right| \leq d_{n,m} (\alpha_1,\ldots,\alpha_m) \prod_{i=1}^m \left| P_{E_{\sigma_i}} K \right|,
    \end{equation}
    where $ \alpha_i = \dim E_{\sigma_i}$, and 
\begin{equation}\label{eq:dnm}
        d_{n,m}(\alpha_1,\ldots,\alpha_m) := \frac{\prod_{i=1}^m \binom{\alpha_i}{N} }{\binom{n}{N}^{m-1}},
\end{equation} 
    where $N = \dim E_\sigma = \sum_{i=1}^m \alpha_i -n(m-1) \geq 0.$ 
\end{corollary}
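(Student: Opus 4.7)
The plan is to obtain this as an immediate specialization of Theorem \ref{LM-uni-cov} to the case $\lambda = 1$, followed by a bookkeeping translation between the $\sigma_i$-notation and the $\alpha_i$-notation. Since Theorem \ref{LM-uni-cov} has already been proved, no new analytic input is required — the only task is to verify that the constant appearing in \eqref{L-LM-unif} coincides with $d_{n,m}(\alpha_1,\ldots,\alpha_m)$ as defined in \eqref{eq:dnm}.

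First, I would set $\lambda = 1$ in Theorem \ref{LM-uni-cov}, which yields
\[
    \left| K \right|^{m-1} \left|P_{E_\sigma} K \right|
    \leq
    \frac{\prod_{i=1}^m \binom{n-|\sigma_i|}{n-|\sigma|}}{\binom{n}{n-|\sigma|}^{m-1}} \prod_{i=1}^m \left| P_{E_{\sigma_i}} K \right|.
\]
Next, I would use the dimension relations $\alpha_i = \dim E_{\sigma_i} = n - |\sigma_i|$ and $N = \dim E_\sigma = n - |\sigma|$, so that $\binom{n-|\sigma_i|}{n-|\sigma|} = \binom{\alpha_i}{N}$ and $\binom{n}{n-|\sigma|} = \binom{n}{N}$. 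Substituting these identities turns the constant into exactly $d_{n,m}(\alpha_1,\ldots,\alpha_m)$.

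Finally, I would check the stated identity $N = \sum_{i=1}^m \alpha_i - n(m-1)$. Because $(\sigma_1,\ldots,\sigma_m)$ is a $1$-uniform cover of $\sigma$, each element of $\sigma$ lies in precisely one $\sigma_i$, hence $\sum_{i=1}^m |\sigma_i| = |\sigma|$. Therefore
\[
    \sum_{i=1}^m \alpha_i - n(m-1) = \sum_{i=1}^m \bigl(n - |\sigma_i|\bigr) - n(m-1) = nm - |\sigma| - n(m-1) = n - |\sigma| = N,
\]
and $N \geq 0$ since $|\sigma| \leq n$. This confirms \eqref{m-proj} with the constant as defined in \eqref{eq:dnm}.

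There is no real obstacle in this argument — the entire difficulty is absorbed by Theorem \ref{LM-uni-cov}, and the corollary is simply the $\lambda=1$ case rewritten in the complementary-dimension notation that will be convenient for the applications in later sections (e.g.\ Theorem \ref{L1-sum} and Corollary \ref{corant}).
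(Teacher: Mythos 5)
Your argument is correct and matches the paper's intent: the corollary is the $\lambda=1$ specialization of Theorem \ref{LM-uni-cov}, and the paper gives no separate proof because the translation $\alpha_i = n-|\sigma_i|$, $N = n-|\sigma|$, together with $\sum_i |\sigma_i| = |\sigma|$ for a $1$-uniform cover, is exactly what you carried out. The bookkeeping checks out.
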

\begin{remark}
We note that    \eqref{projcontr} is a particular case of \eqref{m-proj} when $m =2$.
\end{remark}
\section{Inequalities for Anti-blocking bodies}

We start with a lemma which provides an upper bound on the Minkowski sum of anti-blocking bodies using the volume of their  projections.
\begin{lemma}
    Fix a positive integer $r$ and  let $A_1,\ldots,A_r$ be anti-blocking bodies in $\R^n$. Then,
    \begin{equation} \label{upbd-anti-blocking}
        \left| A_1 + \ldots + A_r \right| 
        \leq 
        \sum_{(\sigma_1,\ldots,\sigma_r)} \left( \prod_{i=1}^r \left| P_{E_{\sigma_{i}}^\perp} A_i\right| \right),
    \end{equation}
    where the sum is taken over all 1-uniform covers $(\sigma_1,\ldots,\sigma_r)$ of $[n]$.
\end{lemma}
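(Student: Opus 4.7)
The plan is to proceed by induction on $r$. The base case $r=2$ is essentially just a combination of inequalities \eqref{antproj2} and \eqref{antproj}: any $1$-uniform cover of $[n]$ into two sets is a partition $(\sigma,[n]\setminus\sigma)$, in which case $E_{[n]\setminus\sigma}^\perp=E_\sigma$, so that summing $|P_{E_\sigma^\perp}A_1|\,|P_{E_\sigma}A_2|$ over all $\sigma\subset[n]$ produces exactly $|A_1-A_2|$ via \eqref{antproj}, which in turn bounds $|A_1+A_2|$ from above by \eqref{antproj2}.

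For the inductive step from $r-1$ to $r$, I would group the sum as $A_1+\ldots+A_r=A_1+B$, where $B:=A_2+\ldots+A_r$ is anti-blocking, since the class is closed under Minkowski addition. Applying the $r=2$ case to the pair $(A_1,B)$ yields
\[
    |A_1+\ldots+A_r|\leq\sum_{\tau\subset[n]}|P_{E_\tau^\perp}A_1|\cdot|P_{E_\tau}B|.
\]
Orthogonal projection distributes over Minkowski sums, so $P_{E_\tau}B=P_{E_\tau}A_2+\ldots+P_{E_\tau}A_r$, and each $P_{E_\tau}A_i=A_i\cap E_\tau$ is an anti-blocking body inside the coordinate subspace $E_\tau$, which we identify with $\R^{[n]\setminus\tau}_+$. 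Applying the induction hypothesis inside $E_\tau$ gives
\[
    |P_{E_\tau}B|\leq\sum_{(\tau_2,\ldots,\tau_r)}\prod_{i=2}^r|P_{E_{\tau_i}^\perp}A_i|,
\]
where the inner sum is over $1$-uniform covers $(\tau_2,\ldots,\tau_r)$ of $[n]\setminus\tau$; here I use that for $\tau_i\subset[n]\setminus\tau$ one has $E_{\tau_i}^\perp\subset E_\tau$ and therefore $P_{E_{\tau_i}^\perp}P_{E_\tau}A_i=P_{E_{\tau_i}^\perp}A_i$, so the projection taken within $E_\tau$ agrees with the ambient one. Finally, setting $\tau_1:=\tau$, the pairs consisting of $\tau\subset[n]$ together with a $1$-uniform cover of $[n]\setminus\tau$ are in bijection with $1$-uniform covers $(\tau_1,\ldots,\tau_r)$ of $[n]$, so substituting the second display into the first yields exactly the claimed inequality.

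There is no substantive geometric obstacle beyond what \eqref{antproj} and \eqref{antproj2} already encode; the work is essentially bookkeeping. The one point I would be most careful about is the compatibility of the inductive hypothesis with restriction to $E_\tau$: namely, that anti-blockingness is preserved under coordinate projection, that projection commutes with Minkowski sum, and that a $1$-uniform cover structure in the subspace $E_\tau$ lifts to one in $[n]$ upon adjoining $\tau_1=\tau$. Once these points are verified, the induction runs algorithmically.
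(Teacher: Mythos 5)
Your proof is correct and uses the same essential ingredients as the paper: induction, the decomposition formula \eqref{antproj}, and the inequality \eqref{antproj2}. The only difference is a mild reorganization of the induction — you peel off $A_1$ first via the two-body case and then apply the inductive hypothesis to $(A_2,\ldots,A_r)$ inside each coordinate subspace $E_\tau$, whereas the paper applies the inductive hypothesis in $\R^n$ to the $r-1$ bodies $A_1,\ldots,A_{r-2},A_{r-1}+A_r$ and then splits the projection of $A_{r-1}+A_r$; both routes run through the same facts and are equally valid.
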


\begin{proof}
    The proof is by induction on $r$. The case $ r= 1$ is trivial. Using inductive assumption, we get 
    \begin{equation*} \label{upbd-anti-blocking-fp1}
        \left| A_1 + \ldots + A_r \right| 
        \leq 
        \sum_{(\sigma_1,\ldots,\sigma_{r-1})} \left( \left| P_{E_{\sigma_{r-1}}^\perp} (A_{r-1}+A_r)\right|  \cdot \prod_{i=1}^{r-2} \left| P_{E_{\sigma_{i}}^\perp} A_i\right| \right).
    \end{equation*}
    From \eqref{antproj2} and \eqref{antproj}, we have
    \begin{align*}
        \left| P_{E_{\sigma_{r-1}}^\perp} (A_{r-1}+A_r)\right| &\leq \left| P_{E_{\sigma_{r-1}}^\perp} A_{r-1}- P_{E_{\sigma_{r-1}}^\perp}A_r\right| 
        \\
        &= \sum_{(\sigma_{r-1}',\sigma_r')} \left| P_{E_{\sigma_{r-1}'}^\perp} A_{r-1} \right| \left| P_{E_{\sigma_{r}'}^\perp} A_r \right|,
    \end{align*}
    where the sum is taken over 1-uniform cover $ (\sigma_{r-1}',\sigma_r')$ of $\sigma_{r-1}$. Then,
    \begin{equation*} 
        \left| A_1 + \ldots + A_r \right| 
        \leq 
        \sum_{(\sigma_1,\ldots,\sigma_{r-2},\sigma_{r-1}',\sigma_r')} \left( 
        \left| P_{E_{\sigma_{r-1}'}^\perp} A_{r-1} \right| \left| P_{E_{\sigma_{r}'}^\perp} A_r \right|
         \prod_{i=1}^{r-2} \left| P_{E_{\sigma_{i}}^\perp} A_i\right| \right).
    \end{equation*}
    The proof is finished since $(\sigma_1,\ldots,\sigma_{r-2},\sigma_{r-1}',\sigma_r')$ is 1-uniform of $[n]$. 
\end{proof}

\begin{theorem} \label{m-upcn-sum}
    Fix $n,m\in \mathbb{N}$ and let $c_1(n,m)$ be the best constant such that for any anti-blocking bodies $A,B_1,\ldots,B_m $ in $\R^n$, one has
    \begin{equation*} \label{bztm}
        \left| A\right|^{m-1} \left| A-B_1 - \cdots -B_m \right| \leq c_1(n,m) \prod_{i=1}^m\left| A - B_i \right|.
    \end{equation*} 
    Let $c_2(n,m)$ be the best constant such that for any anti-blocking body $A$ in $\R^n$ and for any 1-uniform cover $(\sigma_1,\ldots,\sigma_m)$ of $\sigma$, one has 
            \begin{equation} \label{m-upcn}
                 |A|^{m-1}|P_{E_\sigma}A|  \leq  c_2(n,m) \prod_{i=1}^m |P_{E_{\sigma_i}} A|.
            \end{equation}
    Then, $c_1(n,m) = c_2(n,m) $.
\end{theorem}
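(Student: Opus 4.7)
The plan is to show $c_1(n,m)=c_2(n,m)$ via the two opposite inequalities.

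For $c_1(n,m)\le c_2(n,m)$: Since the class of anti-blocking bodies is closed under Minkowski sums, one can apply \eqref{antproj} to $A$ and $B:=B_1+\cdots+B_m$ and then apply Lemma \eqref{upbd-anti-blocking} to the Minkowski sum of the projected bodies inside each term. Concretely,
\[
|A-B_1-\cdots-B_m|=\sum_{\rho\subset[n]}|P_{E_\rho}A|\,|P_{E_\rho^\perp}B_1+\cdots+P_{E_\rho^\perp}B_m|,
\]
and for each $\rho$ the inner sum is bounded by $\sum_{(\tau_1,\ldots,\tau_m)}\prod_i|P_{E_{\tau_i}^\perp}B_i|$, where $(\tau_1,\ldots,\tau_m)$ ranges over $1$-uniform covers of $\rho$. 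On the other side, $\prod_i|A-B_i|$ expands via \eqref{antproj} into a sum over $m$-tuples $(\rho_1,\ldots,\rho_m)$ of subsets of $[n]$. I would then match each LHS term, indexed by $(\rho,(\tau_i))$, to the RHS term obtained by setting $\rho_i=\tau_i$. The common factor $\prod_i|P_{E_{\tau_i}^\perp}B_i|$ cancels, and the residual inequality
\[
|A|^{m-1}|P_{E_\rho}A|\le c_2(n,m)\prod_{i=1}^m|P_{E_{\tau_i}}A|
\]
is precisely the defining inequality for $c_2(n,m)$ applied to $A$ and the $1$-uniform cover $(\tau_1,\ldots,\tau_m)$ of $\rho$. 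The RHS terms whose $\rho_i$ are not pairwise disjoint are non-negative and only help, so the bound follows upon summation.

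For $c_2(n,m)\le c_1(n,m)$: Given an anti-blocking body $A$ and a $1$-uniform cover $(\sigma_1,\ldots,\sigma_m)$ of $\sigma\subset[n]$, I would use the explicit anti-blocking ``two-scale boxes''
\[
B_i:=[0,M]^{\sigma_i}\times[0,\varepsilon]^{\sigma_i^c},
\]
with parameters $M\gg 1$ and $\varepsilon\ll 1$. Formula \eqref{antproj} gives
\[
|A-B_i|=\sum_{\rho_i\subset[n]}|P_{E_{\rho_i}}A|\,M^{|\rho_i\cap\sigma_i|}\,\varepsilon^{|\rho_i\cap\sigma_i^c|}.
\]
Sending $\varepsilon\to 0$ removes all terms with $\rho_i\not\subset\sigma_i$, after which $M\to\infty$ leaves only the top-degree term $\rho_i=\sigma_i$, yielding $|A-B_i|\sim|P_{E_{\sigma_i}}A|\,M^{|\sigma_i|}$. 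A parallel computation for the sum body $B_1+\cdots+B_m=[0,M+(m-1)\varepsilon]^{\sigma}\times[0,m\varepsilon]^{\sigma^c}$ gives $|A-B_1-\cdots-B_m|\sim|P_{E_\sigma}A|\,M^{|\sigma|}$. Since $\sum_i|\sigma_i|=|\sigma|$ by $1$-uniformity, the powers of $M$ cancel in the ratio, which therefore tends to $|A|^{m-1}|P_{E_\sigma}A|/\prod_i|P_{E_{\sigma_i}}A|$. Taking the supremum over $A$ and over $1$-uniform covers gives $c_2(n,m)\le c_1(n,m)$.

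The main obstacle is the combinatorial bookkeeping in the first direction: one must correctly identify each LHS term indexed by a pair $(\rho,(\tau_i))$ with the RHS term at $\rho_i=\tau_i$, noting that this only accounts for RHS terms with pairwise disjoint $\rho_i$'s whose union is $\rho$, and then verify that the residual $A$-dependent factor is exactly the quantity controlled by the $c_2(n,m)$ inequality. The second direction is a routine dominant-term analysis once the correct test bodies are in hand; the only minor care is for degenerate cases such as $\sigma_i=\emptyset$, where $P_{E_{\sigma_i}}A=A$ and the corresponding scaling factor $M^{|\sigma_i|}=1$ still fits the matching.
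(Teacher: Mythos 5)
Your proof is correct, and the overall structure (two inequalities) matches the paper, but the two directions are treated somewhat differently.

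For $c_1(n,m)\le c_2(n,m)$: your argument is essentially identical to the paper's. Both expand $|A-(B_1+\cdots+B_m)|$ via \eqref{antproj}, bound the inner Minkowski-sum term by Lemma \eqref{upbd-anti-blocking} applied inside $E_\rho^\perp$, and then match each resulting term, indexed by a $1$-uniform cover $(\tau_1,\ldots,\tau_m)$ of $\rho$, to the term of $\prod_i|A-B_i|$ at $(\rho_i)=(\tau_i)$; the $B$-dependent factors cancel and the residual $A$-dependent factor is controlled by the defining inequality for $c_2(n,m)$. Your observation that this matching is injective (the $\tau_i$ determine $\rho=\cup\tau_i$) and that the leftover RHS terms only help is exactly the point the paper is implicitly using.

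For $c_2(n,m)\le c_1(n,m)$: here you genuinely diverge from the paper. The paper simply cites \cite[Theorem~10]{N-24} for this direction. Your argument is self-contained: you take the two-scale test boxes $B_i=[0,M]^{\sigma_i}\times[0,\varepsilon]^{\sigma_i^c}$, apply \eqref{antproj} to each $|A-B_i|$ and to $|A-(B_1+\cdots+B_m)|$, and extract the leading terms as $\varepsilon\to 0$ and then $M\to\infty$. The scaling works out because $\sum_i|\sigma_i|=|\sigma|$ for a $1$-uniform cover, so the factors $M^{|\sigma_i|}$ cancel against $M^{|\sigma|}$, and the limit of the ratio is precisely $|A|^{m-1}|P_{E_\sigma}A|/\prod_i|P_{E_{\sigma_i}}A|$. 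This is a clean elementary argument that avoids the external reference; the only care needed (which you address) is the handling of degenerate $\sigma_i=\emptyset$, where the conventions $P_{E_\emptyset}A=A$ and $M^0=1$ make the bookkeeping go through, and keeping $\varepsilon>0$ so the $B_i$ remain full-dimensional anti-blocking bodies while the limit is taken.
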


\begin{proof}
The existence of constants $c_1(n,m)$ and $c_2(n,m)$ are known (see, for example, \cite{N-24}), moreover,   it was proved in \cite[Theorem~10]{N-24} that $c_2(n,m) \leq c_1(n,m)$. Using \eqref{antproj} and \eqref{upbd-anti-blocking}, we have
    \begin{align*}
         \left| A\right|^{m-1} &\left| A-B_1 - \cdots -B_m \right|
         \\
         &\leq
          \left| A\right|^{m-1} \sum_{\sigma \subset [n]} \left| P_{E_{\sigma}} A \right| \left| P_{E_\sigma^\perp} (B_1 + \cdots +B_m) \right|
          \\
          &\leq 
          \left| A\right|^{m-1} \sum_{\sigma \subset [n]} \left(\left| P_{E_{\sigma}} A \right| \sum_{(\sigma_1,\ldots,\sigma_m)} \left(\prod_{i=1}^m \left| P_{E_{\sigma_{i}}^\perp} B_i\right| \right) \right),
    \end{align*}
    where $(\sigma_1,\ldots,\sigma_r)$ is 1-uniform cover of $\sigma$.
    Using \eqref{antproj}, we have 
    \begin{equation} \label{Cal-T9}
        \prod_{i=1}^m\left| A - B_i \right| = \prod_{i=1}^m \left( \sum_{G_i \in \G} \left(\left| P_{G_i} A \right| \left| P_{G_i^\perp} B_i \right| \right)\right).
    \end{equation}
    Using \eqref{m-upcn} and \eqref{Cal-T9}, we have
    \begin{align*}
        \left| A\right|^{m-1} &\left| A-B_1 - B_2- \cdots -B_m \right|
        \\
        &\leq 
        \sum_{\sigma \subset [n]} \left(c_2(n,m) \prod_{i=1}^m |P_{E_{\sigma_i}} A| \sum_{(\sigma_1,\ldots,\sigma_m)} \prod_{i=1}^m \left| P_{E_{\sigma_{i}}^\perp} B_i\right| \right)
        \\
        &\leq \sum_{\sigma \subset [n]} \left(c_2(n,m) \sum_{(\sigma_1,\ldots,\sigma_m)} \prod_{i=1}^m \left(|P_{E_{\sigma_i}} A|  \left| P_{E_{\sigma_{i}}^\perp} B_i\right| \right) \right)
        \\
        &\leq c_2(n,m) \prod_{i=1}^m \left( \sum_{G_i \in \G} \left(\left| P_{G_i} A \right| \left| P_{G_i^\perp} B_i \right| \right)\right)
        \\
        &= c_2(n,m) \prod_{i=1}^m\left| A - B_i \right|.
    \end{align*}
    This means that $c_1(n,m) \leq c_2(n,m)$. 
\end{proof}

The following Corollary \ref{corant} is a consequent of the proof of Theorem \ref{m-upcn-sum} and Corollary \ref{1-uni-prof}. Moreover, together with \eqref{antproj2}, it implies inequality \eqref{m-Plu-Ruz-diff} with the best possible constant $\zeta_{n,m}$ for the case of anti-blocking bodies. 

\begin{corollary}\label{corant}
    For anti-blocking bodies $A,B_1,\ldots,B_m \subset \R^n_+,$ we have 
    \begin{equation} \label{m-upcn-sum-wc}
        \left| A\right|^{m-1} \left| A\pm B_1 \pm \cdots \pm B_m \right| 
        \leq 
        \zeta_{n,m} \prod_{i=1}^m\left| A - B_i \right|,
    \end{equation}
    where 
    $$
        \zeta_{n,m} := \underset{0\leq \alpha_i \leq n}{\max} d_{n,m}(\alpha_1,\ldots,\alpha_m),
    $$
    where $d_{n,m}$ is defined in (\ref{eq:dnm}).  Moreover, $ \zeta_{n,m} $ is the best possible constant. 
\end{corollary}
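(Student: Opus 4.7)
The plan is to read Corollary \ref{corant} off the combination of Theorem \ref{m-upcn-sum}, Corollary \ref{1-uni-prof}, and \eqref{antproj2}, interpreting the $\pm$ as allowing at least the all-plus and all-minus choices (the genuinely mixed-sign cases being treated by a further iteration of \eqref{antproj2}).

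For the all-minus version $|A|^{m-1}|A - B_1 - \cdots - B_m|$, Theorem \ref{m-upcn-sum} identifies the sharp constant with the sharp local constant $c_2(n,m)$ in \eqref{m-upcn}, where the supremum is taken over all $\sigma \subset [n]$ and all 1-uniform covers $(\sigma_1, \ldots, \sigma_m)$ of $\sigma$. Corollary \ref{1-uni-prof}, specialized to $\alpha_i = \dim E_{\sigma_i}$, then gives
\[
c_2(n,m) \leq \max_{0 \leq \alpha_i \leq n} d_{n,m}(\alpha_1, \ldots, \alpha_m) = \zeta_{n,m}.
\]
For the all-plus version, the body $B_1 + \cdots + B_m$ is anti-blocking as a Minkowski sum of anti-blocking bodies, so \eqref{antproj2} yields $|A + (B_1 + \cdots + B_m)| \leq |A - (B_1 + \cdots + B_m)|$ and the preceding bound applies verbatim.

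For sharpness of $\zeta_{n,m}$, I invoke Remark \ref{HannerP} (with the detailed computation in Lemma \ref{Cal-LM}): for any admissible tuple $(\alpha_1, \ldots, \alpha_m)$, the Hanner polytope aligned with a 1-uniform cover realizing these dimensions saturates \eqref{m-proj}. Since $c_2(n,m)$ is the supremum over all such covers, this forces $c_2(n,m) \geq d_{n,m}(\alpha_1, \ldots, \alpha_m)$ for every admissible tuple, and hence $c_2(n,m) \geq \zeta_{n,m}$. Combining this with the upper bound and Theorem \ref{m-upcn-sum} gives $c_1(n,m) = c_2(n,m) = \zeta_{n,m}$, which is precisely the claimed optimality in \eqref{m-upcn-sum-wc}.

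The bulk of the work is assembly, not new estimation; the only delicate point is sharpness of the \emph{combined} inequality \eqref{m-upcn-sum-wc}, which requires the Hanner polytope witnessing tightness of Corollary \ref{1-uni-prof} to extend to a tight configuration for the whole chain of estimates used in the proof of Theorem \ref{m-upcn-sum}. In particular, the bound \eqref{upbd-anti-blocking} on $|P_{E_\sigma^\perp}(B_1 + \cdots + B_m)|$ must be individually saturated, which one arranges by taking each $B_i$ to be a small axis-aligned box supported in $E_{\sigma_i}^\perp$; this collapses the \eqref{antproj} expansion of $|A - B_i|$ to a single dominant term and recovers $|B_1 + \cdots + B_m|$ as the corresponding product. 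This is where the argument is the most fiddly, but it parallels the equality analysis already carried out in Lemma \ref{Cal-LM}.
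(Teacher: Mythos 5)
Your handling of the all-minus case (Theorem \ref{m-upcn-sum} together with Corollary \ref{1-uni-prof}), of the all-plus case (a single application of \eqref{antproj2} to the anti-blocking body $B_1 + \cdots + B_m$), and of sharpness (Hanner polytope, reduction to the all-minus sub-case of $\pm$) is sound and matches the paper's intent. The gap is in the genuinely mixed-sign cases, which you dispose of with ``a further iteration of \eqref{antproj2}.'' That reduction does not exist: the implicit inequality
\[
\Bigl|\, A + \sum_{\varepsilon_i = 1} B_i - \sum_{\varepsilon_i = -1} B_i \,\Bigr| \;\leq\; |A - B_1 - \cdots - B_m|
\]
is false in general. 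Take $n=2$, $m=3$, and $A=B_1=B_2=B_3=\Delta:=\Conv\{0,e_1,e_2\}$. Using \eqref{antproj}, $|A+B_1-B_2-B_3| = |2\Delta-2\Delta| = 2+4+4+2=12$, whereas $|A-B_1-B_2-B_3| = |\Delta-3\Delta| = \tfrac{9}{2} + 3 + 3 + \tfrac{1}{2} = 11$; the mixed-sign body is strictly larger. Moreover, once one $B_i$ has been moved to the minus side, the intermediate body $A - B_i$ is no longer anti-blocking, so \eqref{antproj2} cannot be applied a second time in the way you suggest.

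What the authors mean by deriving the corollary from \emph{the proof} of Theorem \ref{m-upcn-sum}, rather than its statement, is to rerun that proof for a general sign pattern. Set $P := A + \sum_{\varepsilon_i=1} B_i$ and $Q := \sum_{\varepsilon_i=-1} B_i$, both anti-blocking, apply \eqref{antproj} to $|P-Q|$, and then apply \eqref{upbd-anti-blocking} separately to the Minkowski sums $P_{E_\sigma} P$ inside $E_\sigma$ and $P_{E_\sigma^\perp} Q$ inside $E_\sigma^\perp$. Collecting terms, the resulting upper bound is indexed by $1$-uniform covers of $[n]$ into $m+1$ parts (one for $A$, one for each $B_i$), exactly as in the all-minus expansion in the proof of Theorem \ref{m-upcn-sum}; the remaining steps of that proof then go through unchanged and produce the constant $\zeta_{n,m}$. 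Your sharpness discussion, which only needs the all-minus choice of signs, does not require modification.
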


\begin{remark}
    We provide the explicit formula of the constant $ \zeta_{n,m} $ when $ m = 2$ in Lemma \ref{cn-calculation} (see Appendix below), 
    \[
       \zeta_{n,2} =
       \begin{cases}
           \displaystyle \frac{(2k)!^3}{(3k)!k!^3}; & n =3k, \\
           \displaystyle \frac{(2k+1)^2}{(k+1)(3k+1)}\frac{(2k)!^3}{(3k)!k!^3}; & n=3k+1, \\
           \displaystyle \frac{2(2k+1)^3}{(k+1)(3k+1)(3k+2)}\frac{(2k)!^3}{(3k)!k!^3}; & n =3k+2.
       \end{cases}
   \]
\end{remark}

\begin{theorem}\label{L1-sum}
    Let \( A,B \subset \R^n_+\) be anti-blocking bodies. Then, for any proper coordinate subspace \( E \in \GK[n][i] \),
    \begin{equation} \label{ratiofp}
        \frac{\left| A\right|}{\left| P_E A \right|} +  \frac{\left| B\right|}{\left| P_E B \right|} 
        \leq 
        r_{n,i}  \frac{\left| A - B\right|}{\left| P_E(A - B) \right|} ,
    \end{equation}
    where 
    \[
        r_{n,i} := \max_{0\leq j\leq n} d_{n,2}(i,j),
    \]
    where $d_{n,2}$ is defined in (\ref{eq:dnm}).
    Moreover, the constant \(r_{n,i}\) is the best possible constant.
\end{theorem}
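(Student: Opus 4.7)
The plan is to clear denominators in \eqref{ratiofp} and then bound each resulting piece using the local Loomis--Whitney estimate of Corollary \ref{1-uni-prof}. Fix $I\subset[n]$ with $|I|=i$ so that $E=F_I:=\Span\{e_j:j\in I\}$, and write $F_J:=\Span\{e_j:j\in J\}$ for any $J\subset[n]$. Since $P_E A$ and $P_E B$ are again anti-blocking (in $E\cong\R^i$), applying \eqref{antproj} both in $\R^n$ and in $E$ yields
\[
|A-B|=\sum_{J\subset[n]}|P_{F_J}A|\,|P_{F_{J^c}}B|,\qquad |P_E(A-B)|=\sum_{J\subset I}|P_{F_J}A|\,|P_{F_{I\setminus J}}B|,
\]
and \eqref{ratiofp} is equivalent to
\[
|A||P_EB||P_E(A-B)|+|B||P_EA||P_E(A-B)|\le r_{n,i}\,|P_EA||P_EB||A-B|.
\]

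For each fixed $J\subset I$ I would apply Corollary \ref{1-uni-prof} twice with $m=2$. Applied to $A$ with the $1$-uniform cover $(\sigma_1,\sigma_2)=(I^c,I\setminus J)$ of $[n]\setminus J$, a direct count gives $\alpha_1=i$, $\alpha_2=n-i+|J|$, and $\dim E_\sigma=|J|$, so
\[
|A|\,|P_{F_J}A|\le d_{n,2}(i,\,n-i+|J|)\,|P_EA|\,|P_{F_{J\cup I^c}}A|\le r_{n,i}\,|P_EA|\,|P_{F_{J\cup I^c}}A|.
\]
Symmetrically, applied to $B$ with the cover $(\sigma_1,\sigma_2)=(I^c,J)$ of $I^c\cup J$,
\[
|B|\,|P_{F_{I\setminus J}}B|\le d_{n,2}(i,\,n-|J|)\,|P_EB|\,|P_{F_{[n]\setminus J}}B|\le r_{n,i}\,|P_EB|\,|P_{F_{[n]\setminus J}}B|,
\]
since both constants are of the form $d_{n,2}(i,j)$ with $j\in[0,n]$ and hence $\le r_{n,i}$ by definition of the maximum.

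Multiplying the first inequality by $|P_EB|\,|P_{F_{I\setminus J}}B|$ and the second by $|P_EA|\,|P_{F_J}A|$, and summing over $J\subset I$, the left-hand sides become exactly $|A||P_EB||P_E(A-B)|$ and $|B||P_EA||P_E(A-B)|$, while the right-hand sides combine into $r_{n,i}|P_EA||P_EB|\cdot S$, where
\[
S=\sum_{J\subset I}\bigl(|P_{F_{J\cup I^c}}A||P_{F_{I\setminus J}}B|+|P_{F_J}A||P_{F_{[n]\setminus J}}B|\bigr).
\]
Re-indexing via $J'=J\cup I^c$ in the first summand and $J'=J$ in the second, $S$ is a partial sum of the decomposition of $|A-B|$ over those $J'\subset[n]$ with $J'\supset I^c$ or $J'\subset I$; since $0<i<n$ forces $I^c\not\subset I$ these families of indices are disjoint, so non-negativity gives $S\le |A-B|$, completing the inequality.

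For sharpness I would pick $j^*\in[0,n]$ realizing $r_{n,i}=d_{n,2}(i,j^*)$ and take $A,B$ to be anti-blocking Hanner-type polytopes in $\R^n_+$ (positive-orthant analogs of those of Remark \ref{HannerP}) adapted to a coordinate decomposition of $\R^n$ matched to the pair $(i,j^*)$, so that every use of Corollary \ref{1-uni-prof} in the argument becomes equality and only the terms assembled in $S$ survive in the full decomposition of $|A-B|$. The main obstacle is the combinatorial matching: identifying the two $1$-uniform covers above so that the constants they produce land in the family $\{d_{n,2}(i,j):0\le j\le n\}$, and checking that their pointwise bounds aggregate into a genuine sub-sum of $|A-B|$; once this matching is fixed the rest of the proof is purely algebraic.
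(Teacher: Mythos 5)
Your proof of the inequality itself is correct and is essentially the same argument the paper uses (the paper proves this theorem as the $p=1$ case of Theorem \ref{ratiolp-thm}). After clearing denominators, you expand $|A-B|$ and $|P_E(A-B)|$ via \eqref{antproj} and bound term by term with the local Loomis--Whitney estimate; your coordinate-subset bookkeeping matches the paper's subspace bookkeeping exactly, with your $F_{J\cup I^c}$ playing the role of the paper's $H$ satisfying $H^\perp=F^\perp\cap E$ and your $F_J$ ($J\subset I$) playing the role of the paper's $G$. The crucial disjointness observation (your two families of indices $J'\supset I^c$ and $J'\subset I$ are disjoint because $E$ is proper) is the same as the paper's remark that the terms of the $|A-B|$ expansion are used at most once, so that piece is sound.

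The sharpness argument, however, is only a vague sketch and has a real gap. Picking Hanner polytopes so that a \emph{single} application of Corollary \ref{1-uni-prof} is tight is easy, but your aggregated bound involved many applications (one per $J\subset I$) with different target constants $d_{n,2}(i,n-i+|J|)$ and $d_{n,2}(i,n-|J|)$, and these cannot all simultaneously equal $r_{n,i}$. Worse, even if you arrange that the surviving terms of the decomposition of $|A-B|$ are exactly the ones in $S$, the ratio you obtain is a \emph{weighted average} of the various $d_{n,2}(i,j)$ that appear, which is strictly below the maximum unless all but the extremal $j$ contribute zero. This is precisely why the paper does not use a single static pair $(A,B)$: it takes $B=U+\epsilon V$ (a nearly degenerate box) and $A=tK$ with $K$ the Hanner body achieving equality for the one extremal $(i,j^*)$, then sends $\epsilon\to 0$ so that all non-extremal terms of the decomposition vanish, and finally sends $t\to 0$ to isolate the leading order in $t$. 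Without such a limiting/degeneration argument your construction does not force the inequality to be tight, so the ``best possible constant'' claim is not established by your sketch.
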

We will prove Theorem  \ref{L1-sum} as a part (case $p=1$) of a more general result on Firey sums (see  Theorem \ref{ratiolp-thm} below).

\begin{remark}
    We provide the explicit formula of the constant $r_{n,i}$ in Proposition \ref{rn-calculation},
    \[
        r_{n,i} =
           \begin{cases}
               \displaystyle \frac{(2k)!(n-k)!^2}{k!^2(n-2k)!n!}; & i =2k, \\
               \displaystyle \frac{(2k+1)(n-2k)}{(k+1)(n-k)} \frac{(2k)!(n-k)!^2}{k!^2(n-2k)!n!}; & i=2k+1.
           \end{cases}
    \]
\end{remark}

\section{$L_p-$sums of anti-blocking bodies}
Firey \cite{F-62} extended the concept of Minkowski sum to $L_p-$sums of convex bodies $K$ and $L$ containing the origin, so-called $K \oplus_p L$ where $p \geq 1$, by using support functions:
\[
    h_{K\oplus_p L}^p=h_K^p +h_L^p. 
\]
The theory of Firey sum was futher developed by Lutwak, who created the notion of $L_p-$mixed volumes and proved a number of very useful inequalities \cite{L-93,L-96}.
Lutwak, Yang, Zhang \cite{LYZ-12} extended the notion of $L_p-$sums to the case of non-convex sets: for any  $K,L \subset \R^n$, $p>1$:
$$
    K\oplus_p L:= \{ (1-t)^{1/q}x +t^{1/q}y :x\in K, y\in L ,t \in [0,1]\},
$$
where $ \frac{1}{q} + \frac{1}{p} = 1 $. 
The following lemma is a well-known formula for the volume of a direct Firey sum.
\begin{lemma}
Let $E$ be an $i-$dimensional subspace of $\R^n$ and $A,B$  be convex bodies, containing the origin,  such that $A \subset E$ and $B \subset E^\bot$. Then, for any $p\geq 1$,
    \begin{equation}\label{vol_lp_sum}
        |A\oplus_p B|=
        \binom{n/q}{i/q}^{-1} 
        \left|A\right|\left|B\right|.
    \end{equation}
\end{lemma}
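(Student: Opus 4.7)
The plan is to reduce the volume computation to a one-dimensional Beta integral via the orthogonality $E\perp E^\perp$ and polar coordinates associated with the Minkowski gauge of $A$.

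First I would rewrite $A\oplus_p B$ in intrinsic terms. Any point $z=(1-t)^{1/q}x+t^{1/q}y$ with $x\in A\subset E$ and $y\in B\subset E^\perp$ decomposes uniquely as $z=u+v$, $u\in E$, $v\in E^\perp$, with $u=(1-t)^{1/q}x$ and $v=t^{1/q}y$. Setting $\alpha=\|u\|_A$ and $\beta=\|v\|_B$ and using that $A,B$ are star-shaped at the origin (being convex bodies containing $0$), such a representation exists iff there are $s,r\ge 0$ with $s^q+r^q=1$, $s\ge\alpha$, $r\ge\beta$, which is equivalent to $\alpha^q+\beta^q\le 1$. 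Therefore
\[
A\oplus_p B=\{u+v:u\in E,\ v\in E^\perp,\ \|u\|_A^{\,q}+\|v\|_B^{\,q}\le 1\}.
\]

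Second I would apply Fubini with respect to the splitting $\R^n=E\oplus E^\perp$. For fixed $u$ with $\|u\|_A\le 1$, the admissible slice in $E^\perp$ is $(1-\|u\|_A^{\,q})^{1/q}B$, which has $(n-i)$-dimensional volume $(1-\|u\|_A^{\,q})^{(n-i)/q}|B|$, so
\[
|A\oplus_p B|=|B|\int_A(1-\|u\|_A^{\,q})^{(n-i)/q}\,du.
\]
Next I would use the standard polar formula $\int_A f(\|u\|_A)\,du=i|A|\int_0^1 f(\rho)\rho^{i-1}\,d\rho$ and substitute $w=\rho^q$ to obtain
\[
\int_A(1-\|u\|_A^{\,q})^{(n-i)/q}\,du=\frac{i|A|}{q}\int_0^1(1-w)^{(n-i)/q}w^{i/q-1}\,dw=\frac{i|A|}{q}\cdot\frac{\Gamma(i/q)\,\Gamma((n-i)/q+1)}{\Gamma(n/q+1)}.
\]

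Finally I would simplify using $(i/q)\Gamma(i/q)=\Gamma(i/q+1)$ and the definition $\binom{n/q}{i/q}=\Gamma(n/q+1)/\bigl[\Gamma(i/q+1)\Gamma((n-i)/q+1)\bigr]$, yielding $\int_A(1-\|u\|_A^{\,q})^{(n-i)/q}\,du=\binom{n/q}{i/q}^{-1}|A|$ and hence the claim. The only conceptual step is the first one — the Minkowski-gauge characterization of $A\oplus_p B$; everything afterward is a Fubini–polar–Beta computation. A minor technicality to handle is the case $0\in\partial A$ or $0\in\partial B$, where $\|\cdot\|_A$ or $\|\cdot\|_B$ may take the value $+\infty$ on some directions, but the formulas remain valid as the corresponding integrand vanishes.
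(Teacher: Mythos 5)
Your proof is correct, and the analytic route is genuinely different from the paper's. Both arguments hinge on the same geometric fact, namely that for $A\subset E$, $B\subset E^\perp$ the gauge of the $L_p$-sum decouples along the orthogonal splitting, $\|u+v\|_{A\oplus_p B}^q=\|u\|_A^q+\|v\|_B^q$. You obtain this directly from the Lutwak--Yang--Zhang parametric description of $A\oplus_p B$, which is quite transparent, whereas the paper deduces it through a polar/support-function identity. From there the methods diverge: the paper invokes the Laplace-type representation $|K|=\Gamma(1+n/q)^{-1}\int_{\R^m}e^{-\|x\|_K^q}\,dx$ for an $m$-dimensional body, so the product structure of $e^{-\|x_1\|_A^q-\|x_2\|_B^q}$ immediately factors the integral and no explicit Beta integral appears; you instead slice via Fubini and reduce by the polar/layer-cake formula to a one-variable Beta integral. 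Both end with the same $\Gamma$-function arithmetic, so the choice is largely aesthetic: the Laplace representation is slicker and avoids the change of variables, while your route is self-contained and requires only elementary integral identities. Your remark about the degenerate case $0\in\partial A$ or $0\in\partial B$ (where the gauge takes the value $+\infty$ and the corresponding slice is empty) is a correct and useful clarification.
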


\begin{proof}
    Using that $A$ and $B$ belong to orthogonal subspaces, we get that
    \[
        \|x\|_{A\oplus_p B}^q= \|x\|^q_{(A^\circ \oplus_q B^\circ)^\circ} = h^q_{A^\circ \oplus_q B^\circ} = h^q_{A^\circ } +h^q_{B^\circ } = \|x_1\|_{A}^q +\|x_2\|_{B}^q,
    \]
    for any $x\in \R^n$ and  $x_1 = P_E x,x_2 = P_{E^\perp} x$.
    Next, using that
    \begin{equation}\label{eq:volint}
    | K| =\Gamma\left( 1+\frac{n}{q}\right)^{-1} \int_0^\infty e^{-\|x\|_{K}^q} dx,
    \end{equation}
    for a convex body $K \subset \R^n$, containing the origin (see, for example, \cite{RZ}), we get
    \begin{align*}
        \left| A \oplus_p B \right| &= \frac{1}{\Gamma\left( 1+\frac{n}{q}\right)} \int_0^\infty e^{-\|x\|_{A\oplus_p B}^q} dx
        \\
        &= \frac{1}{\Gamma\left( 1+\frac{n}{q}\right)} \int_{E}  e^{-\|x_1\|_{A}^q} dx_1  \int_{E^\perp} e^{-\|x_2\|_{B}^q} dx_2
        \\
        &= 
        \frac{\Gamma(1+\frac{i}{q})\Gamma\left(1+\frac{n-i}{q}\right)}{\Gamma\left(1+\frac{n}{q}\right)}
        |A|| B|. \qedhere
    \end{align*}
\end{proof}

\begin{lemma}
    Let $A,B \subset \R^n_+$ be anti-blocking bodies. Then, for any $p\geq 1$,
    \begin{equation} \label{vol_lp_sum_2}
        |A\oplus_p - B|= \sum_{i=0}^n \sum_{E\in \GK[n][i]}  
        \binom{n/q}{i/q}^{-1} 
        |P_E A||P_{E^\bot} B|.
    \end{equation}
\end{lemma}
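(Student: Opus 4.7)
The plan is to establish an $L_p$-analog of the orthant decomposition (\ref{sum_ort}) for $\oplus_p$: namely, the (almost disjoint) union
\[
    A \oplus_p (-B) \;=\; \bigcup_{E \in \G} \bigl( P_E A \oplus_p (-P_{E^\perp} B) \bigr).
\]
Once this identity is in hand, the lemma follows directly: for each coordinate subspace $E$ of dimension $i$ the bodies $P_E A$ and $-P_{E^\perp} B$ lie in the orthogonal subspaces $E$ and $E^\perp$ respectively and contain the origin (recall $0\in A,B$ by anti-blocking), so (\ref{vol_lp_sum}) applied to them, together with $|-P_{E^\perp} B|=|P_{E^\perp} B|$, gives
\[
    \bigl| P_E A \oplus_p (-P_{E^\perp} B) \bigr| \;=\; \binom{n/q}{i/q}^{-1} |P_E A|\,|P_{E^\perp} B|,
\]
and summing over $E$ grouped by dimension yields (\ref{vol_lp_sum_2}).

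The inclusion $\supseteq$ is immediate, since anti-blocking gives $P_E A = A \cap E \subset A$ and $P_{E^\perp} B = B \cap E^\perp \subset B$, so any element of $P_E A \oplus_p (-P_{E^\perp} B)$ already lies in $A \oplus_p (-B)$. The substantive step is the reverse inclusion. Given $z = (1-t)^{1/q} x - t^{1/q} y$ with $x \in A$, $y \in B$, $t \in [0,1]$, I would let $E := \Span\{e_j : z_j \geq 0\}$ and set
\[
    x'_j := \begin{cases} z_j/(1-t)^{1/q}, & j \in E, \\ 0, & j \notin E, \end{cases}
    \qquad
    y'_j := \begin{cases} 0, & j \in E, \\ -z_j/t^{1/q}, & j \notin E, \end{cases}
\]
with the convention $0/0 = 0$ to absorb the endpoints $t\in\{0,1\}$. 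For $j\in E$ the inequality $(1-t)^{1/q}x_j - t^{1/q}y_j = z_j \geq 0$ together with $y_j\ge 0$ forces $0\leq x'_j \leq x_j$; symmetrically $0\leq y'_j \leq y_j$ on $E^\perp$. The anti-blocking hereditary property (an immediate consequence of $P_F K = K \cap F$ for every coordinate subspace $F$) then gives $x' \in A \cap E = P_E A$ and $y' \in B \cap E^\perp = P_{E^\perp} B$, and by construction $z = (1-t)^{1/q} x' + t^{1/q}(-y')$, placing $z$ into the summand indexed by $E$.

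For the almost disjointness, the summand indexed by $E$ is contained in the closed orthant where coordinates on $E$ are $\geq 0$ and those on $E^\perp$ are $\leq 0$; two distinct such orthants intersect only in coordinate hyperplanes, which are Lebesgue-null. The main (and really only) obstacle is the reverse inclusion, and that reduces, as above, to sign-pattern bookkeeping made to work by the anti-blocking hereditary property; once this is settled, assembling the volumes using (\ref{vol_lp_sum}) is a one-line calculation that produces the stated identity.
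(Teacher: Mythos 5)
Your proof is correct, but it takes a somewhat different route from the paper's. The paper establishes the same decomposition
\[
    A \oplus_p (-B) = \bigcup_{E \in \G} \bigl( P_E A \oplus_p (- P_{E^\perp} B) \bigr)
\]
not by an element-chasing argument, but by writing $A \oplus_p (-B) = \bigcup_{t\in[0,1]} \bigl((1-t)^{1/q} A - t^{1/q} B\bigr)$, applying the known Minkowski-sum decomposition \eqref{sum_ort} to the scaled bodies $(1-t)^{1/q} A$ and $t^{1/q} B$ (which remain anti-blocking), and then exchanging the order of the two unions. You instead re-prove the decomposition from scratch: for each $z = (1-t)^{1/q}x - t^{1/q}y$ you pick $E$ according to the sign pattern of $z$ and use the hereditary (down-set) property of anti-blocking bodies to truncate $x$ and $y$ into $P_E A$ and $P_{E^\perp} B$. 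This is essentially a direct re-derivation of \eqref{sum_ort} adapted to the $L_p$ setting, rather than a reduction to it. The trade-off is that the paper's argument is shorter because it recycles the cited identity, while yours is more self-contained and makes explicit the sign-pattern mechanism that underlies \eqref{sum_ort}. Both are valid; once the decomposition is in hand, the two proofs assemble the volume formula via \eqref{vol_lp_sum} in the same way. One small point worth stating explicitly (you rely on it): the hereditary property you invoke — $x \in A$, $0 \le x' \le x$ coordinatewise implies $x' \in A$ — does follow from the paper's definition of anti-blocking, by projecting one coordinate at a time and using convexity, but this step deserves a sentence since the paper's definition is phrased in terms of projections equalling sections rather than comprehensiveness.
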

\begin{proof}
    \begin{align*}
        A\oplus_p -B &= \bigcup_{t\in [0,1]} \left\{ (1-t)^{1/q}x+ t^{1/q}y : x\in A,y\in -B\right\}
        \\
        &=\bigcup_{t\in [0,1]} \left((1-t)^{1/q}A- t^{1/q}B\right).
    \end{align*}
    Therefore, from  \eqref{sum_ort}, 
    \begin{align*} 
    A\oplus_p -B &= \bigcup_{t\in [0,1]} \left((1-t)^{1/q}A- t^{1/q}B\right)\\
    &=\bigcup_{t\in [0,1]}  \bigcup_{E\in \G}\left( (1-t)^{1/q}P_E A - t^{1/q}P_{E^\bot}B\right)
    \\
    &=\bigcup_{E\in \G}\left( P_E A\oplus_p - P_{E^\bot} B\right).
    \end{align*}
    It follows that
    $$
        |A\oplus_p -B|= \sum_{i=0}^n\sum_{E\in \GK[n][i]}  |P_E A\oplus_p -P_{E^\bot}B|.
    $$
    We use  \eqref{vol_lp_sum}  to finish the proof.
\end{proof}

\begin{theorem} \label{ratiolp-thm}
    Let $p\geq 1$, $i\in [n-1]$ and $\nu(n,p,i)$ be the smallest constant such that for any anti-blocking bodies $A,B \subset \R^n_+$ and for any  coordinate subspace $E \in \GK[n][i]$ we have
    \begin{equation} \label{ratiolp}
      \frac{|A|}{|P_E A|} + \frac{|B|}{|P_E B|} \leq \nu(n,p,i) \frac{|A\oplus_p -B|}{|P_E (A\oplus_p -B)|}.
    \end{equation}
    Then, 
    $$
        \nu(n,p,i) 
        = 
        \underset{0\leq j \leq n}{\max }\, \left(
        d_{n,2} (i,j) \frac{\Gamma\left(1+\frac{i+j-n}{q}\right)\Gamma\left(1+\frac{n}{q}\right)}{\Gamma\left(1+\frac{i}{q}\right)\Gamma\left(1+\frac{j}{q}\right)}\right),
    $$
    where $d_{n,2}$ is defined in (\ref{eq:dnm}). 
\end{theorem}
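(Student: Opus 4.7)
The plan is to cross-multiply \eqref{ratiolp} and argue termwise after expanding both volumes via \eqref{vol_lp_sum_2}. Since the operation $(x,y,t)\mapsto (1-t)^{1/q}x+t^{1/q}y$ is linear in $(x,y)$, the projection $P_E$ commutes with $\oplus_p$, hence $P_E(A\oplus_p -B)=P_EA\oplus_p -P_EB$. Both $P_EA=A\cap E$ and $P_EB=B\cap E$ are anti-blocking inside $E$, so applying \eqref{vol_lp_sum_2} in $\R^n$ and in $E$ (of dimension $i$), and using $P_G(P_EA)=P_GA$ and $P_{G^\perp\cap E}(P_EB)=P_{G^\perp\cap E}B$ for coordinate $G\subset E$, produces
\begin{align*}
|A\oplus_p -B| &=\sum_{F\in \G}\binom{n/q}{|F|/q}^{-1}|P_FA|\,|P_{F^\perp}B|, \\
|P_E(A\oplus_p -B)| &=\sum_{G\in\G,\,G\subset E}\binom{i/q}{|G|/q}^{-1}|P_GA|\,|P_{G^\perp\cap E}B|.
\end{align*}

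Clearing denominators in \eqref{ratiolp}, it suffices to prove
\[
\bigl(|A||P_EB|+|B||P_EA|\bigr)|P_E(A\oplus_p -B)|\le \nu(n,p,i)\,|P_EA||P_EB|\cdot|A\oplus_p -B|,
\]
which I verify termwise in $G\subset E$. To each such $G$ of dimension $k\in[0,i]$ I attach two coordinate subspaces of $\R^n$: $F_1:=G\oplus E^\perp$ (so $F_1\cap E=G$ and $F_1^\perp=G^\perp\cap E$) to absorb the contribution of the $|A||P_EB|$ part, and $F_2:=G$ (so $F_2^\perp=G^\perp$) to absorb the $|B||P_EA|$ part. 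Because $E$ is proper, the families $\{F_1\}_G$ and $\{F_2\}_G$ are disjoint subsets of $\G$, so the remaining $F$-terms in $|A\oplus_p -B|$ are non-negative and can be discarded. Writing $I,J\subset[n]$ for the index sets of $E$ and $G$, after cancellations the two required pointwise inequalities per $G$ reduce, respectively, to the $m=2$ case of Corollary \ref{1-uni-prof} applied to the $1$-uniform covers $(\sigma_1,\sigma_2)=([n]\setminus I,\,I\setminus J)$ of $[n]\setminus J$ (yielding $|A||P_GA|\le d_{n,2}(i,k+n-i)|P_EA||P_{F_1}A|$) and $(\sigma_1,\sigma_2)=([n]\setminus I,\,J)$ of $([n]\setminus I)\cup J$ (yielding $|B||P_{G^\perp\cap E}B|\le d_{n,2}(i,n-k)|P_EB||P_{G^\perp}B|$).

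Substituting $j=k+n-i$ in the first inequality and $j=n-k$ in the second (each sweeping $j\in[n-i,n]$ as $k$ runs over $[0,i]$), and simplifying the binomial ratios by $\binom{a/q}{b/q}=\Gamma(1+a/q)/[\Gamma(1+b/q)\Gamma(1+(a-b)/q)]$ together with the symmetry $\binom{i/q}{k/q}=\binom{i/q}{(i-k)/q}$, each required constant collapses to
\[
d_{n,2}(i,j)\,\frac{\Gamma\!\left(1+\frac{i+j-n}{q}\right)\Gamma\!\left(1+\frac{n}{q}\right)}{\Gamma\!\left(1+\frac{i}{q}\right)\Gamma\!\left(1+\frac{j}{q}\right)}.
\]
Maximizing over $j\in[0,n]$ (with the convention $d_{n,2}(i,j)=0$ for $i+j-n<0$) recovers $\nu(n,p,i)$, which establishes the upper bound. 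For sharpness I would test the inequality on the anti-blocking analogue of the Hanner polytope from Remark \ref{HannerP} associated with a coordinate subspace $G^\star$ of dimension $k^\star=i+j^\star-n$ realizing the maximum, which achieves equality in the two invocations of Corollary \ref{1-uni-prof} simultaneously. The main technical obstacle I anticipate is the constant bookkeeping in this last step: verifying that the two distinct pairings $G\mapsto G\oplus E^\perp$ and $G\mapsto G$ yield the \emph{same} one-parameter family of constants across $j\in[n-i,n]$, so that a single scalar $\nu(n,p,i)$ controls both pieces and tightness is achieved by a common extremizer.
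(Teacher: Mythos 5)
Your upper-bound argument is essentially identical to the paper's: both clear denominators, expand $|A\oplus_p -B|$ and $|P_E(A\oplus_p -B)|$ via \eqref{vol_lp_sum_2}, and for each coordinate $G\subset E$ attach the two "partner" subspaces $G\oplus E^\perp$ and $G$ in the outer sum, one per term on the left, so that each term of $|A\oplus_p -B|$ is used at most once. After the binomial-coefficient bookkeeping each termwise inequality reduces to \eqref{projcontr} (the $m=2$ case of Corollary \ref{1-uni-prof}) with the constant $d_{n,2}(i,j)\,\Gamma(1+\frac{i+j-n}{q})\Gamma(1+\frac{n}{q})/[\Gamma(1+\frac{i}{q})\Gamma(1+\frac{j}{q})]$, and that part is correct. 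Your worry about the two pairings producing different constant families is unfounded; both collapse to the same expression, as you can check directly.

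The gap is in sharpness. Testing the inequality on a single anti-blocking Hanner polytope $K$ for both $A$ and $B$ will not give equality, because the termwise bounding discards all the $F$-terms of $|A\oplus_p -B|$ that are not of the form $G\oplus E^\perp$ or $G$ with $G\subset E$; these discarded terms are strictly positive for generic $A,B$, so the inequality is strict even if the two invocations of Corollary \ref{1-uni-prof} are both tight. To prove that $\nu(n,p,i)$ cannot be lowered, one needs a degenerating family, not a single extremizer: the paper takes $A=tK$ with $K$ the Hanner polytope of Remark \ref{HannerP} intersected with $\R^n_+$, and $B=U+\epsilon V$ with $U,V$ unit cubes in $H^\perp$ and $H$ for the subspace $H\in\GK[n][j]$, $H^\perp\subset E$, realizing the maximum. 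Sending $\epsilon\to 0$ kills all terms except those with $H\subset G$ in $|A\oplus_p -B|$ and $E\cap H\subset F$ in $|P_E(A\oplus_p -B)|$; then dividing by $t^{n-i}$ and sending $t\to 0$ isolates exactly the lowest-degree term, which is the term used in the bound, and \eqref{m-proj-eq} then forces $\nu(n,p,i)\ge d_{n,2}(i,j)\Gamma(1+\frac{i+j-n}{q})\Gamma(1+\frac{n}{q})/[\Gamma(1+\frac{i}{q})\Gamma(1+\frac{j}{q})]$. Without this two-parameter limiting construction your argument establishes only the upper bound, not that it is the best possible constant.
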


\begin{proof}
     Let 
    \[
        \beta := \underset{0\leq j \leq n}{\max }\, \left(
        d_{n,2} (i,j) \frac{\Gamma\left(1+\frac{i+j-n}{q}\right)\Gamma\left(1+\frac{n}{q}\right)}{\Gamma\left(1+\frac{i}{q}\right)\Gamma\left(1+\frac{j}{q}\right)}\right).
    \]
    To prove that $\nu(n,p,i) \leq \beta$, it's enough to show that
    \begin{equation*}            
            \left( \left|A\right|\left|P_E B\right| \right.+\left. \left|B\right|\left|P_E A\right|\right)\left|P_E (A\oplus_p -B)\right| 
            \\
            \leq d\left|P_E A\right|\left|P_E B\right|\left|A\oplus_p -B\right|.
    \end{equation*}
    Using \eqref{vol_lp_sum}, one has
    \begin{equation} \label{L1}
        \begin{split}
            |A|&|P_E B||P_E \left(A\oplus_p -B\right)|
            \\
            = &\sum_{k=0}^i \underset{F \subset E}{\underset{F\in \GK[n][k],}{\sum}} \left(\frac{\Gamma(1+\frac{k}{q})\Gamma\left(1+\frac{i-k}{q}\right)}{\Gamma\left(1+\frac{i}{q}\right)}|A||P_E B||P_F A||P_{F^\bot\cap E} B| \right),
        \end{split}
    \end{equation}
    also 
    \begin{equation}\label{L2}
        \begin{split}
            |B|&|P_E A||P_E (A\oplus_p -B)|
            \\ 
             = &\sum_{k=0}^i \underset{G \subset E}{\sum_{G\in \GK[n][k],}} \left( \frac{\Gamma(1+\frac{k}{q})\Gamma\left(1+\frac{i-k}{q}\right)}{\Gamma\left(1+\frac{i}{q}\right)}|B||P_E A||P_G A||P_{G^\bot \cap E} B|\right),
        \end{split}
    \end{equation}
    and
    \begin{equation}\label{L3}
        \begin{split}
            |P_E A|&|P_E B||A\oplus_p -B|
            \\
            = & \sum_{j=0}^n \sum_{H\in \GK[n][j]}  \left(\frac{\Gamma(1+\frac{j}{q})\Gamma\left(1+\frac{n-j}{q}\right)}{\Gamma\left(1+\frac{n}{q}\right)}|P_E A||P_E B||P_H A||P_{H^\bot} B| \right).
        \end{split}
   \end{equation}
Next,  we will bound each term of \eqref{L1} and \eqref{L2} by corresponding terms of \eqref{L3}. First fixing subspace $F$ in \eqref{L1} and taking  $H^\bot=F^\bot\cap E$ in \eqref{L3}, it is enough to prove that 
    \begin{equation}\label{eq:com1}
        \frac{\Gamma\left(1+\frac{i+j-n}{q}\right)}{\Gamma\left(1+\frac{i}{q}\right)}|A||P_F A| \leq \beta \frac{\Gamma\left(1+\frac{j}{q}\right)}{\Gamma\left(1+\frac{n}{q}\right)}|P_E A||P_H A|.
    \end{equation} 
    Next, repeating the same procedure for \eqref{L2} and \eqref{L3} with $H=G$, it is enough to show that
   \begin{equation}\label{eq:com2}       \frac{\Gamma\left(1+\frac{i-j}{q}\right)}{\Gamma\left(1+\frac{i}{q}\right)}|B||P_{G^\bot\cap E} B| \leq \beta \frac{\Gamma\left(1+\frac{n-j}{q}\right)}{\Gamma\left(1+\frac{n}{q}\right)}|P_E B||P_{H^\bot} B|.
   \end{equation} 
    We remark that we used terms of \eqref{L3} at most once.
    Using \eqref{projcontr}, we see that it is sufficient to take 
    $$
        \nu(n,p,i) \leq \beta.
    $$ 
    to satisfy inequalities (\ref{eq:com1}) and (\ref{eq:com2}).
    
 Let us now show that   $\beta$ is the best possible constant in the class of anti-blocking bodies. 
    Fix $j$, let \( H \in \GK[n][j]\) be such that \(H^\perp \subset E \).
    Let's observe when \(B = U + \epsilon V\), with \( U \) is a unit cube in \(H^\perp\) and \(V\) is a unit cube in \( H \). Let \(A = tK\), where \( t>0 \) and $K$ is an intersection of the Hanner polytope in Remark \ref{HannerP} and the positive orthant. Note that $K$ is an anti-blocking body satisfying  the equality case of inequality \eqref{m-proj}, i.e.,
    \begin{equation} \label{m-proj-eq}
        \frac{\left| K\right|\left| P_{E\cap H} K \right| }{\left| P_E K \right|\left| P_H K\right|} = d_{n,2}(i,j).
    \end{equation}
    Then, by equation \eqref{vol_lp_sum_2},
    \begin{align*}
        &\left| A\oplus_p -B \right| 
        \\
        &= \sum_{m=0}^n \sum_{G\in \GK[n][m]} \left( \frac{\Gamma(1+\frac{m}{q})\Gamma\left(1+\frac{n-m}{q}\right)}{\Gamma\left(1+\frac{n}{q}\right)}t^{m}|P_G K||P_{G^\perp} (U+\epsilon V)| \right).
    \end{align*}
    Note that
    \[
        |P_{G^\perp} U |_{n-m}= 
        \begin{cases}
            1 &\text{; if } H \subset G
            \\
            0 &\text{; otherwise }
        \end{cases},
    \]
    where by $|L|_i$ we denote the $i-$dimensional Lebesgue volume of $L$.
    If we take $ \epsilon = 0$, then the right hand side will be
    \[
        \beta_1 :=\sum_{m=j}^n \underset{H\subset G}{\sum_{G\in \GK[n][m], }} \left( \frac{\Gamma(1+\frac{m}{q})\Gamma\left(1+\frac{n-m}{q}\right)}{\Gamma\left(1+\frac{n}{q}\right)}t^{m}|P_G K| \right).
    \]
    Using \eqref{vol_lp_sum_2}, we have
    \begin{align*}
        &\left| P_E  (A\oplus_p -B) \right|
        \\
        & = \sum_{m=0}^i \underset{F \subset E}{\sum_{F\in \GK[n][m],}} \left( \frac{\Gamma(1+\frac{m}{q})\Gamma\left(1+\frac{i-m}{q}\right)}{\Gamma\left(1+\frac{i}{q}\right)} t^m|P_F K||P_{F^\bot\cap E} (U+\epsilon V)| \right).
    \end{align*}
    Taking $ \epsilon  = 0$, the terms in the right hand side that will remain in the sum are those where $E\cap H \subset F$, i.e.,
    \[
        \beta_2 := \sum_{m=i+j-n}^i \underset{E\cap H \subset F \subset E}{\underset{F\in \GK[n][m],}{\sum}} \left( \frac{\Gamma(1+\frac{m}{q})\Gamma\left(1+\frac{i-m}{q}\right)}{\Gamma\left(1+\frac{i}{q}\right)} t^m|P_F K| \right).
    \]
    By taking $\epsilon =0$ in \eqref{ratiolp}, we have
    \begin{align*}
        t^{n-i}\left(\frac{\left| K\right|}{\left| P_E K \right|} \right)
        &\leq \nu(n,p,i) \frac{\beta_1}{\beta_2}.
    \end{align*}
    Dividing the above inequality by \(t^{n-i} \) and taking \( t= 0\), we get equation
    \[
        \frac{\left| K\right|}{\left| P_E K \right|}  
        \leq \nu(n,p,i) 
        \frac{\Gamma(1+\frac{j}{q})\Gamma(1+\frac{i}{q})\left| P_H K\right| }{\Gamma(1+\frac{i+j-n}{q})\Gamma(1+\frac{n}{q})\left| P_{E\cap H} K \right|}.
    \]
    Using \eqref{m-proj-eq}, we have 
    \[
        d_{n,2}(i,j) \frac{\Gamma(1+\frac{i+j-n}{q})\Gamma(1+\frac{n}{q})}{\Gamma(1+\frac{j}{q})\Gamma(1+\frac{i}{q})} \leq \nu(n,p,i).
    \]
    The proof is completed by taking the supremum over $j$.
\end{proof}
The next corollary follows directly from  Theorem \ref{ratiolp-thm}.
\begin{corollary}
    Let $p\geq 1$ and $i \in [n-1]$. For any anti-blocking bodies $A,B \subset \R^n_+$, then, for any coordinate subspace $E \in \GK[n][i]$, we have
\[
\left(\frac{|A|}{|P_E A|}\right)^p + \left(\frac{|B|}{|P_E B|}\right)^p \leq \nu^p(n,p,i) \left(\frac{|A\oplus_p -B|}{|P_E (A\oplus_p -B)|}\right)^p.
\]

\end{corollary}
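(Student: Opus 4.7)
The plan is to deduce the corollary as a one-line consequence of Theorem \ref{ratiolp-thm}, combined with the elementary comparison between the $\ell^p$ and $\ell^1$ norms on $\R^2$. Set
\[
    a := \frac{|A|}{|P_E A|}, \qquad b := \frac{|B|}{|P_E B|}, \qquad Q := \frac{|A\oplus_p -B|}{|P_E(A\oplus_p -B)|},
\]
all of which are nonnegative.

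First, I would invoke the elementary inequality that, for $a,b\geq 0$ and $p\geq 1$,
\[
    a^p + b^p \leq (a+b)^p.
\]
This follows in one line: since $a\leq a+b$ and $b\leq a+b$, one has $a^p \leq a(a+b)^{p-1}$ and $b^p \leq b(a+b)^{p-1}$, and summing the two gives the claim. Equivalently, it is the norm comparison $\|\cdot\|_p \leq \|\cdot\|_1$ on $\R^2$.

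Second, I would apply Theorem \ref{ratiolp-thm}, which yields $a+b \leq \nu(n,p,i)\, Q$. Both sides are nonnegative, so raising to the $p$-th power preserves the inequality and produces $(a+b)^p \leq \nu^p(n,p,i)\, Q^p$. Chaining this with the first step gives
\[
    a^p + b^p \;\leq\; (a+b)^p \;\leq\; \nu^p(n,p,i)\, Q^p,
\]
which is precisely the stated inequality.

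There is no serious obstacle here, as the entire content lies in Theorem \ref{ratiolp-thm}. The only observation worth recording is that this route does not produce the best constant in the $L^p$ form: equality in $a^p + b^p \leq (a+b)^p$ requires $\min(a,b)=0$, which cannot happen for anti-blocking bodies with nonempty interior, so the constant $\nu^p(n,p,i)$ is not claimed to be sharp for the corollary, even though $\nu(n,p,i)$ is sharp in Theorem \ref{ratiolp-thm}.
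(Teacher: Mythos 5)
Your argument is correct and is precisely what the paper has in mind when it states that the corollary ``follows directly from Theorem \ref{ratiolp-thm}'': one combines the theorem with the elementary inequality $a^p+b^p\le (a+b)^p$ valid for $a,b\ge 0$ and $p\ge 1$. Your closing remark about the constant $\nu^p(n,p,i)$ not being claimed sharp here is a reasonable and accurate aside.
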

We borrow the idea from \cite{BLR-89} to prove  a generalization of \eqref{antproj2} to the case of $L_p-$sums.
Let $K$ be a convex body. For each coordinate hyperplane $e_i^\perp $, we define a symmetrization of $K$, $S_i(K)$ as follows: For each straight line $L$ orthogonal to $e_i^\perp$ such that $L \cap K \neq \emptyset $, shift the line segment $K \cap L$ along $L$ until the segment is in the positive half-space of hyperplane, with one of its endpoints on the hyperplane $e_i^\perp$.
More precisely,
\[
     S_i(K) = \left\{ x + ye_i : x\in P_{e_i^\perp} K, 0 \leq y \leq \left| K \cap (x + \R e_i) \right| \right\}.
\]
Note that, 
\begin{itemize}
    \item $S_i$ preserves volume, i.e. $ \left| S_i(K) \right| = \left| K \right|$.
    \item For $\lambda >0$, $S_i(\lambda A) = \lambda S_i (A)$.
    \item $S_{i} (K) \subset S_i (K')$ if $K \subset K'$.
\end{itemize}
The case of anti-blocking body, $S_i(K) = K$. 
\begin{lemma} \label{Sym-lp}
    Let $A,B $ be convex bodies. Then, $$S_i(A) \oplus_p S_i(B) \subset S_i(A \oplus_p B).$$
\end{lemma}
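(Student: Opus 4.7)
The plan is to prove the inclusion pointwise: take an arbitrary $z \in S_i(A) \oplus_p S_i(B)$ and exhibit a decomposition showing it lies in $S_i(A\oplus_p B)$. By definition, I would first write
\[
z = (1-t)^{1/q} x + t^{1/q} y, \qquad x\in S_i(A),\ y\in S_i(B),\ t\in[0,1],
\]
and decompose $x = x' + s\, e_i$, $y = y' + s'\, e_i$, where $x'\in P_{e_i^\perp} A$, $y'\in P_{e_i^\perp} B$, $0\le s \le \ell_A(x')$, and $0\le s' \le \ell_B(y')$, with $\ell_A(x') := |A\cap (x'+\mathbb{R} e_i)|$ and similarly for $\ell_B$.

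Then $z$ splits as $z = z' + r\, e_i$, where
\[
z' = (1-t)^{1/q} x' + t^{1/q} y', \qquad r = (1-t)^{1/q} s + t^{1/q} s'.
\]
Since $x'\in P_{e_i^\perp} A$ and $y'\in P_{e_i^\perp} B$, the point $z'$ lies in $P_{e_i^\perp} A \oplus_p P_{e_i^\perp} B$, which is contained in $P_{e_i^\perp}(A\oplus_p B)$ because projection commutes with the $L_p$-sum along an orthogonal direction. Also $r\ge 0$ because $t,1-t,s,s'\ge 0$. So the only thing remaining is to verify the upper bound $r \le \ell_{A\oplus_p B}(z')$, where $\ell_{A\oplus_p B}(z') := |(A\oplus_p B)\cap (z'+\mathbb{R} e_i)|$.

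This is the heart of the argument, and the main (though mild) obstacle. I would handle it by a direct one-dimensional observation: writing $A\cap (x'+\mathbb{R} e_i) = \{x' + u e_i : u \in [a_1,a_2]\}$ with $a_2-a_1 = \ell_A(x')$, and similarly $B\cap (y'+\mathbb{R} e_i) = \{y' + v e_i : v \in [b_1,b_2]\}$ with $b_2-b_1 = \ell_B(y')$, the points $(1-t)^{1/q}(x'+u e_i) + t^{1/q}(y'+v e_i)$ all belong to $A\oplus_p B$ and have common $e_i^\perp$-component $z'$; their $e_i$-coordinates $(1-t)^{1/q} u + t^{1/q} v$ fill the interval of length
\[
(1-t)^{1/q}\,\ell_A(x') + t^{1/q}\,\ell_B(y').
\]
Hence $\ell_{A\oplus_p B}(z') \ge (1-t)^{1/q}\ell_A(x') + t^{1/q}\ell_B(y') \ge (1-t)^{1/q} s + t^{1/q} s' = r$.

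Combining, $0 \le r \le \ell_{A\oplus_p B}(z')$ and $z'\in P_{e_i^\perp}(A\oplus_p B)$, so by definition of $S_i$, $z\in S_i(A\oplus_p B)$. Since $z$ was arbitrary, this proves the desired inclusion. No convexity of $A$ or $B$ is actually needed beyond being compact so that the sections are intervals; the properties of $S_i$ listed before the lemma (preservation of volume, homogeneity, monotonicity) are not directly invoked, though they motivate why such a slicewise argument works.
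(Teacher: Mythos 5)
Your proof is correct, but it takes a different (and more self-contained) route than the paper's. The paper's argument is a short reduction using the listed properties of $S_i$: it writes $S_i(A)\oplus_p S_i(B)=\bigcup_t\big(S_i(t^{1/q}A)+S_i((1-t)^{1/q}B)\big)$ via homogeneity, invokes the shift-subadditivity $S_i(K)+S_i(L)\subset S_i(K+L)$ for Minkowski sums (a standard Steiner/shifting fact, used but not stated explicitly as one of the bullet properties), and then uses monotonicity of $S_i$ to pass to the union $A\oplus_p B$. Your slicewise argument instead verifies the inclusion pointwise and, in effect, reproves that shift-subadditivity inline: you fix $t$, decompose along the $e_i$-direction, observe that the projection of $z$ lies in $P_{e_i^\perp}(A\oplus_p B)$, and lower-bound the fiber length $\ell_{A\oplus_p B}(z')$ by $(1-t)^{1/q}\ell_A(x')+t^{1/q}\ell_B(y')\ge r$. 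What the paper's route buys is brevity and reuse of general properties of $S_i$; what yours buys is that it actually fills the one step the paper leaves implicit, making the argument fully self-contained.

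One small imprecision at the end: the claim that convexity is not needed ``beyond sections being intervals'' is slightly off, since for a non-convex compact set the line section is not generally an interval. To drop convexity you would need the one-dimensional Brunn--Minkowski inequality $|U+V|\ge |U|+|V|$ for nonempty compact $U,V\subset\R$, rather than the ``intervals add'' observation. Since the lemma is stated for convex bodies this does not affect the validity of your proof.
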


\begin{proof}
    The proof follows from the definition of $L_p-$sums and the properties of the symmetrization:
    \begin{align*}
        S_i(A) \oplus_p S_i(B) &= \bigcup_{t\in [0,1]} \left(t^{1/q}S_i(A) + (1-t)^{1/q} S_i(B) \right)
        \\
        &= \bigcup_{t\in [0,1]} \left(S_i(t^{1/q}A) +  S_i((1-t)^{1/q}B)\right)
        \\
        &\subset S_i\left(\bigcup_{t\in [0,1]} \left(t^{1/q}A + (1-t)^{1/q}B\right)\right)
        \\
        &= S_i(A\oplus_p B). \qedhere
    \end{align*}
\end{proof} 

\begin{lemma} \label{RK-lp}
    Let $A,B \subset \R^n_+$ be anti-blocking bodies. Then,
    \begin{equation} \label{diff_lp_vol}
        \left| A \oplus_p B \right| \leq \left| A \oplus_p -B \right|.
    \end{equation}
\end{lemma}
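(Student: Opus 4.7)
The plan is to iterate Lemma \ref{Sym-lp} inside the right-hand side of \eqref{diff_lp_vol}, transforming $-B$ into $B$ one coordinate at a time via the symmetrizations $S_1,\ldots,S_n$. For $k = 0, 1, \ldots, n$, let $\delta^{(k)} \in \{-1,1\}^n$ be the sign vector with $\delta^{(k)}_j = +1$ for $j \le k$ and $\delta^{(k)}_j = -1$ for $j > k$, and set $B_k := \delta^{(k)} B$. Then $B_0 = -B$ and $B_n = B$.

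\textbf{Step 1.} The first step is to verify the identity $S_k(B_{k-1}) = B_k$. Since $B$ is anti-blocking, each fiber $B \cap (x + \R e_k)$, for $x \in P_{e_k^\perp}(B)$, is a segment $\{x + te_k : 0 \le t \le f_k(x)\}$ whose lower endpoint lies on $e_k^\perp$. Applying the reflection $\delta^{(k-1)}$, which sends $e_k$ to $-e_k$ because $\delta^{(k-1)}_k = -1$, transforms this into the segment $\{y + s e_k : -f_k(x) \le s \le 0\}$ whose upper endpoint $y := \delta^{(k-1)} x$ lies on $e_k^\perp$. The operator $S_k$ then shifts the segment along $e_k$ so that its lower endpoint sits at $y$, which is equivalent to negating the $k$-th coordinate of every point of $B_{k-1}$. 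Combined with the other coordinate flips already encoded in $\delta^{(k-1)}$, this yields exactly $B_k = \delta^{(k)} B$.

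\textbf{Step 2.} Since $A$ is anti-blocking we have $S_k(A) = A$ for every $k$. Applying Lemma \ref{Sym-lp} and Step 1,
\[
    A \oplus_p B_k \;=\; S_k(A) \oplus_p S_k(B_{k-1}) \;\subset\; S_k\bigl(A \oplus_p B_{k-1}\bigr).
\]
As $S_k$ is volume-preserving this gives $|A \oplus_p B_k| \le |A \oplus_p B_{k-1}|$. Chaining from $k=1$ to $k=n$,
\[
    |A \oplus_p B| \;=\; |A \oplus_p B_n| \;\le\; |A \oplus_p B_0| \;=\; |A \oplus_p -B|,
\]
which is \eqref{diff_lp_vol}.

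The main subtlety is Step 1: confirming that $S_k$ interacts with the coordinate reflection $\delta^{(k-1)}\mapsto\delta^{(k)}$ exactly as described. This hinges on the anti-blocking property, which guarantees that every $e_k$-parallel section of $B$ is a single segment anchored at $e_k^\perp$; once this is in place, Step 2 is a routine iteration of Lemma \ref{Sym-lp} using that $A$ is fixed by every $S_k$.
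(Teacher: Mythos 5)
Your proof is correct and follows essentially the same approach as the paper: iterate Lemma \ref{Sym-lp} through the symmetrizations $S_1,\ldots,S_n$, using that the anti-blocking body $A$ is fixed by each $S_k$ and that the composition carries $-B$ to $B$. The paper compresses the iteration into a single ``repeat the process'' step; you merely make the intermediate sets $B_k=\delta^{(k)}B$ and the identity $S_k(B_{k-1})=B_k$ explicit, which is a harmless expansion of the same argument.
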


\begin{proof}
    Using Lemma \ref{Sym-lp}, we have
    \[
        \left| A \oplus_p -B \right| = \left| S_n(A \oplus_p -B) \right| \geq \left| S_n(A) \oplus_p S_n(-B) \right| = \left| A \oplus_p S_n(-B) \right|.
    \]
    We repeat the process by applying $S_1, \ldots, S_{n-1}$ to get
    \[
        \left| A \oplus_p -B \right| \geq \left|A \oplus_p S_1 S_2 \cdots S_n (-B) \right|.
    \]
    We finish the proof using that $S_1 S_2 \cdots S_n (-B) = B$.
\end{proof} 

\begin{theorem} \label{m-plun-ruz-lp}
    Let $p \geq 1$ and $b(n,p)$ be the smallest constant such that for any anti-blocking bodies $A,B,C \subset \R^n_+$, the following statement holds:
    \begin{equation} \label{Plun-Ruz-lp}
        \left|A \right| \left|A\oplus_p -B \oplus_p -C \right| \leq b(n,p) \left|A\oplus_p -B\right|\left|A\oplus_p -C\right|
    \end{equation}
    Then,
    \[
        b(n,p) = \underset{0\leq i,j \leq n}{\max }\,
        \left(d_{n,2} (i,j) \frac{\Gamma\left(1+\frac{i+j-n}{q}\right)\Gamma\left(1+\frac{n}{q}\right)}{\Gamma\left(1+\frac{i}{q}\right)\Gamma\left(1+\frac{j}{q}\right)}\right),
    \]
    where $d_{n,2}(i,j)$ is defined in (\ref{eq:dnm}). 
\end{theorem}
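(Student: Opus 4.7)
The plan is to mirror the proof of Theorem \ref{m-upcn-sum} in the $L_p$-setting: replace the Minkowski decomposition \eqref{antproj} by the $L_p$-analogue \eqref{vol_lp_sum_2}, and apply Corollary \ref{1-uni-prof} to the projection factors of $A$ after an appropriate matching of combinatorial indices, with an extra Gamma-quotient correction.

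The first step is to reduce the triple $L_p$-sum to a double one. Direct computation shows that $\oplus_p$ is associative and that $(-B)\oplus_p(-C)=-(B\oplus_p C)$; moreover $D:=B\oplus_p C$ is itself anti-blocking, since projection commutes with $\oplus_p$ and for anti-blocking bodies projection equals section: $P_E(B\oplus_p C)=(P_EB)\oplus_p(P_EC)=(B\oplus_p C)\cap E$. Thus $|A\oplus_p -B\oplus_p -C|=|A\oplus_p -D|$, to which \eqref{vol_lp_sum_2} applies directly. In each resulting summand, rewrite $|P_{E^\perp}D|=|(P_{E^\perp}B)\oplus_p(P_{E^\perp}C)|$, use Lemma \ref{RK-lp} to flip a sign, and apply \eqref{vol_lp_sum_2} once more, this time in the ambient space $E^\perp$. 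This produces the upper bound
\begin{equation*}
    |A|\cdot|A\oplus_p -B\oplus_p -C|\leq \sum_{E\in\G}\sum_{F\subset E^\perp}\binom{n/q}{|E|/q}^{-1}\binom{(n-|E|)/q}{|F|/q}^{-1}|A||P_EA||P_FB||P_{E^\perp\cap F^\perp}C|.
\end{equation*}

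In parallel, expanding $|A\oplus_p -B|\cdot|A\oplus_p -C|$ twice via \eqref{vol_lp_sum_2} gives a double sum over $(G_1,G_2)\in\G\times\G$. The key combinatorial step is the matching $(E,F)\mapsto(G_1,G_2):=(F^\perp,\,E\cup F)$, which is an injection whose image consists of the pairs with $G_1^\perp\subset G_2$; crucially the $B$- and $C$-projection factors align exactly, namely $|P_FB|=|P_{G_1^\perp}B|$ and $|P_{E^\perp\cap F^\perp}C|=|P_{G_2^\perp}C|$. Since the unmatched terms on the right are non-negative, the inequality reduces to a purely geometric comparison involving only $A$: writing $i=|E|$, $j=|F|$,
\begin{equation*}
    |A||P_EA|\leq d_{n,2}(n-j,\,i+j)\,|P_{F^\perp}A||P_{E\cup F}A|,
\end{equation*}
which is precisely Corollary \ref{1-uni-prof} applied to the $1$-uniform cover $(F^\perp, E\cup F)$ of the complement of $E$ (noting $E=F^\perp\cap(E\cup F)$ since $E\subset F^\perp$ and $E\subset E\cup F$). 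A short manipulation of the Gamma quotients, combined with the reparametrization $i'=n-j$, $j'=i+j$ (so that $i'+j'-n=i$), identifies the resulting constant as exactly the quantity inside the maximum defining $b(n,p)$, proving the upper bound.

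For the sharpness direction, I would follow the extremal construction from Theorem \ref{ratiolp-thm}: take $A=tK$ with $K$ the intersection of the Hanner polytope of Remark \ref{HannerP} with $\R^n_+$ (which achieves equality in \eqref{m-proj} for the subspaces arising in the chosen matching), and choose $B$, $C$ as thin anti-blocking perturbations of unit cubes aligned with the coordinate subspaces realizing the maximum in the definition of $b(n,p)$. Letting $t$ and the perturbation parameter tend to $0$ isolates the matched extremal term on both sides and yields the matching lower bound. The main obstacle is Step 3, namely verifying that $(E,F)\mapsto(F^\perp, E\cup F)$ is the correct pairing of $B$- and $C$-factors, and tracking the Gamma quotients carefully so that they combine with $d_{n,2}(n-j,i+j)$ to reproduce the stated expression after the change of variables.
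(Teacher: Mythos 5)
Your proposal is correct and follows essentially the same route as the paper's own proof: expand $|A\oplus_p -B\oplus_p -C|$ by treating it as $|A\oplus_p -(B\oplus_p C)|$ via \eqref{vol_lp_sum_2}, flip the sign inside the projection factor using Lemma~\ref{RK-lp}, expand once more in the orthogonal complement, match each resulting term $(E,F)$ (with $F\subset E^\perp$) against the pair $(F^\perp, E+F)$ in the double expansion of $|A\oplus_p -B||A\oplus_p -C|$, discard the unmatched non-negative terms, and close with Corollary~\ref{1-uni-prof}; the sharpness construction (Hanner polytope in the positive orthant for $A=tK$, coordinate cubes for $B,C$, let $t\to 0$) is also the paper's. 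Two small remarks: the paper does not need the $\epsilon$-perturbation in the sharpness step for this theorem (it takes $B,C$ to be exact cubes $U,V$, unlike in Theorem~\ref{ratiolp-thm}), and in your argument that $B\oplus_p C$ is anti-blocking, the chain $(P_EB)\oplus_p(P_EC)=(B\oplus_p C)\cap E$ is the conclusion to be justified rather than an instance of ``projection equals section'' (one checks both inclusions directly by projecting the representation $z=(1-t)^{1/q}x+t^{1/q}y$ onto $E$), but this is exactly the fact the paper implicitly uses as well.
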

\begin{proof}
    We follow the proof of Theorem \ref{m-upcn-sum}.
    Let 
    \[
        \beta := \underset{0\leq i,j \leq n}{\max }\,
        \left(d_{n,2} (i,j) \frac{\Gamma\left(1+\frac{i+j-n}{q}\right)\Gamma\left(1+\frac{n}{q}\right)}{\Gamma\left(1+\frac{i}{q}\right)\Gamma\left(1+\frac{j}{q}\right)}\right),
    \]
    To prove that $b(n,p) \leq \beta$, it is enough to check that 
    \[
        |A ||A\oplus_p -B \oplus_p -C| \leq \beta |A\oplus_p -B||A\oplus_p -C|.
    \]
    Using \eqref{vol_lp_sum_2} and \eqref{diff_lp_vol}, we have
    \begin{align*}
        &\left| A\right|\left| A\oplus_p -B\oplus_p -C \right| 
        \\
        &= \left| A \right| \sum_{k=0}^n \sum_{E\in \GK[n][k]} \left( \frac{\GM[k] \GM[n-k]}{\GM[n]} \left|P_EA\right| \left|P_{E^\perp} (B\oplus_p C)\right| \right)
        \\
        &\leq \left| A\right|\sum_{k=0}^n \sum_{E\in \GK[n][k]}  \left( \frac{\GM[k] \GM[n-k]}{\GM[n]}\left|P_EA\right|\left| P_{E^\perp} B \oplus_p - P_{E^\perp}C\right| \right)
        \\
        &= \left| A \right| \sum_{k=0}^n \sum_{E\in \GK[n][k]} \sum_{l=0}^{n-k} \underset{ H \subset E^\perp }{\sum_{ H\in \GK[n][l],}}
        \left( \beta_1(k,l)\left|P_EA\right|\left| P_{H} B \right|\left| P_{(E+H)^\perp}C\right| \right),
    \end{align*}
    where 
    $$
        \beta_1(k,l) := \frac{\GM[k]  \GM[l] \GM[n-k-l]}{\GM[n] }.
    $$ 
    Using \eqref{vol_lp_sum_2}, we have
    \begin{equation} \label{sp-lp}
        \begin{split}
        &\left| A\oplus_p -B \right| \left| A \oplus_p -C \right| 
        \\
        &= 
        \sum_{i=0}^n \sum_{F\in \GK[n][i]}\sum_{j=0}^n \sum_{G\in \GK[n][j]}
        \left( \beta_2(i,j)\left| P_F A \right| \left| P_G A\right|\left| P_{F^\perp} B  \right| \left| P_{G^\perp} C \right| \right),
        \end{split}
    \end{equation}
    where
    \[
        \beta_2 (i,j):=\frac{\GM[i]\GM[n-i]\GM[j]\GM[n-j]}{\GM[n]^2}.
    \]
    It is enough to consider only the terms with \( F= H^\perp \) and \( G = E+H\) in the right hand side of \eqref{sp-lp}. By comparing term by term, it's enough to check that
    \begin{align*}
        &\beta_1 (i+j-n, n-i)\left| A \right| \left|P_{F \cap G}A\right|\leq \beta \cdot \beta_2 (i,j)\left| P_F A \right|\left| P_G A\right|,
    \end{align*}
    where \( F+G = \R^n\) and \( F \cap G = E\) which is true by the definition of $d$.

    Next, we will prove that $\beta$ is the best constant. 
    Fix $i,j$ such that $i+j-n \geq 0$, let $E\in\GK[n][i],F \in \GK[n][j]$ be such that $F+E = \R^n$. Let $ B =U,  C= V$ be unit cubes in $E^\perp $ and $ F^\perp$, respectively. Let $A = tK$ for $t >0$ and $K$ is an intersection of the Hanner polytope in Remark \ref{HannerP} and the positive orthant. Note that $K$ is an anti-blocking body satisfying  the equality case of inequality \eqref{m-proj}, i.e.,
    \begin{equation} \label{m-proj-eq-2}
        \frac{\left| K\right|\left| P_{E\cap H} K \right| }{\left| P_E K \right|\left| P_H K\right|} = d_{n,2}(i,j).
    \end{equation} 
    Using \eqref{vol_lp_sum_2}, we have
    \begin{equation} \label{m-plun-ruz-lp-lc}
        \begin{split}
           & |A \oplus_p -B \oplus_p -C | 
            \\
            = &\sum_{k=0}^n  \sum_{H\in \GK[n][k]} \left( \frac{\GM[k] \GM[n-k]}{\GM[n]} t^k \left|P_HK\right| \left|P_{H^\perp} (U\oplus_p V)\right|_{n-k} \right).
        \end{split}
    \end{equation}
    Since $V$ is a cube, we have
    \[
        \left|P_{H^\perp}  (U\oplus_p V)\right|_{n-k} 
        = \left|P_{H^\perp} U\oplus_p -P_{H^\perp} V\right|_{n-k}.
    \]
    Using \eqref{vol_lp_sum_2}, we can write $\left|P_{H^\perp} U\oplus_p -P_{H^\perp} V\right|_{n-k}$ as
    \begin{align*}
        \sum_{l=0}^{n-k}  \underset{L \subset H^\perp}{\sum_{L\in \GK[n][l], }} \left(\frac{\GM[l]\GM[n-k-l]}{\GM[n-k]} \left| P_LU\right|_l \left| P_{L^\perp \cap H^\perp} V \right|_{n-k-l} \right).
    \end{align*}
    Note that we only need to consider the case  $L = H^\perp \cap E^\perp$ and $E \cap F \subset H$, because at least one of  $\left| P_LU\right|_l$ and $\left| P_{L^\perp \cap H^\perp} V \right|_{n-k-l}$ is zero in all other cases. We will have only one term left, i.e.,
    \begin{equation} \label{m-plun-ruz-lp-lc2}
        \left|P_{H^\perp} U\oplus_p -P_{H^\perp} V\right|_{n-k} = \frac{\GM[k]\GM[l]}{\GM[n-k]},
    \end{equation}
    where $ l $ is the dimension of $ H^\perp \cap E^\perp$. Then, \eqref{m-plun-ruz-lp-lc} yields together with \eqref{m-plun-ruz-lp-lc2},
    \begin{equation}
        \begin{split}
        |A|&|A \oplus_p -B \oplus_p -C | 
        \\
        &= t^n|K|\sum_{k=i+j-n}^n  \underset{E\cap F \subset H}{\sum_{H\in \GK[n][k], }}
        \left( \beta_3(k,l) t^k\left|P_HK\right| \right),          
        \end{split}
    \end{equation}
    where 
    \[
        \beta_3(k,l) = \frac{\GM[k]\GM[l]\GM[n-k-l]}{\GM[n]}.
    \]
    Similarly, we have 
    \begin{equation} \label{m-plun-ruz-lp-lc3}
    \begin{split}
        &\left| A \oplus_p -B \right| \left| A \oplus_p -C \right|\\
        &= \sum_{m =i}^n \underset{F\subset M}{\sum_{M \in \GK[n][m], }} \sum_{r =j}^n \underset{E\subset R}{\sum_{R \in \GK[n][r], }}
        \left( \beta_4(m,r)
        t^{m+r} \left| P_MK \right| \left| P_RK\right| \right),
    \end{split}
    \end{equation}
    where
    \[  
        \beta_4(m,r) = \frac{\GM[m] \GM[n-m]\GM[r]\GM[n-r]}{\GM[n]^2}.
    \]
    Using \eqref{m-plun-ruz-lp-lc2} and \eqref{m-plun-ruz-lp-lc3} in \eqref{Plun-Ruz-lp}, by dividing the result by \(t^{i+j} \) and taking \( t= 0\), we get 
    \[
        \frac{\GM[i+j-n]\GM[n]}{\GM[i]\GM[j]}|K| |P_{E\cap F} K| \leq b(n,p)  \left| P_F K \right| \left| P_E K\right| .
    \]
    We are finished the proof using \eqref{m-proj-eq-2}.
\end{proof}

\begin{remark}
  An  upper and lower bounds for the constant $b(p,n)$ are provided in Proposition \ref{Plun-Ruz-lp-cal} below. One can also use a similar method as in the proof of Theorem \ref{m-upcn-sum} to generalize Theorem \ref{m-plun-ruz-lp} to the case of $m-$convex bodies: For any anti-blocking bodies $A,B_1,\ldots,B_m \subset \R^n_+$, one has 
    \[
        \left| A \right|^{m-1} \left| A\oplus_p \pm B_1 \oplus_p \ldots \oplus_p \pm B_m \right| \leq \varrho_{n,m} \prod_{i=1}^m \left| A \oplus_p - B_i \right|,
    \]
    where 
    \[
        \varrho_{n,m} =\max_{0\leq \alpha_i \leq n} \,
        \left( d_{n,m} (\alpha_1,\ldots,\alpha_m) \frac{\Gamma\left(1+\frac{N}{q}\right)\Gamma\left(1+\frac{n}{q}\right)^{m-1}}{\prod_{i=1}^m\Gamma\left(1+\frac{\alpha_i}{q}\right)} \right),
    \]
    where $d_{n,m}$ is defined in (\ref{eq:dnm}). Moreover, $\varrho_{n,m}$ is the best possible constant. 

\end{remark}




Next, in Lemma  \ref{RS-l-anti}, we will present a sharp constant for a Roger-Shephard type inequality for $L_p-$difference of a locally anti-blocking body. We will use  a method from  \cite{S-24}. It was proved in \cite{ASS-23} that for locally anti-blocking bodies $K,L$, one has
\[
    K+L = \bigcup_{\delta \in \{ -1,1 \}^n } \left(K_\delta + L_\delta \right),
\]
where $K_\delta = K \cap \delta \R^n_+$. The following is a very nice observation from \cite{S-24}.

\begin{lemma} \label{fact-anti}
    Let $K \subset \mathbb{R}^n$ be locally anti-blocking, and let $E:=\operatorname{span}\left\{e_i: i \in I\right\} $ for some $I \subset[n]$. Let $\tau, \delta \in\{-1,1\}^n$ be two orthant signs, such that $\left.\tau\right|_I=\left.\delta\right|_I$. Then,
    $$
        P_E K_\delta=P_E K_\tau .
    $$
\end{lemma}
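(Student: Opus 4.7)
The plan is to reduce the claim to the defining section-equals-projection property of anti-blocking bodies, exploiting the fact that $P_E$ only ``sees'' the coordinates indexed by $I$, together with the observation that a sign reflection by $\delta$ acts on a vector supported in $E$ through $\delta|_I$ alone.

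First I would set $B_\delta := \delta K_\delta = \delta K \cap \R^n_+$, which is anti-blocking by the local anti-blocking hypothesis on $K$, and likewise $B_\tau := \tau K_\tau$. Writing $K_\delta = \delta B_\delta$, a direct coordinatewise computation shows that the sign action commutes with $P_E$: for every $y \in \R^n$, both $P_E(\delta y)$ and $\delta(P_E y)$ equal the vector with entries $\delta_i y_i$ for $i \in I$ and $0$ for $j \notin I$. Hence
\[
    P_E K_\delta = \delta \cdot P_E B_\delta,
\]
and applying the anti-blocking property $P_E B_\delta = B_\delta \cap E$ turns this into $P_E K_\delta = \delta(B_\delta \cap E)$; similarly $P_E K_\tau = \tau(B_\tau \cap E)$.

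The heart of the argument is then a bookkeeping observation. For any $y \in E$ the coordinates $y_j$ vanish for $j \notin I$, so $\delta y$ depends only on $\delta|_I$; in particular $\delta y = \tau y$ whenever $\tau|_I = \delta|_I$. The same remark applies to the condition $y \in \R^n_+ \cap E$, which reduces to $y_i \geq 0$ for $i \in I$. Consequently
\[
    B_\delta \cap E = \{\, y \in E \cap \R^n_+ : \delta y \in K \,\}
\]
depends only on $\delta|_I$, so $B_\delta \cap E = B_\tau \cap E$. Combined with $\delta|_I = \tau|_I$, this gives $\delta(B_\delta \cap E) = \tau(B_\tau \cap E)$, i.e.\ $P_E K_\delta = P_E K_\tau$.

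I do not anticipate a real obstacle: the argument is essentially an unpacking of definitions. The only point requiring genuine care is the compatibility of the coordinate-sign action with $P_E$, which is transparent once everything is written out in coordinates.
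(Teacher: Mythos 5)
The paper states this lemma without proof, attributing it to Sadovsky's preprint \cite{S-24}, so there is no in-paper argument to compare against. Your proof is correct and self-contained: you correctly identify $B_\delta = \delta K_\delta = \delta K \cap \R^n_+$ as anti-blocking, use that $P_E$ commutes with the coordinate sign action to get $P_E K_\delta = \delta\,(P_E B_\delta) = \delta(B_\delta \cap E)$ via the section-equals-projection property, and then observe that both $B_\delta \cap E$ and the action of $\delta$ on vectors in $E$ depend only on $\delta|_I$, which yields the claim. This is exactly the kind of definition-unwinding one would expect for this statement, and no step has a gap.
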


\begin{lemma} \label{RS-l-anti}
    Let $K$ be a locally anti-blocking body. Then,
    \[
        | K \oplus_p -K | \leq \kappa_{n,q} |K|
    \]
    where
    \[
        \kappa_{n,q} = \sum_{i=0}^n \binom{n/q}{i/q}^{-1}  \binom{n}{i}^2.
    \]
    Moreover, $\kappa_{n,q}$ is the best possible constant.
\end{lemma}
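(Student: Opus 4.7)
The plan is to decompose $K \oplus_p -K$ orthant by orthant, expand each piece using formula \eqref{vol_lp_sum_2} for anti-blocking bodies, and finish with Rogers--Shephard \eqref{lb-RS}. Noting that $-K$ is locally anti-blocking with $(-K)_\delta = -K_{-\delta}$, and combining the identity $K+L = \bigcup_\delta K_\delta + L_\delta$ from \cite{ASS-23} with the LYZ representation of $\oplus_p$, we get
\[ K \oplus_p -K = \bigcup_{\delta \in \{-1,1\}^n} K_\delta \oplus_p -K_{-\delta}. \]
Since both $K_\delta$ and $-K_{-\delta}$ lie in the convex cone $\delta\mathbb{R}^n_+$, so does their $L_p$-sum, hence the union is almost disjoint. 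Setting $A_\delta := \delta K_\delta \subset \mathbb{R}^n_+$ (anti-blocking), the identities $K_\delta = \delta A_\delta$ and $-K_{-\delta} = \delta A_{-\delta}$ yield
\[ |K \oplus_p -K| = \sum_\delta |A_\delta \oplus_p A_{-\delta}|. \]

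Next, I would bound each summand by applying Lemma \ref{RK-lp} to the anti-blocking pair $(A_\delta, A_{-\delta})$ and then expand through \eqref{vol_lp_sum_2}. After interchanging the order of summation, for each coordinate subspace $E \in \GK[n][i]$ one is left with $\sum_\delta |P_E A_\delta| |P_{E^\perp} A_{-\delta}|$. By Fact \ref{fact-anti}, $|P_E A_\delta|$ depends only on $\delta|_{I_E}$ while $|P_{E^\perp} A_{-\delta}|$ depends only on $(-\delta)|_{I_E^c}$, so the $\delta$-sum factors as a product of two independent sums. Since the projections $P_E K_\delta$, as $\delta|_{I_E}$ varies, lie in distinct orthants of $E$ and exhaust $P_E K$, the two factors evaluate to $|P_E K|$ and $|P_{E^\perp} K|$ respectively, giving
\[ |K \oplus_p -K| \le \sum_{i=0}^n \binom{n/q}{i/q}^{-1} \sum_{E \in \GK[n][i]} |P_E K| |P_{E^\perp} K|. \]

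Finally, the same factorization argument, now pairing $\delta$ with itself rather than with $-\delta$, gives the identity $|P_E K||P_{E^\perp} K| = \sum_\delta |P_E A_\delta||P_{E^\perp} A_\delta|$. Since $A_\delta$ is anti-blocking, its maximal section perpendicular to $E$ is $A_\delta \cap E^\perp = P_{E^\perp} A_\delta$, so \eqref{lb-RS} yields $|P_E A_\delta||P_{E^\perp} A_\delta| \le \binom{n}{i}|A_\delta|$; summing over $\delta$ gives $|P_E K||P_{E^\perp} K| \le \binom{n}{i}|K|$. Using $|\GK[n][i]| = \binom{n}{i}$ and substituting produces $|K \oplus_p -K| \le \kappa_{n,q}|K|$. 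For sharpness, the coordinate simplex $\{x \in \mathbb{R}^n_+ : \sum x_i \le 1\}$ works: it is anti-blocking (so only the all-plus orthant contributes), Lemma \ref{RK-lp} is an equality because the relevant $A_\delta$ and $A_{-\delta}$ lie in orthogonal coordinate subspaces so that \eqref{vol_lp_sum} applies exactly, and Rogers--Shephard is tight on the simplex. The main obstacle is the double application of Fact \ref{fact-anti} --- once to identify the inner $\delta$-sum with $|P_E K||P_{E^\perp} K|$, and once more to convert this projection product back into $\sum_\delta |P_E A_\delta||P_{E^\perp} A_\delta|$ before invoking Rogers--Shephard --- and the orthant bookkeeping must be tracked carefully throughout.
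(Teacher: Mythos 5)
Your proof is correct and follows essentially the same route as the paper's: the orthant decomposition $K \oplus_p -K = \bigcup_\delta K_\delta \oplus_p -K_{-\delta}$, the symmetrization bound of Lemma \ref{RK-lp}, the expansion via \eqref{vol_lp_sum_2}, a reindexing via Lemma \ref{fact-anti}, and Rogers--Shephard \eqref{lb-RS}, with sharpness on the simplex. The only cosmetic difference is in the reindexing step (you factor $\sum_\delta |P_E A_\delta||P_{E^\perp}A_{-\delta}|$ through the intermediate $|P_E K||P_{E^\perp}K|$ and re-expand, while the paper constructs the explicit bijection $\tau \mapsto \delta(E,\tau)$ directly), and the parenthetical ``only the all-plus orthant contributes'' in your sharpness discussion is misleading---for the simplex the lower-dimensional pieces $A_\delta, A_{-\delta}$ lie in complementary coordinate subspaces, so $A_\delta \oplus_p A_{-\delta}$ is full-dimensional and does contribute---though the sharpness claim itself is correct, as a direct check with \eqref{vol_lp_sum_2} confirms.
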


\begin{proof}
    For any locally anti-blocking bodies $K,L$, we have
    \begin{align*}
        K \oplus_p  L &= \bigcup_{t \in [0,1]} \left((1-t)^{1/q}  K + t^{1/q} L \right)
        \\
        &= \bigcup_{t \in [0,1]} \bigcup_{\delta \in \{-1,1\}^n} \left( (1-t)^{1/q}  K_\delta + t^{1/q} L_\delta \right)
        \\
        &=  \bigcup_{\delta \in \{-1,1\}^n} \bigcup_{t \in [0,1]} \left( (1-t)^{1/q}  K_\delta + t^{1/q} L_\delta \right)
        \\
        &=  \bigcup_{\delta \in \{-1,1\}^n} \left( K_\delta \oplus_p L_\delta \right).
    \end{align*}
    Note that sets  $ K_\delta \oplus_p L_\delta,$ in the above union,  are disjoint up to a set of measure zero, so we have 
    \[
        \left| K \oplus_p - K \right| 
        = \sum_{\delta \in \{ -1,1\} ^n } \left| K_\delta \oplus_p (-K)_\delta \right|.
    \]
    Using \eqref{diff_lp_vol}, since $(-K)_\delta = -(K_{-\delta})$, we have
    \begin{align*}
        \left| K \oplus_p - K \right| 
        &= \sum_{\delta \in \{ -1,1\} ^n } \left| K_\delta \oplus_p (-K)_\delta \right|
        \\
        &= \sum_{\delta \in \{ -1,1\} ^n } \left| K_\delta \oplus_p -(K_{-\delta}) \right|
        \\
        &\leq \sum_{\delta \in \{ -1,1\} ^n } \left| K_\delta \oplus_p K_{-\delta} \right|.
    \end{align*}
    Next,
    \begin{align*}
        \sum_{\delta \in \{ -1,1\} ^n } \left| K_\delta \oplus_p K_{-\delta} \right|
        &= \sum_{\delta \in \{ -1,1\} ^n } \sum_{i=0}^n \sum_{E \in \GK[n][i]} \binom{n/q}{i/q}^{-1}
        \left| P_EK_\delta\right| \left| P_{E^\perp} K_{-\delta} \right|.
    \end{align*}
    Fixing $E = \Span\{e_i: i\in I\}\in \GK[n][i]$, for each $\delta\in \{-1,1\}^n$, we claim that there exists 
    $\tau \in \{ -1,1\}^n$ such that 
    \begin{equation} \label{con-fact-anti}
        P_E K_\delta = P_E K_\tau \text{ and } P_{E^\perp} K_{-\delta} = P_{E^\perp} K_\tau.
    \end{equation}
    Moreover, it is a bijection. 
    Indeed, the map $\tau \longmapsto \delta(E,\tau)$ where $\delta = \delta(E,\tau)$ is unique vector  satisfying
    \[
        \left.\delta\right|_I=\left.\tau\right|_I,\left.\quad \delta\right|_{I^c}=-\left.\tau\right|_{I^c}.
    \]
    Applying Lemma \ref{fact-anti}, we get \eqref{con-fact-anti}.
    Using \eqref{con-fact-anti}, \eqref{lb-RS}, we have
    \begin{align*}
        \left| K \oplus_p -K \right| &\leq \sum_{i=0}^n   \sum_{E \in \GK[n][i]} \binom{n/q}{i/q}^{-1}
        \sum_{\tau \in \{ -1,1\} ^n } \left| P_EK_\tau\right| \left| P_{E^\perp} K_\tau \right|
        \\
        &\leq \sum_{i=0}^n  \sum_{E \in \GK[n][i]} \binom{n/q}{i/q}^{-1}
        \sum_{\tau \in \{ -1,1\} ^n } \binom{n}{i} \left| K_\tau \right|
        \\
        &= \sum_{i=0}^n  \sum_{E \in \GK[n][i]} \binom{n/q}{i/q}^{-1}
        \binom{n}{i} \left| K \right|
        \\
        &= \sum_{i=0}^n  \binom{n/q}{i/q}^{-1}
        \binom{n}{i}^2 \left| K \right|.
    \end{align*}
    Note that the constant is sharp when $K$ is a simplex.
\end{proof}
We note that it follows from Lutwak's Brunn-Minkowski inequality for $L_p-$sums \cite{L-93, LYZ-12} that $| K \oplus_p -K | \geq 2^{n/p} |K|.$ Using   comparison between $L_p-$sums and Minkowski sums together with classical Rogers-Shephard inequality one can get that $\kappa_{n,q} \leq \binom{2n}{n}$.  One can also get this estimate directly using the properties of Binomial coefficients, see  Proposition \ref{kappa-cal} below. 

It is a very natural question to ask for equality cases of Lemma  \ref{RS-l-anti}:\\

\noindent{\bf Conjecture:} The  equality  in Lemma  \ref{RS-l-anti} holds if and  only if $K$ is a simplex.

\section{The case of  unconditional product measures}
In this section, we will extend our result to an unconditional, product measure $\mu = \bigotimes_{i=1}^n \mu_i$ with decreasing density which is $1/n-$concave on 1-unconditional convex bodies.

We note that  the proof of \eqref{BT-int}  works if the volume is replaces by a product measure with decreasing density, i.e., for any compact set $K$, $\lambda-$uniform cover $(\sigma_1,\ldots,\sigma_m)$ of $[n]$, we have
\begin{equation} \label{BT-sm}
    \mu(K)^\lambda \leq \prod_{i=1}^m \mu_{\sigma_i} \left( P_{E_{\sigma_i}^\perp} K \right),
\end{equation}
where $\mu_{\sigma_i} = \underset{k \in \sigma_i}{\bigotimes} \mu_k$. 

The proof of the next theorem directly follows from the proof of Theorem \ref{LM-uni-cov} together with \eqref{BT-sm}, Theorem \ref{uncon-prod-m} and \eqref{Berwald-ine-s}. 
It is important to note that our result applies to the class of 1-unconditional convex bodies, where the measure is $1/n-$concave, as established in Theorem \ref{uncon-prod-m}.
\begin{theorem}
    Let $\sigma \subset [n]$. Fix $m \geq \lambda \geq 1$ and let $(\sigma_1, \ldots, \sigma_m)$ be $\lambda-$uniform cover of $\sigma$. For any 1-unconditional convex body $K \subset \R^n$, one has
    \begin{equation} \label{L-LM-unif-s}
        \mu\left( K \right)^{m-\lambda} \mu_{\sigma^c}\left(P_{E_\sigma} K \right)^\lambda \leq 
        \frac{\prod_{i=1}^m \binom{n- |\sigma_i|}{n-|\sigma|} }{\binom{n}{n-|\sigma|}^{m-\lambda}} \prod_{i=1}^m \mu_{\sigma_i^c}\left( P_{E_{\sigma_i}} K \right),
    \end{equation}
    where $\sigma^c = [n]\smallsetminus \sigma$.
\end{theorem}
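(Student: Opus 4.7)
The plan is to mimic the proof of Theorem~\ref{LM-uni-cov} step by step, replacing each Lebesgue ingredient with its $s$-concave analogue: the Brunn--Minkowski inequality \eqref{eq:BM} by Theorem~\ref{uncon-prod-m}, the Bollob\'as--Thomason inequality \eqref{BT-int} by the product-measure version \eqref{BT-sm}, and the classical Berwald inequality \eqref{Berwald-ine} by its $s$-concave counterpart \eqref{Berwald-ine-s}. The case $m=\lambda$ forces $\sigma_i=\sigma$ for every $i$ and both sides of \eqref{L-LM-unif-s} collapse to $\mu_{\sigma^c}(P_{E_\sigma}K)^{m}$, so it suffices to treat $m>\lambda$.

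For each $y\in P_{E_\sigma}K$ I would keep the familiar sections
\[
K(y)=\{t\in E_\sigma^\perp:\,y+t\in K\},\qquad K_i(y)=\{t\in E_\sigma^\perp\cap E_{\sigma_i}:\,y+t\in P_{E_{\sigma_i}}K\},
\]
so that Fubini with respect to $\mu=\mu_\sigma\otimes\mu_{\sigma^c}$ gives $\mu(K)=\int_{P_{E_\sigma}K}\mu_\sigma(K(y))\,d\mu_{\sigma^c}(y)$. Since $(\sigma\smallsetminus\sigma_1,\ldots,\sigma\smallsetminus\sigma_m)$ is an $(m-\lambda)$-uniform cover of $\sigma$, applying \eqref{BT-sm} inside $E_\sigma^\perp$ yields $\mu_\sigma(K(y))^{m-\lambda}\le\prod_{i=1}^m\mu_{\sigma\smallsetminus\sigma_i}(K_i(y))$. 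Setting $f_i(y):=\mu_{\sigma\smallsetminus\sigma_i}(K_i(y))^{1/(|\sigma|-|\sigma_i|)}$ and applying H\"older with exponents $r_i=|\sigma|(m-\lambda)/(|\sigma|-|\sigma_i|)$ (which satisfy $\sum 1/r_i=1$ thanks to the cover identity $\sum|\sigma_i|=\lambda|\sigma|$) produces
\[
\mu(K)^{m-\lambda}\le\prod_{i=1}^m\left(\int_{P_{E_\sigma}K}f_i^{|\sigma|}\,d\mu_{\sigma^c}\right)^{(|\sigma|-|\sigma_i|)/|\sigma|}.
\]

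The principal obstacle, which did not arise in the Lebesgue proof, is that Theorem~\ref{uncon-prod-m} grants $1/n$-concavity of $\mu$ only on \emph{$1$-unconditional} bodies, whereas $f_i$ involves sections of a projection rather than a projection itself. I would remove this obstruction by the following observation: since $K$ is $1$-unconditional, $P_{E_{\sigma_i}}K$ is $1$-unconditional in $E_{\sigma_i}$, and each fiber $K_i(y)$ is itself $1$-unconditional in $E_\sigma^\perp\cap E_{\sigma_i}$; indeed every index $j\in\sigma\smallsetminus\sigma_i$ lies in $\sigma_i^c$, so flipping any coordinate $t_j\mapsto -t_j$ preserves membership in $P_{E_{\sigma_i}}K$. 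Combining the convex inclusion $tK_i(y_1)+(1-t)K_i(y_2)\subseteq K_i(ty_1+(1-t)y_2)$ with Theorem~\ref{uncon-prod-m} applied to the $1$-unconditional bodies $K_i(y_1),K_i(y_2)$ in dimension $|\sigma|-|\sigma_i|$ yields exactly the $1/(|\sigma|-|\sigma_i|)$-concavity needed to conclude that $f_i$ is concave on $P_{E_\sigma}K$.

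With concavity of $f_i$ in hand, I would invoke \eqref{Berwald-ine-s} on the $1$-unconditional body $P_{E_\sigma}K$, on which $\mu_{\sigma^c}$ is $1/(n-|\sigma|)$-concave by Theorem~\ref{uncon-prod-m}, with parameters $p=|\sigma|-|\sigma_i|$, $q=|\sigma|$ and $1/s=n-|\sigma|$; this produces precisely the ratio $\binom{n-|\sigma_i|}{n-|\sigma|}/\binom{n}{n-|\sigma|}$ that appeared in the Lebesgue proof. A second application of Fubini identifies $\int f_i^{|\sigma|-|\sigma_i|}\,d\mu_{\sigma^c}=\mu_{\sigma_i^c}(P_{E_{\sigma_i}}K)$ via the set identity $(\sigma\smallsetminus\sigma_i)\cup\sigma^c=\sigma_i^c$. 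Multiplying the resulting bounds over $i$ and simplifying reconstructs exactly the constant $\prod_i\binom{n-|\sigma_i|}{n-|\sigma|}\big/\binom{n}{n-|\sigma|}^{m-\lambda}$ of \eqref{L-LM-unif-s}.
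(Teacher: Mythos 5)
Your proposal is correct and follows exactly the route the paper indicates (the paper itself only states that the result ``directly follows from the proof of Theorem~\ref{LM-uni-cov} together with \eqref{BT-sm}, Theorem~\ref{uncon-prod-m} and \eqref{Berwald-ine-s}''). The one thing you do beyond the paper's sketch is to verify explicitly that the fibers $K_i(y)$ are $1$-unconditional (and that $P_{E_\sigma}K$ is, too), which is exactly the check needed to legitimately invoke Theorem~\ref{uncon-prod-m} in place of Brunn--Minkowski; the paper only flags the $1$-unconditionality restriction without carrying out this verification, so your elaboration is a genuine and welcome tightening of the argument.
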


\begin{corollary}
    The inequality \eqref{L-LM-unif-s} holds for the class of anti-blocking bodies.
\end{corollary}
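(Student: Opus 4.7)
The plan is to reduce the anti-blocking case to the 1-unconditional case already proved above. Given an anti-blocking body $K \subset \R^n_+$, I would associate to it the unconditional hull
\[
\widetilde K := \{x \in \R^n : (|x_1|,\ldots,|x_n|) \in K\} = \bigcup_{\delta \in \{-1,1\}^n} \delta K.
\]
The first step is to check that $\widetilde K$ is a 1-unconditional convex body. Unconditionality is immediate from the definition, while convexity uses the anti-blocking hypothesis: for $x,y\in\widetilde K$ and $t\in[0,1]$, one has $|tx_i+(1-t)y_i|\le t|x_i|+(1-t)|y_i|$ coordinate-wise, and the right-hand vector lies in $K$ by convexity of $K$, so the left-hand vector lies in $K$ by the downward-closedness of anti-blocking bodies in $\R^n_+$. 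Hence $\widetilde K$ is eligible for the theorem just proved.

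Next, I would translate all quantities in \eqref{L-LM-unif-s} between $\widetilde K$ and $K$ using the unconditionality of $\mu$. Because the $2^n$ reflections $\delta K$ cover $\widetilde K$ with pairwise measure-zero overlaps and each satisfies $\mu(\delta K)=\mu(K)$, we get $\mu(\widetilde K)=2^n\mu(K)$. For a coordinate subspace $E_\tau$, I would observe that for any 1-unconditional convex body $L$ one has $P_{E_\tau}L = L\cap E_\tau$ (apply unconditionality to reflect the normal component and then take the midpoint by convexity), which combined with the anti-blocking identity $P_{E_\tau}K = K\cap E_\tau$ yields $P_{E_\tau}\widetilde K = \widetilde{P_{E_\tau}K}$. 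Consequently,
\[
\mu_{\tau^c}\!\bigl(P_{E_\tau}\widetilde K\bigr) = 2^{\,n-|\tau|}\,\mu_{\tau^c}\!\bigl(P_{E_\tau}K\bigr).
\]

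Applying the 1-unconditional version of \eqref{L-LM-unif-s} to $\widetilde K$ and substituting these identities, the left-hand side gains a factor $2^{n(m-\lambda)+(n-|\sigma|)\lambda}=2^{nm-\lambda|\sigma|}$, while the right-hand side gains a factor $2^{\sum_{i=1}^m(n-|\sigma_i|)}=2^{nm-\sum_i|\sigma_i|}$. The $\lambda$-uniform cover condition gives $\sum_{i=1}^m|\sigma_i|=\lambda|\sigma|$, so the two powers of $2$ coincide and cancel, leaving exactly \eqref{L-LM-unif-s} for $K$.

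The only nontrivial point in this plan is the convexity of $\widetilde K$, which relies essentially on the anti-blocking property; the rest is bookkeeping with powers of $2$ that is forced to work out by the uniform cover condition.
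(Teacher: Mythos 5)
Your reduction via the unconditional hull $\widetilde K = \bigcup_{\delta\in\{-1,1\}^n}\delta K$ is correct; the paper states the corollary without proof, and this is the natural argument. All the claims you use hold: convexity of $\widetilde K$ does follow from the downward-closedness of anti-blocking bodies (which is equivalent to the projection-equals-section definition used in the paper, by the midpoint trick), $\mu(\widetilde K)=2^n\mu(K)$ since $\mu$ is unconditional and the $2^n$ reflections overlap only on coordinate hyperplanes, $P_{E_\tau}\widetilde K$ equals the unconditional hull of $P_{E_\tau}K$ inside $E_\tau$, and the power-of-two bookkeeping closes precisely because $\sum_i|\sigma_i|=\lambda|\sigma|$ for a $\lambda$-uniform cover. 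Since the corollary is for anti-blocking bodies (not locally anti-blocking), the constraint $K\subset\R^n_+$ guarantees the $\delta K$ are a.e.\ disjoint, so the counting of factors of $2$ is exact rather than merely an upper bound; this is where anti-blocking (as opposed to a general subset of $\R^n_+$) genuinely enters twice, once for convexity of $\widetilde K$ and once via $P_{E_\tau}K = K\cap E_\tau$. No gaps.
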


The following theorem follows from the proof of Theorem  \ref{L1-sum} and inequality  \eqref{L-LM-unif-s}.

\begin{theorem}\label{L1-sum-gaussian}
    Let \( A,B \subset \R^n_+\) be anti-blocking bodies. Then, for any proper coordinate subspace \( E \in \GK[n][i] \),
    \begin{equation} \label{ratiofp-gaussian}
        \frac{\mu(A)}{\mu_{\sigma_i^c} (P_E A)} +  \frac{\mu ( B)}{\mu_{\sigma_i^c} (P_E B )} \leq r_{n,i} \left(  \frac{\mu( A - B)}{\mu_{\sigma_i^c}( P_E(A - B) )} \right),
    \end{equation}
    where $r_{n,i}$ is defined in Theorem \ref{L1-sum}.
\end{theorem}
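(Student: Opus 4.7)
The plan is to adapt the argument used to prove Theorem~\ref{L1-sum} (which is the case $p=1$ of Theorem~\ref{ratiolp-thm}) by replacing Lebesgue volume throughout with the unconditional product measure $\mu$ and its marginals $\mu_{\sigma^c}$, and by substituting the local Loomis--Whitney inequality \eqref{projcontr} with its measure-theoretic counterpart coming from \eqref{L-LM-unif-s} (specialized to $m=2$, $\lambda=1$), which applies to anti-blocking bodies by the Corollary just preceding Theorem~\ref{L1-sum-gaussian}.

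The first step is to upgrade \eqref{antproj} to a measure identity. Since the decomposition \eqref{sum_ort} is almost-disjoint with respect to Lebesgue measure, it is also almost-disjoint with respect to $\mu$; combining this with Fubini and the product/unconditional structure of $\mu$ yields
\[
    \mu(A-B)=\sum_{\sigma\subset[n]} \mu_{\sigma^c}(P_{E_\sigma}A)\,\mu_\sigma(P_{E_\sigma^\perp}B),
\]
and the analogous identity for $\mu_{\sigma_i^c}(P_E(A-B))$ obtained by restricting the sum to coordinate subspaces $F\subset E$, where $E=E_{\sigma_i}$ is the prescribed $i$-dimensional subspace.

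The second step mimics the term-by-term comparison carried out in the proof of Theorem~\ref{ratiolp-thm}. Expand each of the three products $\mu(A)\,\mu_{\sigma_i^c}(P_E B)\,\mu_{\sigma_i^c}(P_E(A-B))$, $\mu(B)\,\mu_{\sigma_i^c}(P_E A)\,\mu_{\sigma_i^c}(P_E(A-B))$, and $\mu_{\sigma_i^c}(P_E A)\,\mu_{\sigma_i^c}(P_E B)\,\mu(A-B)$ via the decomposition from step one, and pair each term on the left (indexed by $F=E_\eta\subset E$) with the term on the right indexed by $H=E_\tau$ satisfying $H^\perp=F^\perp\cap E$ (equivalently, $\eta=\sigma_i\cup\tau$ as a disjoint union). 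For each such pair, the key estimate
\[
    \mu(A)\,\mu_{\eta^c}(P_F A)\leq d_{n,2}(i,\dim H)\,\mu_{\sigma_i^c}(P_E A)\,\mu_{\tau^c}(P_H A)
\]
(together with its analog for $B$) is precisely the case $(m,\lambda)=(2,1)$ of \eqref{L-LM-unif-s} applied to the $1$-uniform cover $(\sigma_i,\tau)$ of $\eta$. Summing these paired inequalities after multiplying by the common cross-factors produces \eqref{ratiofp-gaussian}.

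The main obstacle is purely combinatorial: one must check that the pairing of terms between the three expansions matches the $1$-uniform cover hypothesis of \eqref{L-LM-unif-s}, and that the binomial factors in the definition of $d_{n,2}$ align correctly. Since this bookkeeping is already done in the Lebesgue proof of Theorem~\ref{ratiolp-thm}, and Fubini for the product measure $\mu$ reproduces the multiplicative behavior of volumes on orthogonal coordinate subspaces, no new analytic difficulty arises and the same constant $r_{n,i}=\max_{0\leq j\leq n}d_{n,2}(i,j)$ is recovered.
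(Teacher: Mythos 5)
Your proposal is correct and follows the same route the paper indicates. The paper's proof is the one-line remark that the theorem follows from the proof of Theorem~\ref{L1-sum} together with inequality~\eqref{L-LM-unif-s}; your argument spells that out by replacing~\eqref{antproj} with its product-measure analogue, replacing~\eqref{projcontr} with the $(m,\lambda)=(2,1)$ case of~\eqref{L-LM-unif-s} (valid for anti-blocking bodies by the preceding corollary), and running the same term-by-term pairing as in the $p=1$ case of Theorem~\ref{ratiolp-thm}.
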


\begin{lemma}
    Let $K$ be a locally anti-blocking body. Then, for any $i$,
    \begin{equation} \label{s-shift}
        \mu( S_i(K) ) \leq \mu( K ),
    \end{equation}
\end{lemma}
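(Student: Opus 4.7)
The plan is to foliate $K$ by lines parallel to $e_i$, reducing the claim to a fiberwise one-dimensional comparison that exploits both the local anti-blocking structure of $K$ and the unconditional, radially non-increasing nature of each marginal density of $\mu$.

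First, I would apply Fubini using the product decomposition $\mu = \bigotimes_{j=1}^n \mu_j$. Writing $\phi_i$ for the density of $\mu_i$ and $\mu' = \bigotimes_{j\neq i}\mu_j$, this gives
\[
\mu(K) = \int_{P_{e_i^\perp} K} \Bigl(\,\int_{\{t\,:\,x+te_i\in K\}}\!\!\phi_i(t)\,dt\Bigr)\, d\mu'(x).
\]
By the very definition of $S_i$, one has $P_{e_i^\perp} S_i(K) = P_{e_i^\perp} K$ and the fiber of $S_i(K)$ over $x$ equals $[0,L(x)]\cdot e_i$ with $L(x):=|K\cap (x+\R e_i)|$, which gives an analogous Fubini expression for $\mu(S_i(K))$. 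Hence it suffices to verify the fiberwise inequality $\int_0^{L(x)}\phi_i(t)\,dt \le \int_{\{t\,:\,x+te_i\in K\}}\phi_i(t)\,dt$ for each $x \in P_{e_i^\perp} K$.

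The second and key step is to pin down the location of each fiber: I claim that the fiber $\{t : x+te_i \in K\}$ is always an interval of the form $[-a,b]$ with $a,b\ge 0$ (so that $L(x) = a+b$). Given $t_0$ with $x+t_0 e_i \in K$, choose $\delta \in \{-1,1\}^n$ so that $x+t_0 e_i \in \delta\R^n_+$; then $x+t_0 e_i \in K_\delta$, and since $\delta K_\delta = \delta K\cap \R^n_+$ is anti-blocking and hence coordinatewise down-closed, the point obtained from $\delta\cdot(x+t_0 e_i)$ by setting its $i$-th coordinate to $0$ — which equals $\delta\cdot x$ because $\langle x,e_i\rangle=0$ — still lies in $\delta K_\delta$. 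Thus $x \in K_\delta \subset K$, so $t=0$ belongs to the fiber, and by convexity of $K$ the fiber is a single interval containing $0$. This is the only place where local anti-blockingness enters, and I expect it to be the main technical point to present cleanly.

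Once the fiber is identified, the remaining one-dimensional inequality reads, using evenness of $\phi_i$,
\[
\int_0^{a+b}\phi_i(t)\,dt \;\le\; \int_0^a \phi_i(t)\,dt + \int_0^b \phi_i(t)\,dt.
\]
Assuming without loss of generality $a\le b$, I would split $\int_0^{a+b} = \int_0^b + \int_b^{a+b}$ and translate: $\int_b^{a+b}\phi_i(t)\,dt = \int_0^a \phi_i(b+s)\,ds\le \int_0^a \phi_i(s)\,ds$ by monotonicity of $\phi_i$ on $[0,\infty)$ together with $b\ge 0$. Combining with the Fubini reduction closes the argument. I note in passing that this proof uses only the product structure together with the evenness and radial monotonicity of each $\mu_i$, and in particular does not invoke the $1/n$-concavity of $\mu$ from Theorem \ref{uncon-prod-m}.
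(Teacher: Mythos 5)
Your proof is correct and takes essentially the same route as the paper: a Fubini decomposition along lines parallel to $e_i$, followed by a one-dimensional fiberwise comparison that uses evenness and radial monotonicity of the marginal density $\phi_i$. You go slightly further than the paper in that you actually justify, via local anti-blockingness and down-closedness of $\delta K_\delta$, the assertion that each nonempty fiber $K\cap(x+\R e_i)$ contains the origin, a fact the paper's proof merely states ("there exist $t_1\le 0\le t_2$") without argument.
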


\begin{proof}
    Without loss of generality, assume that $ i =n$. For each $x \in P_{e_n^\perp} K$, there exist $t_1\leq 0 \leq t_2 $ such that $$
        K \cap (x + \R e_n) = \{ x+ te_n : t_1 \leq t \leq t_2 \}.
    $$ 
    This implies that
    \[
        S_n(K) \cap (x + \R e_n) = \{ x + te_n : 0 \leq t \leq t_2 -t_1 \}.
    \]
    Then,
    \begin{align*}
        \mu_n( K \cap (x + \R e_n) )  &= \int_{t_1}^{t_2} \phi(x_1,\ldots,x_{n-1},t) d \mu_{n} (t)
        \\
        &\geq \int_{0}^{t_2-t_1} \phi(x_1,\ldots,x_{n-1},t) d \mu_{n} (t)
        \\
        &\geq \mu_n (S_i(K) \cap (x + \R e_i)).
    \end{align*}
    Thus,
    \begin{align*}
        \mu (K) &= \int_{P_{e_n^{\perp}} K} \mu_n( K \cap (x + \R e_i) ) d \mu_{n^c}
        \\
        &\geq  \int_{P_{e_n^{\perp}} K} \mu_n (S_i(K) \cap (x + \R e_i)) d \mu_{n^c} 
        \\
        &=\mu(S_n(K)). \qedhere
    \end{align*}
\end{proof} 

\begin{lemma} \label{difference-sum-s}
    Let $A,B \subset \R^n_+$ be anti-blocking bodies. Then,
    \[
        \mu( A+B)  \leq \mu(A-B).
    \]
\end{lemma}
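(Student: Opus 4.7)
The plan is to mimic the symmetrization argument of Lemma \ref{RK-lp}, replacing the volume-preservation property of the coordinate symmetrization $S_i$ by the inequality \eqref{s-shift}, which conveniently points in the correct direction $\mu(S_i K) \leq \mu(K)$ for our purposes. First, I would combine \eqref{s-shift} in direction $e_n$ with Lemma \ref{Sym-lp} specialized to $p = 1$ (which gives the Minkowski-sum inclusion $S_n(A) + S_n(-B) \subset S_n(A - B)$) and with the identity $S_n(A) = A$ for the anti-blocking body $A$, obtaining
\[
\mu(A - B) \;\geq\; \mu\bigl(S_n(A - B)\bigr) \;\geq\; \mu\bigl(A + S_n(-B)\bigr).
\]

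Next, I would iterate, applying $S_{n-1}, S_{n-2}, \ldots, S_1$ in turn to the right-hand summand and chaining the analogous inequalities at each stage. After the $n$-th iteration, one reaches
\[
\mu(A - B) \;\geq\; \mu\bigl(A + S_1 S_2 \cdots S_n (-B)\bigr) \;=\; \mu(A + B),
\]
where the last equality uses that $S_1 S_2 \cdots S_n(-B) = B$: each symmetrization $S_i$ flips the $i$-th coordinate of $-B \subset \mathbb{R}^n_-$ back to the positive half-line, and by the anti-blocking property of $B$ the final body coincides with $B$.

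The main obstacle is verifying that \eqref{s-shift} is applicable at every stage of the iteration. Although \eqref{s-shift} is stated for locally anti-blocking bodies, its proof only uses the weaker geometric hypothesis that every coordinate slice meeting the body contains the origin. For the intermediate body $A + S_{k+1} \cdots S_n(-B)$, this property can be verified in direction $e_k$ as follows: the body $S_{k+1}\cdots S_n(-B)$ is the reflection of $B$ through the coordinate axis $\operatorname{span}\{e_{k+1}, \ldots, e_n\}$, hence is locally anti-blocking, so given any point $a + c$ of the Minkowski sum one can simultaneously push $a_k$ to $0$ inside $A$ (using that $A$ is anti-blocking and therefore downward-closed in its orthant) and $c_k$ to $0$ inside $S_{k+1}\cdots S_n(-B)$ (using its anti-blocking property in the relevant orthant); the resulting point $a' + c' \in A + S_{k+1}\cdots S_n(-B)$ has its $k$-th coordinate equal to $0$. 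This yields the needed hypothesis and the induction closes, completing the argument.
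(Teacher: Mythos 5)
Your proof follows the same symmetrization argument as the paper: apply $S_n, S_{n-1}, \ldots, S_1$ in succession, using \eqref{s-shift} together with Lemma \ref{Sym-lp} specialized to $p=1$ at each stage, and conclude via $S_1 \cdots S_n(-B) = B$. The one genuine difference is that you explicitly verify, for each intermediate body $A + S_{k+1}\cdots S_n(-B)$, the slicing hypothesis that the proof of \eqref{s-shift} actually needs in direction $e_k$ (the paper applies \eqref{s-shift} without comment, implicitly relying on the fact that Minkowski sums of locally anti-blocking bodies remain locally anti-blocking); your check is correct and closes that small gap cleanly.
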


\begin{proof}
    Using \eqref{s-shift}, we have
    \[
        \mu( A -B ) \geq \mu( S_n(A -B) ) \geq \mu( S_n(A) + S_n(-B) ) = \mu( A + S_n(-B) ).
    \]
    Repeat the process by applying $S_1, \ldots, S_{n-1}$, we will get
    \[
        \mu( A -B ) \geq \mu(A + S_1 S_2 \cdots S_n (-B) ).
    \]
    We end the proof here since $S_1 S_2 \cdots S_n (-B) = B$.
\end{proof} 

Next  theorem  can be proved using the  the proof of Theorem \ref{m-upcn-sum} together with Lemma \ref{difference-sum-s} and \eqref{L-LM-unif-s}.

\begin{theorem}
    Let $A,B_1,\ldots,B_m \subset \R^n_+$ be anti-blocking bodies. Then,
    \begin{equation} \label{Plunn-Nec-s}
        \mu(A)^{m-1} \mu(A\pm B_1 \pm B_2\pm \cdots \pm B_m) \leq \zeta_{n,m} \prod_{i=1}^m \mu\left( A - B_i \right),
    \end{equation}
    where $\zeta_{n,m}$ is defined in Corollary \ref{corant}.
\end{theorem}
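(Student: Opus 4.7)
The plan is to transport the proof of Theorem~\ref{m-upcn-sum} from Lebesgue measure to $\mu$ one ingredient at a time. That proof rests on three tools: the orthant decomposition \eqref{antproj}, the Minkowski-sum upper bound \eqref{upbd-anti-blocking}, and the local Loomis--Whitney inequality \eqref{m-proj}. The first transfers to $\mu$ without modification: since \eqref{sum_ort} is an almost-disjoint union and $\mu$ has a density, we obtain
\[
    \mu(A-B)=\sum_{E\in\G}\mu_{E}(P_E A)\,\mu_{E^\perp}(P_{E^\perp}B).
\]
The third transfers via the $1-$uniform case of \eqref{L-LM-unif-s}, which holds for anti-blocking bodies by the Corollary immediately following \eqref{L-LM-unif-s}: for any $1-$uniform cover $(\sigma_1,\ldots,\sigma_m)$ of $\sigma\subset[n]$ with $\alpha_i=\dim E_{\sigma_i}$,
\[
    \mu(A)^{m-1}\mu_{\sigma^c}(P_{E_\sigma}A)\leq d_{n,m}(\alpha_1,\ldots,\alpha_m)\prod_{i=1}^m\mu_{\sigma_i^c}(P_{E_{\sigma_i}}A)\leq\zeta_{n,m}\prod_{i=1}^m\mu_{\sigma_i^c}(P_{E_{\sigma_i}}A).
\]

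For the second tool, I would reproduce the induction of \eqref{upbd-anti-blocking} verbatim, substituting Lemma~\ref{difference-sum-s} for \eqref{antproj2} at the inductive step. Since projections of anti-blocking bodies onto coordinate subspaces remain anti-blocking and the product measure restricts compatibly to each coordinate subspace, Lemma~\ref{difference-sum-s} applies to each projected pair. The outcome is the $\mu$-analog of \eqref{upbd-anti-blocking}:
\[
    \mu(A_1+\cdots+A_r)\leq\sum_{(\sigma_1,\ldots,\sigma_r)}\prod_{i=1}^r\mu_{\sigma_i}(P_{E_{\sigma_i}^\perp}A_i),
\]
summed over $1-$uniform covers of $[n]$.

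With these three ingredients, the proof of Theorem~\ref{m-upcn-sum} carries over line for line: expand $\mu(A-B_1-\cdots-B_m)$ via the $\mu$-decomposition, bound the interior factor $\mu_\sigma(P_{E_\sigma^\perp}(B_1+\cdots+B_m))$ by the analog of \eqref{upbd-anti-blocking} over $1-$uniform covers of $\sigma$, apply the local Loomis--Whitney estimate above to each $\mu(A)^{m-1}\mu_{\sigma^c}(P_{E_\sigma}A)$, and regroup the resulting double sum into $\zeta_{n,m}\prod_{i=1}^m\mu(A-B_i)$ by recognizing each factor via the $\mu$-decomposition. The general $\pm$ version follows by writing $A\pm B_1\pm\cdots\pm B_m=(A+C)-D$, where $C$ and $D$ are the (anti-blocking) Minkowski sums of those $B_i$ carrying positive, respectively negative, signs, and applying the same argument to $\mu((A+C)-D)$; the extra projection factors of $C$ and $D$ can be expanded in the $B_i$ via the analog of \eqref{upbd-anti-blocking} and regrouped back into $\prod_i\mu(A-B_i)$.

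The main obstacle is essentially bookkeeping: one must check that $\mu_\sigma$ acting on $P_{E_\sigma^\perp}$ and $\mu_{\sigma^c}$ acting on $P_{E_\sigma}$ tensor together correctly along each coordinate decomposition so that the double sum telescopes exactly into $\prod_{i=1}^m\mu(A-B_i)$. This works precisely because $\mu=\bigotimes_{i=1}^n\mu_i$ is a product measure, so the orthogonal-coordinate factor in each term of the $\mu$-expansion splits cleanly; no estimate beyond the ones already invoked in Sections~3 and~6 is required.
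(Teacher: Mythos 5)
Your proposal is correct and takes exactly the route the paper indicates in its one-line remark: transfer the proof of Theorem~\ref{m-upcn-sum} to $\mu$ by replacing \eqref{antproj2} with Lemma~\ref{difference-sum-s}, \eqref{antproj} with its product-measure analogue (valid because $\mu=\bigotimes\mu_i$ is a product measure and unconditional, so $\mu_{E^\perp}(-P_{E^\perp}B)=\mu_{E^\perp}(P_{E^\perp}B)$), and \eqref{m-proj} with the $\lambda=1$ case of \eqref{L-LM-unif-s}. The handling of the $\pm$ signs is also sound: applying the $\mu$-analogue of \eqref{upbd-anti-blocking} separately to $P_{E_\tau}(A+C)$ and $P_{E_\tau^\perp}D$ and combining the two $1$-uniform covers produces the same intermediate bound $\sum_{\sigma}\sum_{(\sigma_i)}\mu_{\sigma^c}(P_{E_\sigma}A)\prod_i\mu_{\sigma_i}(P_{E_{\sigma_i}^\perp}B_i)$ that arises in the all-minus case, so local Loomis--Whitney and the final regrouping go through verbatim.
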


\begin{remark} We note that  constants in inequalities \eqref{L-LM-unif-s}, \eqref{ratiofp-gaussian} and  \eqref{Plunn-Nec-s} are sharp. Indeed, those inequalities 
are invariant under  multiplication of  the density $\phi$ of $\mu$ by a positive constant. Thus we may assume that $\phi(0)=1$. Next, taking  anti-blocking bodies, belonging to a small enough Euclidean ball centered  at the origin, using the continuity of  $\phi$ we get that the constants in \eqref{L-LM-unif-s}, \eqref{ratiofp-gaussian} and  \eqref{Plunn-Nec-s} can not be better than the constants in the corresponding inequalities for Lebesgue measure.
\end{remark}

\section{Appendix}
In this section, our aim is to present the optimization of the constants along with some more technical calculations.

The first calculation is a specific example for the equality case in \eqref{L-LM-unif}.  
\begin{lemma} \label{Cal-LM}
    The equality in  inequality \eqref{L-LM-unif} can be reached by using the following Hanner polytope,
    \[
        K = \Conv\left\{ \sum_{i \in \sigma} [-e_i,e_i], \sum_{i \not\in \sigma } [-e_i,e_i] \right\} ,
    \]
    where $\sigma_1, \ldots, \sigma_n$ is a $\lambda-$uniform cover of $\sigma$.
\end{lemma}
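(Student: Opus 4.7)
The plan is to compute both sides of inequality \eqref{L-LM-unif} directly for the Hanner polytope $K$ and observe that they coincide. The first step is to recognize the structure of $K$: the summand $Q_\sigma := \sum_{i \in \sigma}[-e_i,e_i]$ is the cube $[-1,1]^{|\sigma|}$ sitting in the coordinate subspace $E_\sigma^\perp$, while $Q_{\sigma^c} := \sum_{i \notin \sigma}[-e_i,e_i]$ is the cube $[-1,1]^{n-|\sigma|}$ sitting in the orthogonal coordinate subspace $E_\sigma$. Thus $K = \Conv(Q_\sigma \cup Q_{\sigma^c})$ is the free sum of these two cubes.

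I would next compute $|K|$. A convenient way is to observe that $x \in K$ if and only if $\|x_\sigma\|_\infty + \|x_{\sigma^c}\|_\infty \leq 1$, where $x_\sigma$ and $x_{\sigma^c}$ denote the components of $x$ in the two subspaces; passing to the radial variables $u = \|x_\sigma\|_\infty$ and $v = \|x_{\sigma^c}\|_\infty$ reduces the volume to a Beta integral, yielding
\[
    |K| = \binom{n}{|\sigma|}^{-1} 2^n.
\]
For the projection onto $E_\sigma$, note that $P_{E_\sigma} Q_\sigma = \{0\}$ while $Q_{\sigma^c} \subset E_\sigma$ is fixed, hence $P_{E_\sigma} K = Q_{\sigma^c}$ and $|P_{E_\sigma} K| = 2^{n-|\sigma|}$.

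For the projections onto the $E_{\sigma_i}$, I would use that projection commutes with convex hulls and that $\sigma_i \subset \sigma$. The projection $P_{E_{\sigma_i}}$ kills exactly the coordinates indexed by $\sigma_i$, so $P_{E_{\sigma_i}} Q_\sigma$ is the cube indexed by $\sigma \setminus \sigma_i$, and $P_{E_{\sigma_i}} Q_{\sigma^c} = Q_{\sigma^c}$. Hence $P_{E_{\sigma_i}} K$ is again a free sum of two cubes in orthogonal subspaces of $E_{\sigma_i}$ (of dimensions $|\sigma| - |\sigma_i|$ and $n - |\sigma|$), and the same free-sum volume formula applied inside $E_{\sigma_i}$ gives
\[
    |P_{E_{\sigma_i}} K| = \binom{n - |\sigma_i|}{n-|\sigma|}^{-1} 2^{n-|\sigma_i|}.
\]

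The last step is bookkeeping: plug the three volumes into \eqref{L-LM-unif}. The factors $\binom{n-|\sigma_i|}{n-|\sigma|}$ in $|P_{E_{\sigma_i}} K|$ cancel the identical factors in the prefactor of the right-hand side, the powers of $\binom{n}{|\sigma|}$ match since $\binom{n}{n-|\sigma|} = \binom{n}{|\sigma|}$, and the total power of $2$ on the left is $n(m-\lambda) + (n-|\sigma|)\lambda = nm - \lambda|\sigma|$, while on the right it is $\sum_{i=1}^m (n-|\sigma_i|) = nm - \sum_i |\sigma_i|$. These agree by the defining property $\sum_{i=1}^m |\sigma_i| = \lambda |\sigma|$ of a $\lambda$-uniform cover, which is really the only substantive input and the one identity that forces the match. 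The calculation is routine, the only mild obstacle being careful tracking of the binomial and exponential factors.
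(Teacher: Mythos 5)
Your proposal is correct and follows essentially the same route as the paper's proof: identify $K$ and each projection $P_{E_{\sigma_i}}K$ as free sums of axis-parallel cubes in complementary coordinate subspaces, compute the three volumes via the free-sum volume formula $\bigl|\Conv(A\cup B)\bigr| = \binom{n}{\dim A}^{-1}|A|\,|B|$, and then verify the bookkeeping using $\sum_i|\sigma_i|=\lambda|\sigma|$. The only cosmetic difference is that you derive the free-sum volume formula by a layer-cake/Beta-integral computation in the $\|\cdot\|_\infty$ radial variable, whereas the paper derives it by factorizing the exponential integral $|K|=\frac{1}{n!}\int e^{-\|x\|_K}\,dx$; both are routine and equivalent.
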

\begin{proof}
    Without loss of generality, assume that $ \sigma = \{1, \ldots , |\sigma|\} $. Using (\ref{eq:volint}) we get 
    \begin{align*}
        \left| K \right| 
        &= \frac{1}{n!} \int_{\R^n} e^{-\| x\|_K} dx 
        \\
        &= \frac{1}{n!} \int_{\R^n} e^{-\| x_1\|_{B_\infty^{|\sigma|}}}  dx_1  \int_{\R^n} e^{-\| x_2\|_{B_\infty^{n-|\sigma|}}}  dx_2 
        \\
        &= \binom{n}{n-|\sigma|}^{-1} \left| B_\infty^{|\sigma|} \right|\left| B_\infty^{n-|\sigma|} \right|  = \binom{n}{n-|\sigma|}^{-1}2^{n}.
    \end{align*}
    Similarly, we have
    \[
        \left| P_{E_{\sigma_i}} K \right| = \binom{n-|\sigma_i|}{n-|\sigma|}^{-1}2^{n-|\sigma_i|}
   \mbox{ and } 
        \left| P_{E_{\sigma}} K \right| = 2^{n-|\sigma|}.
    \]
    Then,
    \[
        \frac{\left| K \right|^{m-\lambda} \left|P_{E_\sigma} K \right|^\lambda}{\prod_{i=1}^m \left| P_{E_{\sigma_i}} K \right|} = \frac{\prod_{i=1}^m \binom{n-|\sigma_i|}{n-|\sigma|} }{\binom{n}{n-|\sigma|}^{m-\lambda}}. \qedhere
    \]
\end{proof}
We refer to \cite{AA-65}, pp 254-259, for the basic properties of the digamma function.
The digamma function is the logarithm derivative of the Gamma function. In particular, if 
\[
    \psi (z) = \frac{d }{dz} \ln \Gamma(z) = \frac{\Gamma'(z)}{\Gamma (z)},
\]
then
\begin{equation} \label{digamma-recursion}
    \psi (z+1) =  \psi (z) + \frac{1}{z},
\end{equation}
and, for any $ z \neq -1,-2, \ldots $, one has
\begin{equation} \label{digamma-series}
    \psi(z) = -\gamma+\sum_{t=1}^{\infty}\left(\frac{1}{t}-\frac{1}{t+z}\right),
\end{equation}
where $\gamma$ is known as the Euler constant.

\begin{lemma} \label{cn-calculation}
    The optimization of the constant in \eqref{m-upcn-sum-wc} when $ m=2$ is divided into three cases:
   \[
       \zeta_{n,2} =
       \begin{cases}
           \displaystyle \frac{(2k)!^3}{(3k)!k!^3}; & n =3k \\
           \displaystyle \frac{(2k+1)^2}{(k+1)(3k+1)}\frac{(2k)!^3}{(3k)!k!^3}; & n=3k+1 \\
           \displaystyle \frac{2(2k+1)^3}{(k+1)(3k+1)(3k+2)}\frac{(2k)!^3}{(3k)!k!^3}; & n =3k+2.
       \end{cases}
   \]
\end{lemma}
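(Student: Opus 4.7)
The goal is to evaluate
$$\zeta_{n,2} = \max_{\substack{0 \leq \alpha_1, \alpha_2 \leq n \\ \alpha_1+\alpha_2 \geq n}} \frac{\binom{\alpha_1}{N}\binom{\alpha_2}{N}}{\binom{n}{N}}, \qquad N = \alpha_1 + \alpha_2 - n,$$
in closed form; the statement claims that the answer depends only on $n \bmod 3$.

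First I would reduce the two-variable optimization to essentially one variable. The consecutive ratio $\binom{\alpha+1}{N}/\binom{\alpha}{N} = (\alpha+1)/(\alpha+1-N)$ is strictly decreasing in $\alpha$ on $\alpha \geq N$, so $\alpha \mapsto \binom{\alpha}{N}$ is log-concave on that range. For each fixed $N$ (equivalently fixed sum $\alpha_1 + \alpha_2 = n + N$), the product $\binom{\alpha_1}{N}\binom{\alpha_2}{N}$ is therefore maximized by a pair with $|\alpha_1 - \alpha_2| \leq 1$. This reduces the problem to optimizing the symmetric quantity
$$D(\alpha) := d_{n,2}(\alpha, \alpha) = \frac{\alpha!^2 (2n-2\alpha)!}{(2\alpha - n)!\,(n-\alpha)!^2\,n!}$$
and the off-diagonal companion $E(\alpha) := d_{n,2}(\alpha, \alpha+1)$ over their admissible integer ranges $\lceil n/2 \rceil \leq \alpha \leq n$.

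Second, I would perform a discrete ratio analysis. A direct factorial cancellation yields
$$\frac{D(\alpha+1)}{D(\alpha)} = \frac{(\alpha+1)^2 (n-\alpha)}{2(2n-2\alpha-1)(2\alpha-n+1)(2\alpha-n+2)},$$
a rational function that crosses $1$ near the continuous critical value $\alpha = 2n/3$. Solving the discrete inequality $D(\alpha+1)/D(\alpha) \geq 1$ locates the symmetric integer maximizer in each residue class of $n$ modulo $3$, and the analogous ratio for $E(\alpha+1)/E(\alpha)$ disposes of the off-diagonal candidates.

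Third comes the case analysis. For $n = 3k$ the maximizer is $\alpha_1 = \alpha_2 = 2k$ (with $N = k$), and factorial cancellation in $D(2k)$ gives the first formula. For $n = 3k+1$ the maximizer is $\alpha_1 = \alpha_2 = 2k+1$ (with $N = k+1$); expanding $(2k+1)!^2 = (2k+1)^2(2k)!^2$ and $(3k+1)! = (3k+1)(3k)!$ gives the second formula. For $n = 3k+2$ I would first establish the factorial identity $D(2k+1) = E(2k+1)$, showing that the symmetric pair $(2k+1, 2k+1)$ at $N = k$ and the asymmetric pair $(2k+1, 2k+2)$ at $N = k+1$ yield the same value; then expanding either candidate produces the third formula.

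The main obstacle is the case $n = 3k+2$, where the continuous optimum $2n/3 = 2k + 4/3$ sits strictly between two integers without being close to either. Consequently the discrete maximum is attained simultaneously on two distinct near-diagonal pairs. Establishing the coincidence $D(2k+1) = E(2k+1)$ and then verifying, via the two ratio analyses above, that this common value strictly beats every other candidate (in particular $D(2k)$, $D(2k+2)$, and $E(2k+2)$) requires the most careful bookkeeping and is the step most prone to algebraic error.
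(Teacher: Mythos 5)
Your plan is correct and closely parallels the paper's argument: both reduce to (near-)diagonal pairs $(\alpha,\alpha)$ or $(\alpha,\alpha+1)$ by a concavity argument along $\alpha_1+\alpha_2=\mathrm{const}$, both then locate the maximizer near $2n/3$ by studying how the diagonal quantity grows, and both finish with a case split on $n\bmod 3$. The only stylistic difference is that you work entirely discretely, using the log-concavity of $\alpha\mapsto\binom{\alpha}{N}$ and the factorial ratio $D(\alpha+1)/D(\alpha)$, whereas the paper passes to continuous interpolants and differentiates via the digamma function; the two are interchangeable here. One thing you state that the paper leaves implicit is the exact coincidence $d_{n,2}(2k+1,2k+1)=d_{n,2}(2k+1,2k+2)$ when $n=3k+2$ (which follows from $\binom{2k+2}{k+1}=2\binom{2k+1}{k}$ and $\binom{3k+2}{k+1}=2\binom{3k+2}{k}$); the paper instead just evaluates and compares $g(2k+1)$, $g(2k+2)$ and $f_{4k+3}(2k+2)$. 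Your observation is a clean way to explain why the discrete maximum is attained twice in that residue class.
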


\begin{proof}
    Recall the definition $\zeta_{n,2}$ for $ m=2$,
    $$
        \zeta_{n,2}= \max d_{n,2} (i,j) = \max \frac{\binom{i}{i+j-n }\binom{j}{i+j-n }}{\binom{n}{i+j-n }},
    $$
    where the maximum is taken over all non-negative integers $i,j \leq n$ and 
    $i+j-n \geq 0.$ Let's fix $i+j=l$ and, without loss of generality, assume that $i\geq j$. 
    We define the function $f_l$ on $[n/2,n]$ as follows:
    \begin{align*}
        f_l(x) 
        &:= \log\left[ \frac{\Gamma(x+1)\Gamma(l-x+1)\Gamma(2n-l+1)}{\Gamma(n+1)\Gamma(l-n+1)\Gamma(n-x+1)\Gamma(n+x-l+1)}\right].
    \end{align*}
    Note that for any integer $i$, $\log d(i,l-i) = f_l(i)$. 
    Then,
    $$
        f'_l(x)= \psi(x+1)-\psi(l-x+1)+\psi(n-x+1)-\psi(n+x-l+1).
    $$
    Using  \eqref{digamma-recursion}, we have
    $$  
        \psi(x+1)-\psi(n+x-l+1)
        =\sum_{t=1}^{l-n} \frac{1}{t+x+n-l} 
    $$
    and 
    $$
        -\psi(l-x+1)+\psi(n-x+1)= -\sum_{t=1}^{l-n} \frac{1}{t-x+n}.
    $$
    Thus,
    $$
        f''_l(x)=-\sum_{t=1}^{n-l} \left( \frac{1}{(t+x+n-l)^2}+\frac{1}{(t-x+n)^2} \right)< 0.
    $$
    Therefore, the function $ f_l $ is concave. Since $f_l(x) = f_l(l-x)$,
    one can deduce that 
    $f_l$ is reached the maximum at $x=l/2$.
    Similarly, we define a function $g$ on $[n/2,n]$,
    $$ 
        g(x)
        :=
        \log\left[ \frac{\Gamma^2(x+1)\Gamma(2n-2x+1)}{\Gamma(n+1)\Gamma(2x-n+1)\Gamma^2(n-x+1)}\right].
    $$
    Note that for any integer $i$, $d_{n,2} (i,i) = g (i)$. Using \eqref{digamma-series}, we have
    \begin{align*}
        g'(x)&=2\psi(x+1)-2\psi(2n-2x+1)-2\psi(2x-n+1)+2\psi(n-x+1)\\
        &= 2\sum_{t\geq 1} \left(\frac{1}{2n-2x+t}-\frac{1}{x+t}+\frac{1}{2x-n+t}-\frac{1}{n-x+t}\right)\\
        &=2(3x-2n)\sum_{t\geq 1} \left(\frac{1}{(2n-2x+t)(x+t)}-\frac{1}{(2x-n+t)(n-x+t)} \right)
        \\
        &= 2(3x-2n)(x-n) \sum_{t\geq 1} \frac{n+2t}{(2n-2x+t)(x+t)(2x-n+t)(n-x+t)}. 
    \end{align*}
    Since the last summation is positive, we have that $g'(x) > 0$ for $x < 2n/3$ and $g'(x) <0$ for $2n/3 < x $. Therefore, the maximum of $g$ reaches at $ x_{\max}= 2n/3$.

\begin{itemize}
    \item If $n=3k$, then$ \underset{n/2 <i<n}{\max} g(i)= g(2k).$ We get
    $$
        \zeta_{3k,2}=\frac{(2k)!^3}{(3k)!k!^3}.
    $$
    \item If $n=3k+1,$ then $ x_{\max} =2k+2/3$. We get
    \begin{align*}
        \zeta_{3k+1,2} &= \max \{g(2k), g(2k+1), f_{4k+1}(2k+1)\} 
        \\
        &= \frac{(2k+1)^2}{(k+1)(3k+1)}\frac{(2k)!^3}{(3k)!k!^3}.
    \end{align*}
    %
    \item If $n=3k+2,$ then $ x_{\max} =2k+4/3$. We get
    \begin{align*}
        \zeta_{3k+2,2} &= \max \{g(2k+1), g(2k+2), f_{4k+3}(2k+2)\} 
        \\
        &= \frac{2(2k+1)^3}{(k+1)(3k+1)(3k+2)}\frac{(2k)!^3}{(3k)!k!^3}. \qedhere
    \end{align*}
\end{itemize}
\end{proof}

\begin{remark}
    Using Stirling's approximation, we have $\zeta_{n,2} \sim \sqrt{\frac{4}{\pi n}} \left( \frac{4}{3} \right)^n$.
\end{remark}

\begin{prop} \label{rn-calculation}
    The optimization of the constant in \eqref{ratiofp} is divided into two cases:
    \[
       r_{n,i} =
       \begin{cases}
           \displaystyle \frac{(2k)!(n-k)!^2}{k!^2(n-2k)!n!}; & i =2k \\
           \displaystyle \frac{(2k+1)(n-2k)}{(k+1)(n-k)} \frac{(2k)!(n-k)!^2}{k!^2(n-2k)!n!}; & i=2k+1.
       \end{cases}
   \]
\end{prop}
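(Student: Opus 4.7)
The plan is to fix $i$ and maximize $d_{n,2}(i,j)$ over the integers $j$ with $j \geq n-i$ (so that $N := i+j-n \geq 0$) and $j \leq n$. From the definition,
\[
    d_{n,2}(i,j) = \frac{i!\,j!\,(2n-i-j)!}{(i+j-n)!\,(n-j)!\,(n-i)!\,n!},
\]
so following the method of Lemma \ref{cn-calculation} I extend this to a smooth function of $j\in[n-i,n]$ by replacing the factorials with Gamma functions, and set $F(j):=\log d_{n,2}(i,j)$.

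Differentiating and grouping the digamma terms yields
\[
    F'(j) = \big[\psi(j+1)-\psi(i+j-n+1)\big] - \big[\psi(2n-i-j+1)-\psi(n-j+1)\big].
\]
Telescoping each bracket via \eqref{digamma-recursion} gives $\sum_{s=1}^{n-i}\frac{1}{i+j-n+s}$ and $\sum_{s=1}^{n-i}\frac{1}{n-j+s}$ respectively, and combining into a single sum I obtain
\[
    F'(j) = (2n-i-2j)\sum_{s=1}^{n-i}\frac{1}{(i+j-n+s)(n-j+s)}.
\]
Since the sum is strictly positive on the interior of the feasible interval, $F$ is strictly increasing on $(n-i,\,n-i/2)$ and strictly decreasing on $(n-i/2,\,n)$. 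The unique continuous maximizer is therefore $j^{*}=n-i/2$, which lies in $[n-i,n]$.

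The factorial formula makes the symmetry $d_{n,2}(i,j)=d_{n,2}(i,2n-i-j)$ manifest, so $F$ is symmetric about $j^{*}$. When $i=2k$, the point $j^{*}=n-k$ is already an integer, giving $N=k$. When $i=2k+1$, the integers $j=n-k-1$ and $j=n-k$ bracket $j^{*}=n-k-\tfrac12$ symmetrically and hence produce the same value of $F$; I evaluate at $j=n-k-1$, again yielding $N=k$. Substituting into
\[
    d_{n,2}(i,j)=\frac{\binom{i}{N}\binom{j}{N}}{\binom{n}{N}}
\]
with $N=k$ immediately recovers the even-case expression $\binom{2k}{k}\binom{n-k}{k}/\binom{n}{k}$, which rearranges to the first line of the proposition. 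For the odd case, one uses the identities $\binom{2k+1}{k}=\tfrac{2k+1}{k+1}\binom{2k}{k}$ and $\binom{n-k-1}{k}=\tfrac{n-2k}{n-k}\binom{n-k}{k}$ to factor out the ratio $\tfrac{(2k+1)(n-2k)}{(k+1)(n-k)}$ relative to the even formula.

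The only technical step is the factorization of $F'(j)$ as $(2n-i-2j)$ times a positive sum; this is an entirely routine bookkeeping exercise that mirrors the corresponding step in the proof of Lemma \ref{cn-calculation} and constitutes the only real content of the argument, the remainder being substitution and simplification.
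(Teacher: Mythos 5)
Your proposal is correct and follows essentially the same route as the paper: extend $d_{n,2}(i,j)$ via Gamma functions, take the logarithmic derivative, use digamma identities to locate the critical point $j^*=n-i/2$, and evaluate at the adjacent integers. The only cosmetic difference is that you telescope via the recursion \eqref{digamma-recursion} to obtain a finite sum (and invoke the symmetry $d_{n,2}(i,j)=d_{n,2}(i,2n-i-j)$ in the odd case), whereas the paper expands $g'$ into the infinite series \eqref{digamma-series}; both yield the same factorization with the factor $(2n-i-2j)$ multiplying a manifestly positive sum.
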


\begin{proof}
    Recall that,
    \[
        r_{n,i} = \max_j d_{n,2}(i,j) = \max \frac{\binom{i}{i+j-n} \binom{j}{i+j-n }}{\binom{n}{i+j-n }},
    \]
    where the maximum is taken over all non-negative integers $j \leq n$ such that $i+j \geq n+1$. We define the function $g$ on $[n-i,n]$ as follow: 
    \[
        g(x) = \log \left[ 
            \frac{\Gamma(x+1) \Gamma ( i+1) \Gamma(2n-i-x+1)}{\Gamma (n-x+1) \Gamma (n-i+1)\Gamma(i+x-n+1) \Gamma (n+1)}
        \right].
    \]
    Note that for any integer $j$, $\log d_{n,2}(i,j) = g(j)$. By using \eqref{digamma-series}, $g'(x)$ can be expanded as 
    \[
        (2x-2n+i)(i-n)\sum_{t \geq 1} \frac{2t+n}{(2n-i-x+t)(x+t) (n-x+t)(i+x-n+t)}.
    \]
    Since the summation is positive, we have $g'(x) <0$ for $x > n-i/2$ and $g'(x) > 0 $ for $ x < n-i/2$. So, $g$ reaches the maximum at $ n-i/2 $.
    \begin{itemize}
        \item If $ i =2k$, then
        \[
            r_{n,2k} = \frac{(2k)!(n-k)!^2}{k!^2(n-2k)!n!}.
        \]
        \item If $ i = 2k+1$, then
        \begin{align*}
            r_{n,2k+1} &= \max \{ g(n-k-1), g(n-k) \}
            \\
            &= \frac{(2k+1)(n-2k)}{(k+1)(n-k)} \frac{(2k)!(n-k)!^2}{k!^2(n-2k)!n!}. \qedhere
        \end{align*}
    \end{itemize}
\end{proof}

\begin{prop} \label{Plun-Ruz-lp-cal}
 Let $b(n,p)$ be the     constant from Theorem \ref{m-plun-ruz-lp},  then 
    \[
        \frac{2}{e\sqrt{\pi n}} \left( \frac{4}{3}\right)^n \leq b(n,p) \leq \frac{2^{n+1/2}}{\sqrt{\pi n}} .
    \]
\end{prop}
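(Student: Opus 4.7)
The plan is to set $u = 1/q \in [0, 1]$ and study the Gamma factor
\[
F(u) := \frac{\Gamma(1+Nu)\Gamma(1+nu)}{\Gamma(1+iu)\Gamma(1+ju)},
\]
where $N = i+j-n \geq 0$; without loss of generality, $N \leq i \leq j \leq n$. The central claim I would establish is that $F$ is nondecreasing on $[0,1]$. To this end I compute
\[
\frac{d}{du}\log F(u) = g_u(N) + g_u(n) - g_u(i) - g_u(j), \qquad g_u(x) := x\psi(1+xu),
\]
and use the series for $\psi'$ and $\psi''$ to derive the telescoping identity
\[
g_u''(x) = 2u \sum_{m \geq 0}\frac{m+1}{(m+1+xu)^3} \geq 0,
\]
so that $g_u$ is convex in $x$. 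Since the pair $(n, N)$ majorizes $(j, i)$ (both pairs sum to $i+j$ and $n \geq j$), Schur's inequality yields $(\log F)'(u)\geq 0$. Combined with $F(0) = 1$, this shows $F(u) \geq 1$ on all of $[0,1]$.

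For the upper bound, monotonicity gives $F(u)\leq F(1) = N!\,n!/(i!\,j!)$, and a short direct calculation shows
\[
d_{n,2}(i,j)\cdot F(1) = \binom{2n-i-j}{n-j}.
\]
After substituting $a = n-i$, $b = n-j$ with $a, b \geq 0$ and $a+b \leq n$, this becomes $\binom{a+b}{b}$, maximized at $a+b=n$, $b = \lfloor n/2\rfloor$, yielding $\binom{n}{\lfloor n/2\rfloor}$. Combined with the standard estimate $\binom{n}{\lfloor n/2 \rfloor} \leq 2^{n+1/2}/\sqrt{\pi n}$ (verified separately for even and odd $n$ from $\binom{2m}{m}\leq 4^m/\sqrt{\pi m}$), this gives the required upper bound.

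For the lower bound, since $F(u) \geq 1$ we have $b(n,p) \geq d_{n,2}(i,j)$ for every feasible pair, and in particular $b(n,p) \geq \zeta_{n,2}$. Using the closed-form expressions from Lemma \ref{cn-calculation} together with Stirling's formula with explicit error $m! = \sqrt{2\pi m}(m/e)^m e^{r(m)}$, $0 < r(m) < 1/(12m)$, I would bound $\zeta_{n,2}$ below by $\frac{2}{e\sqrt{\pi n}}(4/3)^n$; the factor $1/e$ comfortably absorbs the small error factors $e^{r(3k) - r(k) - r(2k)}$ (and their counterparts for $n \equiv 1, 2 \pmod 3$), which are checked case by case for $k \geq 1$.

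The main obstacle I expect is establishing the monotonicity of $F$: the convexity of $g_u$ in $x$ is not immediate, since $g_u''$ is the difference of a positive term $2u\psi'(1+xu)$ and a negative term $xu^2\psi''(1+xu)$, and the sign becomes apparent only after combining the polygamma series into the clean telescoping form above. Correctly identifying the majorization $(n,N)\succ(j,i)$ is the other half of this step. Once these ingredients are in place, the remainder of the proof is routine manipulation of binomial coefficients and Stirling-type bounds.
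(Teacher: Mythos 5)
Your proof is correct, and it takes a genuinely different route in the crucial monotonicity step while reaching the same endpoints. The paper differentiates $f(q)=\log\bigl[\Gamma(1+k/q)\Gamma(1+n/q)/\Gamma(1+i/q)\Gamma(1+j/q)\bigr]$ directly in $q$, pairs terms in the resulting digamma series into the telescoped form $(n-i)\sum_{t\geq 1}\bigl(\tfrac{t}{(qt+n)(qt+i)}-\tfrac{t}{(qt+j)(qt+k)}\bigr)$, and concludes $f'(q)\leq 0$ from a term-by-term comparison (which in turn rests on the algebraic identity $(qt+n)(qt+i)-(qt+j)(qt+k)=(n-j)(2qt+i+j)\geq 0$, though the paper leaves this implicit). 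You instead work in the variable $u=1/q$, reduce the sign of $(\log F)'(u)$ to the convexity of $g_u(x)=x\psi(1+xu)$, and invoke Schur-convexity together with the majorization $(n,N)\succ(j,i)$ (both pairs sum to $i+j$ with $n\geq j$). Your route is more conceptual and modular: the convexity of $g_u$, once the polygamma series is collapsed to $g_u''(x)=2u\sum_{m\geq 0}(m+1)/(m+1+xu)^3\geq 0$, cleanly explains \emph{why} the Gamma factor is monotone, whereas the paper's pairing is a specific computation tailored to the four-term difference. Your extraction of the $u=1$ endpoint as $d_{n,2}(i,j)\cdot F(1)=\binom{2n-i-j}{n-j}$ and the substitution $a=n-i$, $b=n-j$ reduce the paper's optimization of $g_l(x)=\log\bigl[\Gamma(1+2n-l)/\Gamma(1+n-l+x)\Gamma(1+n-x)\bigr]$ to the elementary observation that $\binom{a+b}{b}$ is maximized at $a+b=n$, $b=\lfloor n/2\rfloor$; this is an equivalent but more transparent bookkeeping. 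For the lower bound the paper cites an external estimate from \cite{FMZ-24}, while you appeal to Lemma~\ref{cn-calculation} and quantitative Stirling---both are sound, and your variant has the minor advantage of being self-contained within the paper. The one place you hand-wave is the claim that the $1/e$ safety factor absorbs the Stirling error for all $n$: the error term in the ratio $(2k)!^3/((3k)!k!^3)$ versus its Stirling estimate is $e^{3r(2k)-r(3k)-3r(k)}\geq e^{-10/(36k)}$, which is far above $1/e$ for every $k\geq 1$, and the residue classes $n\equiv 1,2\pmod 3$ and small $n$ are easily checked; so the gap closes, but it should be stated explicitly.
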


\begin{proof}
    Recall that
    \[
        b(n,p) = \underset{0\leq i,j \leq n}{\max }\,
        \left(d_{n,2} (i,j) \frac{\Gamma\left(1+\frac{i+j-n}{q}\right)\Gamma\left(1+\frac{n}{q}\right)}{\Gamma\left(1+\frac{i}{q}\right)\Gamma\left(1+\frac{j}{q}\right)}\right).
    \]
    Fix $n,i,j$ such that $k:= i+j-n \geq 0$, and  define a function $f:[1,\infty) \to \R$ such that
    \[
        f(q) = \log \left[ \frac{\Gamma\left(1+\frac{k}{q}\right)\Gamma\left(1+\frac{n}{q}\right)}{\Gamma\left(1+\frac{i}{q}\right)\Gamma\left(1+\frac{j}{q}\right)} \right].
    \]
    Then,
    \[
        f'(q) = -\frac{k}{q^2}\psi \left( 1+ \frac{k}{q} \right) -\frac{n}{q^2}\psi \left( 1+ \frac{n}{q} \right) +\frac{i}{q^2}\psi \left( 1+ \frac{i}{q} \right) +\frac{j}{q^2}\psi \left( 1+ \frac{j}{q} \right).
    \]    
    Using \eqref{digamma-series}, we have
    \[
    f'(q) = (n-i)\sum_{t\geq 1} \left( \frac{t}{(qt+n)(qt+i)} - \frac{t}{(qt+j)(qt+k)} \right).
    \]
    Then, $f$ is decreasing on $[1,\infty)$ as a function of $q$, which implies that $b(n,p)$ is increasing as $p$ increase. We have
    \[
      \underset{0\leq i,j \leq n}{\max }\,
        d_{n,2} (i,j) = b(n,1) \leq b(n,p) \leq b(n,\infty) = \underset{0\leq i,j \leq n}{\max }\,
        \frac{(2n-i-j)!}{(n-i)!(n-j)!}.
    \]
    It was provided in \cite{FMZ-24} by using Stirling approximation that
    \[
        \underset{0\leq i,j \leq n}{\max } d_{n,2} (i,j) \geq \frac{2}{e\sqrt{\pi n}}\left(\frac{4}{3}\right)^n.
    \]
    Fix $l = i+j$, we define function $g_l$ by
    \[
        g_l(x) = \log \left[ \frac{\Gamma(1+2n-l)}{\Gamma(1+n-l+x)\Gamma(1+n-x)} \right].
    \]
    Taking the derivative and using \eqref{digamma-series}, we get
    \[
        g_l'(x) =\sum_{t \geq 0} \frac{l-2x}{(n-l+x+t)(t+n-x)}.
    \]
    The function $g$ reaches the maximum when $ x = l/2$. It implies that 
    \[
        \underset{0\leq i,j \leq n}{\max }\,
        \frac{(2n-i-j)!}{(n-i)!(n-j)!} \leq \underset{0\leq i,j \leq n}{\max }\,
        \frac{(2n-2i)!}{((n-i)!)^2} \leq \binom{n}{n/2}.
    \]
    By using the Stirling's approximation, we have 
    \[
        \binom{n}{n/2} \leq \frac{2^{n+1/2}}{\sqrt{\pi n}}. \qedhere
    \]
\end{proof}

\begin{prop} \label{kappa-cal} Let  $\kappa_{n,q}$ be the constant from  Lemma \ref{RS-l-anti}, then
    \[
        2^n \leq \kappa_{n,q} \leq \binom{2n}{n}.
    \]
\end{prop}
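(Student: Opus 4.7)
The plan is to deduce both bounds from a single monotonicity property of the interpolated binomial coefficient
\[
F(s) := \binom{sn}{si} = \frac{\Gamma(sn+1)}{\Gamma(si+1)\,\Gamma(s(n-i)+1)}, \qquad s \ge 0,
\]
which satisfies $F(0) = 1$ and $F(1) = \binom{n}{i}$. The key claim is that $F$ is non-decreasing on $[0,\infty)$; applied at $s = 1/q \in (0,1]$, this yields the double inequality
\[
1 \;\le\; \binom{n/q}{i/q} \;\le\; \binom{n}{i}, \qquad 0 \le i \le n,\ q \ge 1.
\]
Granting this sandwich, both bounds in the proposition follow by termwise comparison in the defining sum of $\kappa_{n,q}$: the left inequality gives $\binom{n/q}{i/q}^{-1} \le 1$, whence $\kappa_{n,q} \le \sum_{i=0}^n \binom{n}{i}^2 = \binom{2n}{n}$ by Vandermonde's identity, while the right inequality gives $\binom{n/q}{i/q}^{-1} \ge \binom{n}{i}^{-1}$, whence $\kappa_{n,q} \ge \sum_{i=0}^n \binom{n}{i} = 2^n$.

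To prove that $F$ is non-decreasing, I would take the logarithmic derivative,
\[
(\log F)'(s) = n\psi(sn+1) - i\psi(si+1) - (n-i)\psi(s(n-i)+1),
\]
and expand each digamma using the series representation \eqref{digamma-series}. The Euler constant and the harmonic terms $1/t$ each cancel because their coefficient is $n - i - (n-i) = 0$, leaving the clean expression
\[
(\log F)'(s) = \sum_{t=1}^\infty \left( \frac{i}{t+si} + \frac{n-i}{t+s(n-i)} - \frac{n}{t+sn}\right).
\]
Fix $t > 0$ and $s \ge 0$. A direct second-derivative computation gives $\alpha''(y) = -2st/(t+sy)^3 \le 0$ for $\alpha(y) := y/(t+sy)$, so $\alpha$ is concave on $[0,\infty)$; combined with $\alpha(0) = 0$ this makes $\alpha$ subadditive, i.e.\ $\alpha(y_1) + \alpha(y_2) \ge \alpha(y_1 + y_2)$ whenever $y_1, y_2 \ge 0$. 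Applying this with $y_1 = i$ and $y_2 = n-i$ shows that every term of the series above is non-negative, hence $(\log F)'(s) \ge 0$ and $F$ is non-decreasing.

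The main obstacle I anticipate is recognizing that both bounds are controlled by the same underlying monotonicity rather than by two unrelated comparisons. The remark preceding the proposition recovers the upper bound $\binom{2n}{n}$ via Rogers--Shephard together with $K \oplus_p (-K) \subset K - K$, but provides no route to the lower bound $2^n$. Conversely, the natural attempt to compare the Beta integrals $B(si+1,\,s(n-i)+1)$ and $B(i+1,\,n-i+1)$ via H\"older's inequality produces the estimate in the wrong direction. The identification of the subadditivity of the elementary rational function $y \mapsto y/(t+sy)$, through the digamma series, is what makes both bounds simultaneously accessible and avoids invoking any of the geometric machinery developed earlier in the paper.
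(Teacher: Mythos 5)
Your proof is correct and follows essentially the same route as the paper: both establish monotonicity of the interpolated binomial coefficient $\binom{n/q}{i/q}$ in the $L_p$ parameter via the digamma series and then compare with the endpoint values at $q=1$ (giving $\sum_i\binom{n}{i}=2^n$) and $q=\infty$ (giving $\sum_i\binom{n}{i}^2=\binom{2n}{n}$). The only cosmetic difference is in the final positivity step, where the paper algebraically combines the summands into a manifestly negative expression $-\frac{i(n-i)}{q}\sum_t\bigl(\frac{1}{(tq+i)(tq+n)}+\frac{1}{(tq+n-i)(tq+n)}\bigr)$, while you invoke subadditivity of $y\mapsto y/(t+sy)$; these arguments are interchangeable.
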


\begin{proof}
    Recall that,
    \[
        \kappa_{n,q} = \sum_{i=0}^n \binom{n/q}{i/q}^{-1}  \binom{n}{i}^2.
    \]
    Fix $n$ and $i$, we define a function $f$ on $[1,\infty)$ as follows:
    \[
        f(q) = \log \binom{n/q}{i/q} 
        = \log \left[\frac{\Gamma \left(\frac{n}{q}+1 \right) 
        }{\Gamma \left(\frac{i}{q}+1 \right) \Gamma \left(\frac{n-i}{q}+1 \right)  } \right].
    \]
    Then,
    \[
        f'(q) = -\frac{n}{q^2}\psi \left(\frac{n}{q} +1\right) + \frac{i}{q^2}\psi \left(\frac{i}{q} +1\right) +\frac{n-i}{q^2}\psi \left(\frac{n-i}{q} +1\right).
    \]
    Using \eqref{digamma-series}, we have
    \[
        f'(q) = -\frac{i(n-i)}{q} \sum_{t \geq 1} \left(\frac{1}{(tq+i)(tq+n)} + \frac{1}{(tq+n-i)(tq+n)} \right).
    \]
    Then, $f$ is decreasing on $[1,\infty]$ which implies that $\kappa_{n,q}$ is increasing as $q$ increase. Hence,
    \[
        2^n = \kappa_{n,1} \leq \kappa_{n,q} \leq \kappa_{n,\infty} = \binom{2n}{n}. \qedhere
    \]
\end{proof} 


\end{document}